\theoremstyle{plain}
\newtheorem{thm}{Theorem}[section]
\newtheorem{rk}[thm]{Remark}
\newtheorem{prop}[thm]{Proposition}
\newtheorem{clly}[thm]{Corollary}
\newtheorem{lemma}[thm]{Lemma}
\newtheorem{defi}[thm]{Definition}
\newtheorem{example}[thm]{Example}
\title[$2$-Riemannian manifolds]{$2$-Riemannian manifolds}
\author{C. Morales}
\address{Instituto de Matem\'atica\\ Universidade Federal do Rio de Janeiro, 
P. O. Box 68530, 21945-970 Rio de Janeiro, Brazil.} 
\email{morales@impa.br}
\author{M. Vilches}
\address{Departamento de An\'alise Matem\'atica\\
IME, Universidade do Estado do Rio de Janeiro, 20550-013,
Rio de Janeiro, Brazil}
\email{mavil@impa.br}
\thanks{2010 MSC: Primary 53C05, Secondary 46C50} 
\thanks{{\em Keywords and Phrases}: 
$2$-Riemannian metric, 2-inner Product, Stationary Vector field} 
\thanks{CM was partially supported by CNPq, FAPERJ and 
PRONEX/DYN-SYS. from Brazil.}
\begin{document}
\maketitle
\begin{abstract}
A {\em $2$-Riemannian manifold} is a differentiable manifold exhibiting a $2$-inner product on each tangent space.
We first study lower dimensional $2$-Riemannian manifolds by giving necessary and sufficient conditions for flatness. Afterward we 
associate to each $2$-Riemannian manifold a unique torsion free compatible pseudoconnection. Using it we define a curvature for $2$-Riemannian manifolds and study its properties.
We also prove
that $2$-Riemannian pseudoconnections do not have Koszul derivatives.
Moreover, we define stationary vector field with respect to a $2$-Riemannian metric
and prove that the stationary vector fields in $\mathbb{R}^2$ with respect to the $2$-Riemannian metric induced by the Euclidean product are the divergence free ones.
\end{abstract}

%\noindent
%\tableofcontents

\section{Introduction}

\noindent
In his famous 1854's {Habilitationsvortrag} {\em "Uber die Hypothesen, welche der Geometrie zu Grunde liegen"} Bernhard Riemann gave the foundations of the {\em Riemannian geometry} 
based on the choice of an inner product on each tangent space.
Afterward Finsler in his thesis \cite{f} developed a geometry,
the {\em Finsler geometry},
in which a general norm is chosen instead.
In 1933 Cartan \cite{ca} considered a geometry
based on the notion of area, the {\em Cartan spaces},
which are the dual of the Finsler spaces under the Legendre transformation \cite{m}.
Around 1950 Kawaguchi generalized both Finsler and Cartan
by introducing the {\em Areal Spaces} where
the $m$-dimensional area is given by a fundamental integral \cite{ka}.
More recently, Miron defined the {\em Hamilton spaces}
as a natural generalization of the Cartan spaces \cite{m2}.

In 1964 Gaehler introduced the concept of $2$-norm as
an abstraction of the area of the parallelogram formed by two vectors in a vector space. $2$-normed counterpart of well known results in the theory of normed spaces have been obtained
elsewhere \cite{g}.
Kawaguchi pointed out the equivalence between
a $2$-norm space and what he called flat areal space, i.e.,
a linear areal
space where the $2$-dimensional area does not depend on the base point
(see \cite{k2} p. 166).
In 1973 Diminnie, Gaehler and White \cite{dgw}
introduced the $2$-inner product spaces
which are $2$-dimensional analogue of inner product spaces.
The corresponding theory studying the relation between
inner, $2$-inner and $2$-normed spaces have been developed
elsewhere (e.g. \cite{clk}).

These works motivate the study of
{\em $2$-Riemannian manifolds}, that is,
differentiable manifolds
exhibiting a $2$-inner product on each tangent space
($n$-Riemannian manifolds may be defined analogously
using $n$-inner products \cite{mi} instead).

We first study $2$-Riemannian manifolds of dimension $2$ and $3$
by giving necessary and sufficient conditions
for locally flatness.
Afterward we 
associate to each $2$-Riemannian manifold a unique torsion free compatible pseudoconnection. Using it we
define a curvature for $2$-Riemannian manifolds and study its properties. We prove
that a $2$-Riemannian pseudoconnection does not have Koszul derivatives in the sense of \cite{k}.
We also define stationary vector field with respect to a $2$-Riemannian metric
based on Definition 3.1.3 in \cite{k}.
It is proved that the stationary vector fields in $\mathbb{R}^2$ with respect to the $2$-Riemannian metric induced by the standard Euclidean metric are precisely the divergence free ones.

\section*{Acknowledgements}

\noindent
The authors are grateful to professors Ch. Diminnie and
S. Hayashi for sending us the references \cite{dgw}, \cite{dgw2} and \cite{k2} respectively.

\section{$2$-Riemannian metrics}

\noindent
In this section we define $2$-Riemannian metrics on a differentiable manifold.
Previously we shall recall the definition of $2$-inner
product spaces \cite{dgw}.

\begin{defi}
\label{2-inner}
A {\em $2$-inner product} in a vector space $V$ is a map $g: V\times V\times V\to \mathbb{R}$ which
satisfies the following properties for all $u,u',v,w\in V$ and $\alpha\in \mathbb{R}$:
\begin{enumerate}
 \item
$g(u,u/v)\geq 0$ and $g(u,u/v)=0$ if and only if $u$ and $v$ are linearly dependent;
\item
$g(u,u/v)=g(v,v/u)$;
\item
$g(u,v/w)=g(v,u/w)$;
\item
$g(\alpha u+u',v/w)=\alpha g(u,v/w)+g(u',v/w)$.
\end{enumerate}
\end{defi}

{\em (Here we have used the customary notation $g(u,v/w)$ instead
of $g(u,v,w)$}.)

Let us quote some basic properties of $2$-inner products
on vector spaces (see for instance \cite{dgw} and Theorem 2 p. 271 in \cite{dgw2}).

\begin{lemma}
 \label{sl-dgw}
Let $g$ be a $2$-inner product in a vector space $V$. Then
\begin{enumerate}
\item
$g(e_1,e_2/e_1)=0$, $g(e_1,e_1/e_2)=-g(e_1,e_2/e_1+e_2)$, $g(e_1,e_2/\alpha_3e_3)=\alpha_3^2g(e_1,e_2/e_3)$;
\item
$
g\left( \sum_{i=1}^2\alpha_ie_i,\sum_{i=1}^2\beta_ie_i\bigg/\sum_{i=1}^2\gamma_ie_i\right)=
\det
\left(
\begin{matrix}
\alpha_1 &\gamma_1\\
\alpha_2 & \gamma_2
\end{matrix}
\right)
\det
\left(
\begin{matrix}
\beta_1 &\gamma_1\\
\beta_2 &\gamma_2
\end{matrix}
\right)
\cdot$
$$
g(e_1,e_1/e_2);
$$
\item
$
g\left(\sum_{i=1}^3\alpha_ie_i,\sum_{i=1}^3\beta_ie_i\bigg/\sum_{i=1}^3\gamma_ie_i\right)
=\frac{1}{2}\sum_{i,j=1,i\neq j}^3
\det
\left(
\begin{matrix}
\alpha_i &\gamma_i\\
\alpha_j & \gamma_j
\end{matrix}
\right)
\cdot
$
$$
\det
\left(
\begin{matrix}
\beta_i &\gamma_i\\
\beta_j & \gamma_j
\end{matrix}
\right)\cdot
g(e_i,e_i/e_j)+
\sum_{i,j,k=1,i\neq j\neq k\neq i}^3
\det
\left(
\begin{matrix}
\alpha_i &\gamma_i\\
\alpha_k & \gamma_k
\end{matrix}
\right)
\det
\left(
\begin{matrix}
\beta_j &\gamma_j\\
\beta_k & \gamma_k
\end{matrix}
\right)\cdot
$$
$$
g(e_i,e_j/e_k),
$$
\end{enumerate}
for all $e_1,e_2,e_3\in V$, $\alpha_i,\beta_i,\gamma_i\in \mathbb{R}$ ($i=1,2,3$).
\end{lemma}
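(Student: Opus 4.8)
\medskip
\noindent
The plan is to first unpack the linear structure hidden in the axioms and then reduce everything to the behaviour of $g$ on a basis. Taking $\alpha=-1$ and $u'=u$ in axiom (4) gives $g(0,v/w)=0$, after which axiom (4) shows that for fixed $w$ the map $B_w\colon(u,v)\mapsto g(u,v/w)$ is linear in $u$; by axiom (3) it is symmetric, hence bilinear, and by axiom (1) it is positive semidefinite. Since $w$ and $w$ are linearly dependent, $B_w(w,w)=g(w,w/w)=0$, and a positive semidefinite symmetric bilinear form that annihilates a vector annihilates it against everything (Cauchy--Schwarz); therefore
\[
g(x,y/x)=0\qquad\text{for all }x,y\in V.
\]
Moreover, polarising $B_w$ and then using axiom (2) in the form $g(z,z/w)=g(w,w/z)=\sum_{k,l}\gamma_k\gamma_l\,g(e_k,e_l/z)$ (with $w=\sum_k\gamma_k e_k$), one sees that for fixed $u,v$ the quantity $g(u,v/w)$ is a quadratic form in $\gamma_1,\dots,\gamma_n$. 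These three facts are the backbone of everything below.

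\medskip
\noindent
For item (1): the first identity is the special case $x=e_1$, $y=e_2$ of $g(x,y/x)=0$. For the last one, axiom (2) together with bilinearity of $B_u$ gives $g(u,u/\alpha_3 e_3)=g(\alpha_3 e_3,\alpha_3 e_3/u)=\alpha_3^2\,g(e_3,e_3/u)=\alpha_3^2\,g(u,u/e_3)$, and polarising in $(u,v)$ yields $g(e_1,e_2/\alpha_3 e_3)=\alpha_3^2\,g(e_1,e_2/e_3)$. For the middle one, axiom (2) and bilinearity give
\[
g(e_1,e_1/e_1+e_2)=g(e_1+e_2,e_1+e_2/e_1)=g(e_2,e_2/e_1)=g(e_1,e_1/e_2),
\]
while expanding $0=g(e_1,e_1+e_2/e_1+e_2)$ (which vanishes by $g(x,y/x)=0$ and axiom (3)) in its middle slot gives $g(e_1,e_1/e_1+e_2)+g(e_1,e_2/e_1+e_2)=0$; combining the two displays yields $g(e_1,e_1/e_2)=-g(e_1,e_2/e_1+e_2)$.

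\medskip
\noindent
For items (2) and (3): bilinearity of $B_w$ reduces the left-hand side to $\sum_{i,j}\alpha_i\beta_j\,g(e_i,e_j/w)$, so the task is to compute each $g(e_i,e_j/w)$ (symmetric in $i,j$ by axiom (3)) and then regroup the outcome into products of $2\times2$ determinants --- the regrouping being routine determinant algebra I would not dwell on. (The appearance of these determinants is natural, since they are the coefficients of $e_i\wedge w$ and $e_j\wedge w$, and $g(\cdot,\cdot/w)$ factors through those wedges precisely because $g(w,\cdot/w)=0$.) The diagonal terms are immediate from axiom (2): $g(e_i,e_i/w)=g(w,w/e_i)=\sum_{k,l}\gamma_k\gamma_l\,g(e_k,e_l/e_i)$, where every summand with $k=i$ or $l=i$ drops out by item (1). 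For $\dim V=2$ that is the whole computation apart from the single off-diagonal term $g(e_1,e_2/w)$, which comes out of expanding $g(e_1+e_2,e_1+e_2/w)=g(w,w/e_1+e_2)$ with the item (1) identities and solving one linear equation; substituting produces formula (2).

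\medskip
\noindent
For $\dim V=3$ the one genuinely new point --- and, I expect, the only delicate step of the whole lemma --- is the evaluation of $g(e_i,e_j/w)$ with $i\neq j$ when $w\notin\operatorname{span}(e_i,e_j)$: then $\{e_i,e_j,w\}$ is not coplanar and no two-dimensional restriction is available. Here I would exploit that $g(e_i,e_j/w)$ is a quadratic form in $\gamma_1,\gamma_2,\gamma_3$: its coefficient on $\gamma_k\gamma_l$ with $\{k,l\}\subseteq\{i,j\}$ is read off from the two-dimensional restriction to $\operatorname{span}(e_i,e_j)$ (item (2)), its coefficient on $\gamma_l^2$ for the remaining index $l$ equals $g(e_i,e_j/e_l)$, and the two surviving ``three-index'' coefficients are pinned down by the linear system obtained from expanding $g(e_i\pm e_j,\,e_l/\,e_i\pm e_j)=0$ in the first slot, together with the parallelogram identity $g(e_i,e_j/e_i+e_l)+g(e_i,e_j/e_i-e_l)=2g(e_i,e_j/e_i)+2g(e_i,e_j/e_l)$ supplied by the quadratic dependence on $w$. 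Solving this (coupled but small) system and feeding the values back into $\sum_{i,j}\alpha_i\beta_j\,g(e_i,e_j/w)$ yields formula (3) after the same determinant bookkeeping. The main obstacle, then, is precisely this three-dimensional off-diagonal computation; the remainder is axiom-chasing and algebra.
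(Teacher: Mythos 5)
The paper never proves this lemma: it is explicitly quoted from the literature (Diminnie--Gaehler--White, \cite{dgw} and Theorem 2 of \cite{dgw2}), so there is no in-paper argument to compare against. Judged on its own, your proof is correct and self-contained. The backbone --- bilinearity and positive semidefiniteness of $B_w$, the consequence $g(x,y/x)=0$ via Cauchy--Schwarz, and the quadratic dependence of $g(u,v/w)$ on the coordinates of $w$ obtained by polarization and axiom (2) --- is exactly what is needed, and items (1) and (2) follow as you describe. I checked the step you rightly single out as the only delicate one: writing $g(e_i,e_j/\sum_k\gamma_ke_k)=\sum_{k\le l}c^{(i,j)}_{kl}\gamma_k\gamma_l$, the coordinate-plane restrictions and item (1) leave exactly the two mixed coefficients $c^{(i,j)}_{il}$, $c^{(i,j)}_{jl}$ undetermined for each pair, and expanding $g(e_a\pm e_b,e_c/e_a\pm e_b)=0$ in the first slot yields the invertible system $c^{(a,c)}_{ab}+c^{(b,c)}_{ab}=-g(e_b,e_c/e_a)-g(e_a,e_c/e_b)$ and $c^{(b,c)}_{ab}-c^{(a,c)}_{ab}=g(e_b,e_c/e_a)-g(e_a,e_c/e_b)$, whence $c^{(a,c)}_{ab}=-g(e_b,e_c/e_a)$; letting $c$ range over the three indices determines all six unknowns, and these values agree with what formula (3) predicts (for instance the $\gamma_1\gamma_3$-coefficient of $g(e_1,e_2/w)$ comes out as $-g(e_2,e_3/e_1)$ either way). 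Two minor remarks. First, the ``parallelogram identity'' you invoke is already a consequence of the quadratic dependence you established, so it contributes nothing further to the system. Second, the final regrouping into products of $2\times 2$ determinants, which you wave off as routine, is in fact the entire content of the displayed formula in (3); since all nine values $g(e_i,e_j/w)$ are by then explicit, it is only a finite polynomial identity, but a careful write-up should exhibit the match for at least one representative coefficient rather than omit it entirely.
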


\subsection{Definition}
Let $M$ be a differentiable manifold. Denote by
$C^\infty(M)$ the ring of all $C^\infty$ real-valued
functions in $M$.
Let $\Pi:\xi\to M$ be a vector bundle over $M$ with total space $\xi$ and projection $\Pi$
(we write $\xi$ instead of $\Pi:\xi\to M$ for simplicity).
Denote by $\Omega^0(\xi)$ the
$C^\infty(M)$-module of $C^\infty$ cross-sections of $\xi$.
Whenever $\xi=TM$ is the tangent bundle of $M$ we shall write
$\mathcal{X}$ instead of $\Omega^0(TM)$ (or $\mathcal{X}(M)$ to emphasize dependence on $M$).
The fiber of $\xi$ over $p$ is denoted by $\xi_p$.

\vspace{5pt}

A {\em $2$-Riemannian metric} in $\xi$ is
a map $g$ assigning a $2$-inner product $g_p$ to each fiber $\xi_p$, $p\in M$, which is smooth in the following sense:
For every $r,s,t\in \Omega^0(\xi)$
the map $g( r,s/t): M\to \mathbb{R}$ defined by
$$
g(r,s/t)(p)=
g_p(r(p),s(p)/t(p))
$$
belongs to $C^\infty(M)$.
A {\em $2$-Riemannian metric} in $M$ is a $2$-Riemannian metric in its
tangent bundle. A {\em $2$-Riemannian manifold} is a pair $(M,g)$
where $M$ is a manifold and $g$ is a $2$-Riemannian metric in $M$.

\vspace{5pt}

Every $2$-Riemannian metric $g$ in $M$ induces an areal metric \cite{ka} defined by
$$
F(u,v)=\sqrt{g_p( u,u/v)},
\,\,\,\,\,\,\forall p\in M,\forall u,v\in T_pM.
$$
Another related (although different) concept is that of area metric manifold in \cite{psw}.
Indeed, a $2$-Riemannian metric cannot be represented by a tensor
field as in the area metric case.

Every Riemannian metric $h$ in $M$ induces
a $2$-Riemannian metric in $M$ defined by
\begin{equation}
\label{simple}
g_p(u,v/w\rangle=h_p( u,v)\cdot h_p(w,w)-h_p( u,w)\cdot h_p( v,w),
\end{equation}
for all $p\in M$ and $u,v,w\in T_pM$.
This is called the {\em simple} $2$-Riemannian metric generated by $h$. A $2$-Riemannian metric $g$ in $M$ is
simple if it is the simple $2$-Riemannian metric generated by
some Riemannian metric in $M$.

\subsection{The locally flatness problem}
The basic problem in Riemannian geometry is to give necessary and sufficient conditions for a Riemannian manifold to be locally flat.
Such a problem gave rise to the concept of curvature in Riemannian
geometry (see for instance \cite{s}).
In this subsection we want to formulate the analogous problem but
for $2$-Riemannian metrics instead.
For this we use the following definition.

\begin{defi}
A {\em $2$-isometry} between $2$-Riemannian manifolds
$(M,g)$ and $(\overline{M},\overline{g})$ is a diffeomorphism
$h:M\to \overline{M}$ satisfying
$$
\overline{g}_{h(p)}(Dh_p(u),Dh_p(v)/Dh_p(w))=g_p(u,v/w),
\,\,\,\,\forall p\in M, \forall u,v,w\in T_pM.
$$
We say that $(M,g)$ and $(\overline{M},\overline{g})$ are
{\em $2$-isometric} if there is a $2$-isometry between them; and
{\em locally $2$-isometric} if
for every $p\in M$ there are a neighborhood $U$ of $p$ and a neighborhood $\overline{U}$ in $\overline{M}$ such that
$(U,g)$ and $(\overline{U},\overline{g})$ are $2$-isometric.
We say that $M$ is {\em locally flat} if
it is locally $2$-isometric to $(\mathbb{R},g^{st})$, where
$n=dim(M)$ and
$g^{st}$ is
the simple $2$-Riemannian metric of $\mathbb{R}^n$ induced by the standard Euclidean product of $\mathbb{R}^n$.
\end{defi}

The locally flatness problem then consists of giving necessary and sufficient conditions for a $2$-Riemannian manifold $(M,g)$ to be locally flat.
This problem makes sense only in dimension $\geq 3$ by the following result.

\begin{thm}
\label{flat}
Every $2$-Riemannian manifold of dimension $2$ is locally flat.
\end{thm}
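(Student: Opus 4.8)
The plan is to reduce the whole statement to a single scalar equation. Fix $p\in M$ and choose local coordinates $x=(x_1,x_2)$ on a neighborhood $U$ of $p$ with $x(p)=0$, and set
$$
\lambda \;:=\; g\!\left(\tfrac{\partial}{\partial x_1},\tfrac{\partial}{\partial x_1}\Big/\tfrac{\partial}{\partial x_2}\right)\colon U\to\mathbb{R}.
$$
By the smoothness requirement in the definition of a $2$-Riemannian metric, $\lambda$ is $C^\infty$, and by Definition \ref{2-inner}(1) it is strictly positive, because $\tfrac{\partial}{\partial x_1},\tfrac{\partial}{\partial x_2}$ are everywhere linearly independent. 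Lemma \ref{sl-dgw}(2), applied with $e_1=\tfrac{\partial}{\partial x_1}$, $e_2=\tfrac{\partial}{\partial x_2}$, shows that for any $q\in U$ the value of $g_q$ on an arbitrary triple of tangent vectors is the product of two $2\times2$ determinants built from their coordinates, times $\lambda(q)$; in other words, on a surface a $2$-Riemannian metric is encoded by the single function $\lambda$. The same computation on $\mathbb{R}^2$ gives the normalization $g^{st}(e_1,e_1/e_2)=\langle e_1,e_1\rangle\langle e_2,e_2\rangle-\langle e_1,e_2\rangle^2=1$ for the standard basis $e_1,e_2$.

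Next I would note that for a diffeomorphism $h$ of $U$ onto an open set $\overline U\subseteq\mathbb{R}^2$, the map $v\mapsto g^{st}_{h(q)}(Dh_q\,\cdot\,,Dh_q\,\cdot\,/Dh_q\,\cdot\,)$ is again a $2$-inner product on $T_qM$ (the axioms of Definition \ref{2-inner} are preserved under pullback by a linear isomorphism). Hence, by Lemma \ref{sl-dgw}(2) in the $2$-dimensional space $T_qM$, the $2$-isometry identity holds for all triples as soon as it holds for the one triple $\big(\tfrac{\partial}{\partial x_1},\tfrac{\partial}{\partial x_1}/\tfrac{\partial}{\partial x_2}\big)$. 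Writing that single identity out with Lemma \ref{sl-dgw}(2) on the target $\mathbb{R}^2$ and the normalization above reduces the problem to finding a local diffeomorphism $h$ with $(\det Dh)^2=\lambda$, equivalently $\det Dh=\sqrt{\lambda}$.

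To solve this I would exhibit $h$ explicitly. Put $\mu:=\sqrt{\lambda}$, which is $C^\infty$ and positive, and define on a neighborhood of $0$
$$
h(x_1,x_2)=\left(\int_0^{x_1}\mu(t,x_2)\,dt,\ x_2\right).
$$
Then $Dh$ is upper triangular with diagonal entries $\mu$ and $1$, so $\det Dh=\mu$ everywhere; since $\det Dh(p)=\mu(p)>0$, the Inverse Function Theorem makes $h$ a diffeomorphism from a (possibly smaller) neighborhood of $p$ onto an open set $\overline U\subseteq\mathbb{R}^2$. Since $Dh_q\big(\tfrac{\partial}{\partial x_1}\big)=(\mu(q),0)$ and $Dh_q\big(\tfrac{\partial}{\partial x_2}\big)=(\ast,1)$ in the standard basis, Lemma \ref{sl-dgw}(2) gives $g^{st}_{h(q)}\big(Dh_q\tfrac{\partial}{\partial x_1},Dh_q\tfrac{\partial}{\partial x_1}/Dh_q\tfrac{\partial}{\partial x_2}\big)=\mu(q)^2=\lambda(q)$, which matches $g_q\big(\tfrac{\partial}{\partial x_1},\tfrac{\partial}{\partial x_1}/\tfrac{\partial}{\partial x_2}\big)$. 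By the reduction above, $h$ is a $2$-isometry of a neighborhood of $p$ onto $(\overline U,g^{st})$, and since $p$ was arbitrary, $M$ is locally flat.

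I do not expect a genuine obstacle. The substance is the two observations that (i) in dimension $2$ the metric is a single positive scalar, which is immediate from Lemma \ref{sl-dgw}(2), and (ii) the prescribed-Jacobian equation $\det Dh=\mu$ is locally solvable, which in two variables is the elementary partial-integration formula above (in higher dimension the analogue is Moser's lemma and is considerably harder, which is precisely why the locally flatness problem is nontrivial only for $\dim\geq3$). The only points needing care are verifying that pullbacks of $2$-inner products are $2$-inner products, so that Lemma \ref{sl-dgw}(2) legitimately collapses the identity to a single triple, and checking the normalization $g^{st}(e_1,e_1/e_2)=1$.
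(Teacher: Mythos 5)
Your proof is correct and follows essentially the same route as the paper: reduce the $2$-isometry condition, via Lemma \ref{sl-dgw}(2), to the single prescribed-Jacobian equation $\det Dh=\sqrt{\lambda}$ and then solve it locally. The only difference is that where the paper cites a reference for solvability of that equation, you write down the elementary explicit solution $h(x_1,x_2)=\bigl(\int_0^{x_1}\sqrt{\lambda(t,x_2)}\,dt,\ x_2\bigr)$, which makes the argument self-contained.
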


\begin{proof}
Let $M$ be a Riemannian manifold of dimension $2$. Fix $p_0\in M$ and
denote by $(x,y)$ the standard coordinate system of $\mathbb{R}^2$.
Choose a coordinate system $(U,\phi)$ around $p_0$.
Define $G:\phi(U)\to \mathbb{R}$ by
$$
G(q)=g_{\phi(q)}\left(D\phi_q\left(\frac{\partial}{\partial x}\right),
D\phi_q\left(\frac{\partial}{\partial x}\right)\bigg/D\phi_q\left(\frac{\partial}{\partial y}
\right)\right).
$$

Let $(u,v)=(u(x,y),v(x,y))$ be a solution of the PDE below:
$$
\frac{\partial(u,v)}{\partial(x,y)}=\sqrt{G},
$$
where
$$
\frac{\partial(u,v)}{\partial(x,y)}=\frac{\partial u}{\partial x}\frac{\partial v}{\partial y}-\frac{\partial u}{\partial y}\frac{\partial v}{\partial x}
$$
stands for the Jacobian of $(u,v)$ with respect to $(x,y)$.
(Such a solution always exists e.g. \cite{kn}).

Define $h:U\to \mathbb{R}^2$ by
$h=(u,v)\circ \phi^{-1}$. Clearly $h$ is a diffeomorphism onto
$V=h(U)$.
Take $p\in \phi(U)$ and $q=\phi^{-1}(p)$.
The chain rule yields
$$
Dh_{\phi(q)}\left(D\phi_q\left(\frac{\partial}{\partial z}\right)\right)=
D(u,v)_q\left(\frac{\partial}{\partial z}\right)
$$
for $z=x,y$.
By Lemma \ref{sl-dgw}-(2) we obtain
$$
g^{st}_{h(p)}\left(Dh_{p}\left(D\phi_q\left(\frac{\partial}{\partial x}\right)\right),Dh_{p}\left(D\phi_q\left(\frac{\partial}{\partial x}\right)\right)\bigg/Dh_{p}\left(D\phi_q\left(\frac{\partial}{\partial y}\right)\right)\right)=
$$
$$
\left(\frac{\partial(u,v)}{\partial(x,y)}\right)^2=G.
$$
Thus the definition of $G$ above implies
\begin{equation}
\label{2-isometry}
g^{st}_{h(p)}(Dh_{p}(a),Dh_{p}(a)/Dh_{p}(b))
=
g_{p}(a,a/b)
\end{equation}
at least for
$a=D\phi_q\left(\frac{\partial}{\partial x}\right)$ and
$b=D\phi_q\left(\frac{\partial}{\partial y}\right)$.
It follows that (\ref{2-isometry}) holds for all $a,b\in T_pM$ since
$\{D\phi_q\left(\frac{\partial}{\partial x}\right),
D\phi_q\left(\frac{\partial}{\partial y}\right)\}$ is a base of $T_pM$.
As $p\in \phi(U)$ is arbitrary we conclude that $h$ is a $2$-isometry
between $(\phi(U), g)$ and $(V, g^{st})$.
Since $p_0\in M$ is arbitrary we conclude that
$M$ is locally flat.
This proves the result.
\end{proof}

Theorem \ref{flat} is a $2$-dimensional version
of an elementary fact in Riemannian geometry (\cite{lee} p. 116) asserting that
every Riemannian $1$-manifold is locally isometric to $\mathbb{R}$ (or even that
every curve can be parametrized by arc length).
It is also related to the well known existence of isothermal
coordinates on every Riemannian surface \cite{s}.
It can be used as well to prove that
every $2$-Riemannian manifold of dimension $2$ is simple and,
furthermore, that two arbitrary $2$-Riemannian manifolds of dimension $2$ are locally $2$-isometric.

Now we consider the {\em $2$-Riemannian $3$-manifolds}, i.e.,
$2$-Riemannian manifolds of dimension $3$.
In such a case we have the following characterization.

\begin{thm}
\label{thh1}
A $2$-Riemannian $3$-manifold $(M,g)$ is locally flat if and only if
for every $p_0\in M$ there is a coordinate system $(O,\phi)$ around
$p_0$ such that the following system of first order PDE,
$$
\sum_{1\leq \alpha<\beta\leq 3}
\frac{\partial(f^\alpha,f^\beta)}{\partial(x^i,x^k)}\cdot\frac{\partial(f^\alpha,f^\beta)}{\partial(x^j,x^k)}=g_{ijk},
\,\,\,\,i,j,k\in \{1,2,3\}
$$
has a solution $f=(f^1,f^2,f^3):O\to \mathbb{R}^3$ where
$(x^1,x^2,x^3)$ is the standard coordinate system of $\mathbb{R}^3$ and the
$g_{ijk}$'s are the functions defined by
$$
g_{ijk}(q)=g_{\phi(q)}\left(D\phi_q\left(\frac{\partial}{\partial x^i}\right),
D\phi_q\left(\frac{\partial}{\partial x^j}\right)\bigg/D\phi_q\left(\frac{\partial}{\partial x^k}\right)\right).
$$
\end{thm}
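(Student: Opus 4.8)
\noindent\emph{Sketch of the intended argument.} The idea is to recognize the displayed PDE system as a coordinate transcription of the identity (\ref{2-isometry}) defining a local $2$-isometry onto an open subset of $(\mathbb{R}^{3},g^{st})$; once this is done, the two conditions in the statement are literally the same and both implications are immediate.

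Fix $p_{0}$, a parametrization $\phi$ around $p_{0}$ (so $O$ is an open set of $\mathbb{R}^{3}$ and $D\phi_{q}:\mathbb{R}^{3}\to T_{\phi(q)}M$), and for $q\in O$ write $a_{m}=D\phi_{q}(\partial/\partial x^{m})\in T_{\phi(q)}M$ and, for a map $f=(f^{1},f^{2},f^{3}):O\to\mathbb{R}^{3}$, $w_{m}=Df_{q}(\partial/\partial x^{m})\in\mathbb{R}^{3}$. Since $\partial(f^{\alpha},f^{\beta})/\partial(x^{i},x^{k})$ is exactly the $(\alpha,\beta)$-component of the bivector $w_{i}\wedge w_{k}$, the Gram (Lagrange) identity $\langle a\wedge b,c\wedge d\rangle=\langle a,c\rangle\langle b,d\rangle-\langle a,d\rangle\langle b,c\rangle$ in $\Lambda^{2}\mathbb{R}^{3}$ gives, recalling that $g^{st}$ is the simple $2$-metric of the Euclidean product,
$$
\sum_{1\le\alpha<\beta\le 3}\frac{\partial(f^{\alpha},f^{\beta})}{\partial(x^{i},x^{k})}\cdot\frac{\partial(f^{\alpha},f^{\beta})}{\partial(x^{j},x^{k})}=\langle w_{i},w_{j}\rangle\langle w_{k},w_{k}\rangle-\langle w_{i},w_{k}\rangle\langle w_{j},w_{k}\rangle=g^{st}(w_{i},w_{j}/w_{k}).
$$
Hence $f$ solves the system iff $g^{st}(w_{i},w_{j}/w_{k})=g_{ijk}$ for all $i,j,k$, and, since $\{a_{1},a_{2},a_{3}\}$ is a basis of $T_{\phi(q)}M$ (so $\{w_{1},w_{2},w_{3}\}$ is a basis of $\mathbb{R}^{3}$ whenever $Df_{q}$ is invertible), Lemma \ref{sl-dgw}-(3) lets this pass from the coordinate basis to all vectors, exactly as Lemma \ref{sl-dgw}-(2) was used in the proof of Theorem \ref{flat}. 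So, modulo invertibility of $Df_{q}$, ``$f$ solves the system'' is the same as ``$h:=f\circ\phi^{-1}$ satisfies the $2$-isometry identity (\ref{2-isometry})''.

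For ($\Leftarrow$), let $f$ be a solution; I must see that $Df_{q}$ is an isomorphism at each $q$. No two of $w_{1},w_{2},w_{3}$ are proportional, since $g^{st}(w_{i},w_{i}/w_{j})=g_{iij}=g_{\phi(q)}(a_{i},a_{i}/a_{j})>0$ by positivity of the $2$-inner product and independence of $a_{i},a_{j}$; if the three were dependent, say $w_{3}=\lambda w_{1}+\mu w_{2}$, then Lemma \ref{sl-dgw}-(2) forces $g_{331}g_{332}=g_{123}^{2}$, contradicting the strict Cauchy--Schwarz inequality $g_{123}^{2}=g(a_{1},a_{2}/a_{3})^{2}<g(a_{1},a_{1}/a_{3})\,g(a_{2},a_{2}/a_{3})=g_{331}g_{332}$, which holds because $(u,v)\mapsto g(u,v/a_{3})$ is a positive form on the $3$-dimensional space $T_{\phi(q)}M$, $a_{1},a_{2},a_{3}$ are independent, and $g(a_{m},a_{m}/a_{3})=g(a_{3},a_{3}/a_{m})$ by axiom~(2). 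Thus $f$ is a local diffeomorphism; after shrinking $O$, $h=f\circ\phi^{-1}$ is a diffeomorphism onto an open set of $\mathbb{R}^{3}$, hence by the reformulation a $2$-isometry, and $M$ is locally flat. For ($\Rightarrow$), given a $2$-isometry $h$ from a neighborhood $U$ of $p_{0}$ onto an open set of $\mathbb{R}^{3}$, choose any parametrization $\phi$ around $p_{0}$ with image in $U$ and set $f=h\circ\phi$; it is a diffeomorphism onto an open set, so $Df_{q}$ is invertible, and the chain rule together with the defining identity of $h$ shows, via the reformulation, that $f$ solves the system.

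The computational core (the bivector rewriting) is routine; the one genuinely delicate point is the invertibility of $Df_{q}$ in ($\Leftarrow$), i.e. that the PDE system by itself already forces $f$ to be an immersion. This is the $3$-dimensional counterpart of the fact, used tacitly in the proof of Theorem \ref{flat}, that the solution $(u,v)$ there is automatically a local diffeomorphism because its Jacobian equals $\sqrt{G}>0$; the role of ``$G>0$'' is now played by the positivity axiom together with the strict Cauchy--Schwarz inequality for $2$-inner products recalled above.
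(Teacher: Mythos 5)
Your proof is correct and follows essentially the same route as the paper: rewrite the PDE system as the statement that $h=f\circ\phi^{-1}$ satisfies the $2$-isometry identity on the coordinate frame, and then extend from the frame to arbitrary vectors via Lemma \ref{sl-dgw}-(3). The one place where you go beyond the paper is worth noting: the paper simply asserts ``it turns out that $f$ is a diffeomorphism onto its image'' without justification, whereas you actually prove that any solution of the system is an immersion, by combining the positivity axiom (which gives $g^{st}(w_i,w_i/w_k)=g_{iik}>0$, so no two of the $w_m$ are proportional) with the strict Cauchy--Schwarz inequality for the positive semidefinite form $(u,v)\mapsto g(u,v/a_3)$ (whose radical is exactly $\mathbb{R}a_3$), which rules out a relation $w_3=\lambda w_1+\mu w_2$ via the identity $g_{331}g_{332}=g_{123}^2$ forced by Lemma \ref{sl-dgw}-(2). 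That computation is correct and fills the only genuine gap in the published argument; your explicit identification of the left-hand side of the system with $\langle w_i\wedge w_k, w_j\wedge w_k\rangle$ via the Lagrange identity also makes transparent a step the paper leaves implicit.
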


\begin{proof}
Let us prove the sufficiency.
Consider a locally flat $2$-Riemannian $3$-manifold $(M,g)$ and
pick $p_0\in M$. Then, there is a diffeomorphism
$h:O_0\to O$ from a neighborhood $O_0$ of $p_0$
onto an open set $O\subset \mathbb{R}^3$ such that
\begin{equation}
\label{eqq1}
g^{st}_{h(p)}(Dh_p(u),Dh_p(v)/Dh_p(w))=g_p(u,v/w),
\quad \forall p\in O_0,\forall u,v,w\in T_pM.
\end{equation}

We define $\phi=h^{-1}:O\to O_0$ hence $(O,\phi)$ is a coordinate system
around $p_0$. For all $i,j,k\in \{1,2,3\}$ and all $q\in O$ we have
\begin{equation}
\label{eqq2}
g_{ijk}(q)=
g^{st}_q\left(\frac{\partial}{\partial x^i},\frac{\partial}{\partial x^j}
\bigg/\frac{\partial}{\partial x^k}\right)=\delta_{ijk},
\end{equation}
where the symbol $\delta_{ijk}$ is defined by
$$
\delta_{ijk}=\delta_{ij}-\delta_{ik}\cdot\delta_{jk}.
$$
($\delta_{ij}$ is the Kronecker delta.)
It follows from (\ref{eqq1}) and (\ref{eqq2}) that the system in the statement has the trivial
solution $f=(x^1,x^2,x^3)$.
This proves the only if part of the theorem.

Now we prove the necessity.
Suppose that there is a coordinate system
$(O,\phi)$ around a fixed (but arbitrary) point
$p_0\in M$ so that the aforementioned system has a solution
$f=(f^1,f^2,f^3)$.
It turns out that $f$ is a diffeomorphim onto its image $f(O)$ which is an open subset of $\mathbb{R}^3$.

Define
$h: \phi(O)\to f(O)$ by $h=f\circ \phi^{-1}$.
Noting that
$$
Dh_p\left(D\phi\left(\frac{\partial}{\partial x^i}\right)\right)=
\sum_\alpha \frac{\partial f^\alpha}{\partial x^i} \cdot
\frac{\partial}{\partial x^\alpha}
$$
we get
$$
g^{st}_{h(p)}
\left(Dh\left(D\phi\left(\frac{\partial}{\partial x^i}\right)\right),
Dh\left(D\phi\left(\frac{\partial}{\partial x^j}\right)\right)\bigg/
Dh\left(D\phi\left(\frac{\partial}{\partial x^k}\right)\right)\right)=
$$
$$
g_{ijk}(\phi^{-1}(p))
$$
because of the system in the statement
(we have written $Dh$ instead of $Dh_p$, etc for simplicity).
Hence
$$
g^{st}_{h(p)}
\left(Dh\left(D\phi\left(\frac{\partial}{\partial x^i}\right)\right),
Dh\left(D\phi\left(\frac{\partial}{\partial x^j}\right)\right)\bigg/
Dh\left(D\phi\left(\frac{\partial}{\partial x^k}\right)\right)\right)=
$$
$$
g_p\left(D\phi\left(\frac{\partial}{\partial x^i}\right),D\phi\left(\frac{\partial}{\partial x^j}\right)\bigg/
D\phi\left(\frac{\partial}{\partial x^k}\right)\right)
$$
by the definition of $g_{ijk}$.

Now pick $u,v,w\in T_pM$. Then,
$$
u=\sum\alpha_i D\phi\left(\frac{\partial}{\partial x^i}\right),
\,\,\,\,v=
\sum \beta_iD\phi\left(\frac{\partial}{\partial x^i}\right)
,
\,\,\,\,
w=\sum\gamma_iD\phi\left(\frac{\partial}{\partial x^i}\right)
$$
for some scalars $\alpha_i,\beta_i,\gamma_i$.
Applying Lemma \ref{sl-dgw}-(3) we obtain
$$
g_p(u,v/w)=
\sum_{1\leq i<j\leq3}
\det
\left(
\begin{matrix}
\alpha_i &\gamma_i\\
\alpha_j & \gamma_j
\end{matrix}
\right)
\det
\left(
\begin{matrix}
\beta_i &\gamma_i\\
\beta_j & \gamma_j
\end{matrix}
\right)
\cdot
$$
$$
g^{st}_{h(p)}\left(Dh\left(D\phi\left(\frac{\partial}{\partial x^i}\right)\right),
Dh\left(D\phi\left(\frac{\partial}{\partial x^i}\right)\right)\bigg/
Dh\left(D\phi\left(\frac{\partial}{\partial x^j}\right)\right)\right) +
$$
$$
\sum_{i\neq j\neq k\neq i}
\det
\left(
\begin{matrix}
\alpha_i &\gamma_i\\
\alpha_k & \gamma_k
\end{matrix}
\right)
\det
\left(
\begin{matrix}
\beta_j &\gamma_j\\
\beta_k & \gamma_k
\end{matrix}
\right)
\cdot
$$
$$
g^{st}_{h(p)}\left(Dh\left(D\phi\left(\frac{\partial}{\partial x^i}\right)\right),
Dh\left(D\phi\left(\frac{\partial}{\partial x^j}\right)\right)\bigg/
Dh\left(D\phi\left(\frac{\partial}{\partial x^k}\right)\right)\right)=
$$
$$
\sum_{1\leq i<j\leq3}
\det
\left(
\begin{matrix}
\alpha_i &\gamma_i\\
\alpha_j & \gamma_j
\end{matrix}
\right)
\det
\left(
\begin{matrix}
\beta_i &\gamma_i\\
\beta_j & \gamma_j
\end{matrix}
\right)
\cdot
$$
$$
g_p\left(D\phi\left(\frac{\partial}{\partial x^i}\right),D\phi\left(\frac{\partial}{\partial x^i}\right)\bigg/
D\phi\left(\frac{\partial}{\partial x^j}\right)\right) +
$$
$$
\sum_{i\neq j\neq k\neq i}
\det
\left(
\begin{matrix}
\alpha_i &\gamma_i\\
\alpha_k & \gamma_k
\end{matrix}
\right)
\det
\left(
\begin{matrix}
\beta_j &\gamma_j\\
\beta_k & \gamma_k
\end{matrix}
\right)
\cdot
$$
$$
g_p\left(D\phi\left(\frac{\partial}{\partial x^i}\right),D\phi\left(\frac{\partial}{\partial x^j}\right)\bigg/
D\phi\left(\frac{\partial}{\partial x^k}\right)\right)=
$$
$$
g^{st}_{h(p)}(Dh(u),Dh(v)/Dh(w))
$$
and the result follows.
\end{proof}

We shall apply this result to find non-locally flat $2$-Riemannian metrics in
$\mathbb{R}^3$. For this we need the following lemmas.
Let $I$ be the identity matrix.
Given a differentiable map $f$ we denote by $D^tf(x)$ the transposed of the matrix $Df(x)$.

\begin{lemma}
\label{propp1}
Let $\mu$ be a positive non-constant $C^\infty$ function defined in an open subset $\Omega\subset \mathbb{R}^3$.
If the following PDE
$$
D^tf(x)\cdot Df(x)= J(x,f)^2\cdot \mu(x)\cdot I
$$
has a $C^\infty$ solution $f$ with non-zero Jacobian $J(x,f)$,
then
$$
\mu(x)=\left(\frac{\mid x-a\mid^4}{r^4}\right)^2,
$$ for some $r>0$ and some $a\notin \Omega$.
\end{lemma}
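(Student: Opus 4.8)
The plan is to read the displayed PDE as the statement that $f$ is a conformal immersion and then to invoke the rigidity of conformal maps in dimension $\geq 3$. First I would set $\rho(x):=J(x,f)^2\,\mu(x)$; since $\mu>0$ and $J(x,f)\neq 0$ this is a positive $C^\infty$ function, and the equation becomes $D^tf(x)\cdot Df(x)=\rho(x)\,I$. Hence $Df(x)=\sqrt{\rho(x)}\,O(x)$ for some matrix $O(x)\in O(3)$ depending smoothly on $x$, so $f$ is a conformal (orientation preserving or reversing) immersion, in particular a local diffeomorphism. Taking determinants on both sides of $D^tf\cdot Df=\rho I$ gives $J(x,f)^2=\rho(x)^3$, and comparing this with $\rho=J^2\mu$ yields
$$
\mu(x)=\rho(x)^{-2}.
$$
This reduces the whole problem to identifying the conformal factor $\rho$.

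Next I would invoke the classical theorem of Liouville on conformal mappings, valid for $C^1$ (hence $C^\infty$) conformal maps of domains in $\mathbb{R}^n$ with $n\geq 3$: passing to a connected component we may assume $\Omega$ connected, and then $f$ is the restriction of a M\"obius transformation (in the extended sense, allowing a reflection). Such a transformation is either a similarity $x\mapsto cOx+b$ with $c>0$, $O\in O(3)$, $b\in\mathbb{R}^3$, or, after composition with a single inversion, has the form
$$
f(x)=b+c\,\frac{O(x-a)}{|x-a|^2},\qquad c>0,\ O\in O(3),\ a,b\in\mathbb{R}^3.
$$
In the first case $Df\equiv cO$, so $\rho\equiv c^2$ is constant and then $\mu=\rho^{-2}$ is constant, contradicting the hypothesis that $\mu$ is non-constant. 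Hence only the second case occurs.

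In that case one computes
$$
Df(x)=\frac{c}{|x-a|^2}\,O\left(I-2\,\frac{(x-a)(x-a)^t}{|x-a|^2}\right),
$$
and since $I-2\,(x-a)(x-a)^t/|x-a|^2$ is a Householder reflection, $Df(x)$ is $\frac{c}{|x-a|^2}$ times an orthogonal matrix; therefore $\rho(x)=c^2/|x-a|^4$. Consequently
$$
\mu(x)=\rho(x)^{-2}=\frac{|x-a|^8}{c^4}=\left(\frac{|x-a|^4}{r^4}\right)^2,\qquad r:=\sqrt{c}>0.
$$
Finally, $\mu$ is a finite, strictly positive $C^\infty$ function on $\Omega$, whereas $x\mapsto|x-a|^8/r^8$ vanishes at $x=a$; hence $a\notin\Omega$, which is the remaining assertion.

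I expect the only real obstacle to be the appeal to Liouville's theorem in the form needed here: $f$ is assumed to be a conformal immersion but not a global diffeomorphism, so one must use the rigidity version — each point of $\Omega$ has a neighbourhood on which $f$ coincides with some M\"obius map, and since two M\"obius maps agreeing on a nonempty open set are equal, connectedness of $\Omega$ glues these local maps into a single M\"obius transformation. The remaining steps — extracting $\rho$, the determinant identity $J^2=\rho^3$, and the explicit differentiation of the inversion — are routine computations.
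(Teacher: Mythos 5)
Your proof is correct, but it reaches the conclusion by a different route from the paper. The paper never invokes Liouville's classification of the conformal \emph{maps}; instead it works directly with the conformal \emph{factor}: writing the equation as $D^tf\cdot Df=e^{\lambda}I$, it quotes from Iwaniec--Martin the nonlinear second-order system that $\lambda$ must satisfy, substitutes $P=e^{-\lambda/2}$ to linearize it into $2P\,\partial^2P/\partial x^i\partial x^j=|\nabla P|^2\delta_{ij}$, and quotes the fact that the non-constant solutions are exactly $P(x)=r^{-2}|x-a|^2$; the formula for $\mu$ then drops out of the same determinant identity $J^2=\rho^3$ that you use. So the paper is in effect replaying the conformal-factor half of the Iwaniec--Martin proof of Liouville's theorem, while you invoke the full theorem (classification of the maps as M\"obius transformations) and then differentiate the inversion explicitly to recover $\rho(x)=c^2/|x-a|^4$. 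Your version is a bigger hammer but arguably cleaner to verify, and you are right that the only delicate point is the local-to-global gluing on a connected domain; note that both your argument and the paper's tacitly assume $\Omega$ connected (on a disconnected $\Omega$ the parameters $a,r$ could vary from component to component, and a component on which $\mu$ is constant would escape the stated form), but this does not affect the application to Theorem~\ref{thh2}. Your observation that $a\notin\Omega$ because $\mu$ is strictly positive is a clean way to finish that the paper leaves implicit in the citation.
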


\begin{proof}
The proof uses standard arguments in Geometric Function Theory \cite{im}.

Observe that if $J(x,f)^2\mu(x)$ were constant, then $J(x,f)$ also does
(e.g. replace in the PDE of the statement and take determinant).
So, $\mu(x)$ would be constant which is a contradiction.
Therefore $J(x,f)^2\mu(x)$ is not a constant function.

Next we define
$$
\lambda(x)=\ln(J(x,f)^2\mu(x)).
$$
The hypothesis of the proposition implies that there is a solution of
$$
D^tf(x)\cdot Df(x)=e^{\lambda(x)}I.
$$
From this we get (as in \cite{im} p. 39) that $\lambda$ solves
$$
4\frac{\partial^2\lambda}{\partial x^i\partial x^j}=
2\frac{\partial \lambda}{\partial x^i}\cdot\frac{\partial \lambda}{\partial x^j}
\,\,\,\,\,
-\,\,\,\,\mid \nabla\lambda\mid^2\cdot \delta_{ij},
$$
where $\nabla$ is the gradient operator and $\mid \cdot\mid$ is the norm operation.
Set
$$
P(x)=e^{-\frac{\lambda(x)}{2}}.
$$
It turns out that
$\mid \nabla P\mid^2=\frac{P^2}{4}\mid\nabla \lambda\mid^2$ so
$
\frac{\partial^2 P}{\partial x^i\partial x^j}=
\frac{\mid \nabla P\mid ^2}{2P}\delta_{ij}
$
thus $P$ solves
$$
2P\frac{\partial^2P}{\partial x^i\partial x^j}=
\mid \nabla P\mid^2\delta_{ij}.
$$
Because of this we get
$$
P(x)=r^{-2}\mid x-a\mid^2
$$
for some $r>0$ and some $a\notin \Omega$
(see for instance \cite{im} p. 40).
Then,
$$
e^{-\frac{1}{2}\ln(J(x,f)^2\mu(x))}=
r^{-2}\mid x-a\mid^2
$$
and so
$J(x,f)^{-1}\mu(x)^{-\frac{1}{2}}=r^{-2}\mid x-a\mid^2$,
yielding
\begin{equation}
\label{im1}
J(x,f)^2\mu(x)=\frac{r^4}{\mid x-a\mid^4}.
\end{equation}
Replacing in the PDE of the statement we get
$$
D^tf(x)\cdot Df(x)=\frac{r^4}{\mid x-a\mid^4} I.
$$
By taking determinant in both sides of the above equation we obtain
$$
J(x,f)^2=\left(\frac{r^4}{\mid x-a\mid^4}\right)^3
$$
and replacing in (\ref{im1}) we obtain
$$
\left(\frac{r^4}{\mid x-a\mid^4}\right)^3\mu(x)=\frac{r^4}{\mid x-a\mid^4}
$$
which proves the result.
\end{proof}

\begin{lemma}
\label{propp2}
Let $g_{ijk}$ be $C^\infty$ real valued functions defined in an open subset
$\Omega$ of $\mathbb{R}^3$, $i,j,k\in \{1,2,3\}$.
Suppose that the matrix
$$
G(x)=\left(\begin{matrix}
g_{223}& -g_{123}& -g_{132}\\
-g_{123}& g_{113}& -g_{231}\\
-g_{132}& -g_{231} & g_{112}
\end{matrix}
\right)^{-1}
$$
is well defined for all $x\in \Omega$.
A $C^\infty$ diffeomorphism $f=(f^1,f^2,f^3)$ defined in $\Omega$ is a solution of the PDE
$$
g_{ijk}=
\sum_{1\leq \alpha<\beta\leq 3}
\frac{\partial(f^\alpha,f^\beta)}{\partial(x^i,x^k)}\cdot\frac{\partial(f^\alpha,f^\beta)}{\partial(x^j,x^k)}
$$
if and only if $f$ solves the
following Beltrami-like system
$$
D^tf(x)\cdot Df(x)=J(x,f)^2\cdot G(x).
$$
\end{lemma}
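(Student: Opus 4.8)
The plan is to observe that \emph{both} conditions in the statement are equivalent to the single matrix identity
$$
M(x):=\left(\begin{matrix} g_{223}& -g_{123}& -g_{132}\\ -g_{123}& g_{113}& -g_{231}\\ -g_{132}& -g_{231} & g_{112}\end{matrix}\right)=\operatorname{adj}(D^tf(x)\cdot Df(x)),
$$
where $\operatorname{adj}$ denotes the classical adjugate (the transposed cofactor matrix) and $M(x)$ is the matrix whose inverse defines $G(x)$; the lemma then follows by transitivity of these two equivalences.

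First I would settle the purely linear-algebraic equivalence ``$B=J^2G\Leftrightarrow M=\operatorname{adj}(B)$'', where $A=Df(x)$, $B=D^tf(x)\cdot Df(x)=A^tA$ and $J=J(x,f)=\det A$. Since $f$ is a diffeomorphism, $A$ is invertible, hence so is $B$, and $\det B=(\det A)^2=J^2$; therefore $\operatorname{adj}(B)=\det(B)B^{-1}=J^2B^{-1}$. Because $G=M^{-1}$ is assumed to exist, multiplying $B=J^2M^{-1}$ on the right by $M$ (and using $J^2\neq 0$) shows immediately that $B=J^2G$ is equivalent to $M=J^2B^{-1}=\operatorname{adj}(B)$.

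Next I would prove that the first-order PDE system of the statement is equivalent to $M=\operatorname{adj}(B)$. The conceptual input is the Cauchy--Binet-type identity $\operatorname{adj}(A^tA)=\operatorname{adj}(A)\operatorname{adj}(A^t)=\operatorname{adj}(A)\,\operatorname{adj}(A)^t$, together with the fact that for a $3\times 3$ matrix $A=Df$ each entry of $\operatorname{adj}(A)$ is, up to an explicit sign, a $2\times 2$ minor of $Df$: namely $\operatorname{adj}(A)_{m\gamma}=(-1)^{m+\gamma}\,\partial(f^\alpha,f^\beta)/\partial(x^p,x^q)$, where $\{\alpha,\beta\}=\{1,2,3\}\setminus\{\gamma\}$ and $\{p,q\}=\{1,2,3\}\setminus\{m\}$, each listed in increasing order. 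Multiplying out $\operatorname{adj}(A)\operatorname{adj}(A)^t$, the sign $(-1)^{m+\gamma}(-1)^{n+\gamma}=(-1)^{m+n}$ factors out of the $\gamma$-summation, and as $\gamma$ ranges over $\{1,2,3\}$ the pair $\{\alpha,\beta\}$ ranges over all two-element subsets, so the $(m,n)$-entry of $\operatorname{adj}(B)$ becomes exactly $(-1)^{m+n}$ times a sum $\sum_{\alpha<\beta}$ of products of two Jacobian minors. Comparing this with $M_{mn}$, and using the skew-symmetry $\partial(f^\alpha,f^\beta)/\partial(x^i,x^k)=-\partial(f^\alpha,f^\beta)/\partial(x^k,x^i)$ to bring each minor into the orientation dictated by the PDE indices $i,j,k$ — together with the coefficient symmetries $g_{ijk}=g_{jik}$, $g_{iik}=g_{kki}$ and the vanishing $g_{iji}=g_{ijj}=0$ valid for coefficients coming from a $2$-inner product — one matches the nine equations $\operatorname{adj}(B)_{mn}=M_{mn}$ one by one with those of the PDE system (the equations with $k\in\{i,j\}$ reducing to the trivial $0=0$, since $\partial(f^\alpha,f^\beta)/\partial(x^i,x^i)=0$). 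Chaining the two equivalences proves the lemma.

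I expect the only genuine difficulty to be this last bookkeeping: keeping track of the adjugate sign $(-1)^{m+n}$, the transposition signs produced by the skew-symmetry of the $2\times 2$ minors, and the index symmetries of the $g_{ijk}$, so that the pairing of equations comes out consistently. Conceptually nothing is deep — everything rests on $\det(A^tA)=(\det A)^2$ and $\operatorname{adj}(A^tA)=\operatorname{adj}(A)\operatorname{adj}(A)^t$. A convenient way to organize the verification is to treat the three diagonal entries first (where the minors enter squared, so the orientation signs are immaterial) and then a single off-diagonal entry, say $(1,2)$, the remaining off-diagonal entries following from the symmetry of both $M$ and $\operatorname{adj}(B)$ and from relabeling $\{1,2,3\}$.
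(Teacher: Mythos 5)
Your proposal is correct and follows essentially the same route as the paper: the paper also rewrites the PDE as the cofactor identity $\mathrm{Cof}(D^tf)\cdot \mathrm{Cof}(Df)=G^{-1}$ (your $M=\operatorname{adj}(B)=\operatorname{adj}(Df)\operatorname{adj}(Df)^t$) and then converts it to the Beltrami system via $\mathrm{Cof}(A)=\det(A)\,(A^{-1})^t$, i.e.\ your $\operatorname{adj}(B)=\det(B)B^{-1}=J^2B^{-1}$. The only difference is that you carry out explicitly the sign and index bookkeeping identifying the nine equations of the PDE with the entries of $M$ (including the role of the symmetries $g_{ijk}=g_{jik}$, $g_{iji}=0$), a step the paper asserts without verification.
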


\begin{proof}
Let $Cof(A)$ be the cofactor matrix of $A$ (e.g. \cite{c}).
The hypothesis on $f$ implies that
$$
Cof(D^tf)\cdot Cof(Df)=G^{-1}.
$$
But $Cof(A)=det(A)\cdot (A^{-1})^t$
wherever $A$ is an invertible matrix.
Taking $A=Df$ we get
$$
Cof(Df)=J(\cdot,f)\cdot((Df)^{-1})^t.
$$
(See \cite{c} p. 4.)
Hence $Cof(Df)^t=J(\cdot, f)\cdot(Df)^{-1}$ and
so
$$
J(x,f)^2\cdot(Df(x))^{-1}\cdot((Df)^{-1})^t=G^{-1}(x)
$$
which is equivalent to the Beltrami-like system in the statement.
\end{proof}

\begin{thm}
\label{thh2}
There is a $2$-Riemannian metric in $\mathbb{R}^3$ which is
not locally flat.
\end{thm}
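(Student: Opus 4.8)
The plan is to produce the desired metric as a \emph{simple} $2$-Riemannian metric on $\mathbb{R}^3$ and then to extract the obstruction from Lemmas~\ref{propp1} and \ref{propp2}. Fix a positive, non-constant function $\rho\in C^\infty(\mathbb{R}^3)$ which on no open set coincides with a function of the form $r^4/|x-a|^4$; the simplest choice is $\rho(x^1,x^2,x^3)=e^{x^1}$. Let $h$ be the (conformally Euclidean) Riemannian metric $h_x(u,v)=\rho(x)\langle u,v\rangle$ on $\mathbb{R}^3$, where $\langle\cdot,\cdot\rangle$ is the standard Euclidean product, and let $g$ be the simple $2$-Riemannian metric generated by $h$ through \eqref{simple}. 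I claim that $g$ is not locally flat.

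First I would compute the functions $g_{ijk}$ of Theorem~\ref{thh1} for $g$ in the \emph{standard} chart of $\mathbb{R}^3$: a one-line substitution in \eqref{simple} gives $g_{ijk}=\rho^2\,\delta_{ijk}$ with $\delta_{ijk}=\delta_{ij}-\delta_{ik}\delta_{jk}$, so the matrix $G(x)$ appearing in Lemma~\ref{propp2} is well defined and equal to $\rho(x)^{-2}I$. Next, assume for contradiction that $g$ is locally flat, and take $p_0=0$. Then there is a $2$-isometry $\Psi$ of a neighbourhood $\Omega$ of $0$ onto an open subset of $(\mathbb{R}^3,g^{st})$. Viewing $\Psi$ as a map $\Omega\to\mathbb{R}^3$ in standard coordinates and reading the chain of equalities from the necessity part of the proof of Theorem~\ref{thh1} for the identity chart, one sees that $f:=\Psi$ is a $C^\infty$ diffeomorphism of $\Omega$ onto its image solving the first-order system of Theorem~\ref{thh1} with the $g_{ijk}$ just computed; in particular $J(x,f)\neq0$ on $\Omega$.

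Now Lemma~\ref{propp2} shows that $f$ solves the Beltrami-like system $D^tf(x)\cdot Df(x)=J(x,f)^2\,\rho(x)^{-2}\,I$ on $\Omega$, and since $\mu:=\rho^{-2}$ is positive, non-constant and $C^\infty$, Lemma~\ref{propp1} produces $r>0$ and $a\notin\Omega$ with $\rho(x)^{-2}=\bigl(|x-a|^4/r^4\bigr)^2$, that is, $\rho(x)=r^4/|x-a|^4$, on the open set $\Omega$. With $\rho=e^{x^1}$ this is impossible: the left-hand side is independent of $x^2$, whereas $\partial\bigl(r^4/|x-a|^4\bigr)/\partial x^2=-4r^4(x^2-a^2)/|x-a|^6$ cannot vanish identically on an open set. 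This contradiction proves that $g$ is not locally flat.

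The step I expect to require the most care is the passage in the second paragraph: Theorem~\ref{thh1} is stated with an existential quantifier over coordinate systems, so to reach a contradiction one must know that local flatness forces the system of Theorem~\ref{thh1} to be solvable \emph{in the standard chart} of $\mathbb{R}^3$ — the only chart in which the preliminary computation puts $G$ into the scalar shape $\mu I$ that Lemma~\ref{propp1} requires. This rests on the (reversible) correspondence already present inside the proof of Theorem~\ref{thh1} between a diffeomorphic solution of the system in a chart $(O,\phi)$ and a $2$-isometry of $\phi(O)$ onto $(\mathbb{R}^3,g^{st})$; taking $\phi$ to be the identity on a small ball turns the $2$-isometry supplied by local flatness into the required solution $f$. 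Everything else is the short pointwise identity for the $g_{ijk}$ and direct applications of Lemmas~\ref{propp2} and \ref{propp1}.
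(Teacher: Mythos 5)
Your proof is correct and follows essentially the same route as the paper's: take a conformal multiple of $g^{st}$, combine Theorem~\ref{thh1} with Lemmas~\ref{propp2} and~\ref{propp1} to force the conformal factor into the form $r^4/|x-a|^4$, and then choose a factor violating that form. You merely make explicit what the paper leaves implicit --- the concrete choice $\rho=e^{x^1}$ and the reduction to the standard chart via the reversible correspondence inside the proof of Theorem~\ref{thh1} --- so the argument is the same.
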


\begin{proof}
Just take a $2$-Riemannian metric {\em conformally equivalent to $g^{st}$}, i.e.,
$g=\lambda\cdot g^{st}$ for some positive function $\lambda$.
If $g$ were locally flat then we would have by Theorem \ref{thh1} and Lemma \ref{propp2} that
the PDE in Lemma \ref{propp1} has a solution $f$ for that $\lambda$.
Hence $\mu$ would be as in the conclusion of Lemma \ref{propp1}.
The result then follows by taking a suitable $\lambda$.
\end{proof}

It can be proved however that, as in the surface case,
every $2$-Riemannian $3$-manifold is simple.

\section{$\mathfrak{D}$-Pseudoconnections}

\noindent
In this section we define the kind of connection
which we will associate to a $2$-Riemannian manifold.

\subsection{Definition}
Recall that an ordinary (or affine) connection of a vector bundle $\xi$ over a manifold $M$ is defined in classical differential geometry as an $\mathbb{R}$-bilinear map $\nabla:\mathcal{X}\times \Omega^0(\xi)\to\Omega^0(\xi)$
satisfying $\nabla_{\varphi X}s=\varphi\nabla_Xs$ and $\nabla_X(\varphi s)=X(\varphi) s+\varphi \nabla_Xs$
for all $(X,s)\in \mathcal{X}\times \Omega^0(\xi)$ and all
$\varphi\in C^\infty(M)$.
This definition was extended in
\cite{a2} where the last property is replaced by
$\nabla_X(\varphi s)=X(\varphi) P(s)+ \varphi\nabla_Xs$ for some
homomorphism $P:\Omega^0(\xi)\to \Omega^0(\xi)$.
A further extension \cite{a1} allows $P$ to take values not necessarily in $\Omega^0(\xi)$ but in
$\Omega^0(\eta)$ for some another vector bundle $\eta$
over $M$.
Here we have the necessity of further extending
the definition by allowing
$P$ to take values in an arbitrary
module $\mathfrak{D}$ over $C^\infty(M)$.
The precise definition is as follows.

\vspace{5pt}

A pseudoconnection with values in $\mathfrak{D}$ of $\xi$
(or {\em $\mathfrak{D}$-pseudoconnection} for short)
is an $\mathbb{R}$-bilinear map
$\nabla:\mathcal{X}\times \Omega^0(\xi)\to \mathfrak{D}$ for which
there is
a homomorphism $P:\Omega^0(\xi)\to \mathfrak{D}$, called the
{\em principal homomorphism} of $\nabla$,
such that
$\nabla_{\varphi X}s=\varphi\nabla_Xs$
and $\nabla_X(\varphi s)=X(\varphi) P(s)+\varphi\nabla_Xs$
for all $(X,s)\in \mathcal{X}\times \Omega^0(\xi)$ and $\varphi\in C^\infty(M)$.
(We shall use the customary notation $\nabla_XY$ instead of $\nabla(X,Y)$.)
When $\xi=TM$ is the tangent bundle of $M$
we say that $\nabla$ is {\em torsion free} if $
\nabla_XY-\nabla_YX=P([X,Y])$ for all $X,Y\in \mathcal{X}$.

\vspace{5pt}

The class of $\mathfrak{D}$-pseudoconnections
is broad enough to include not only the {\em ordinary connections} (where $\mathfrak{D}=\Omega^0(\xi)$ and $P$ is the identity) and
the {\em ordinary pseudoconnections} (where $\mathfrak{D}=\Omega^0(\xi)$)
but also the {\em $O$-derivative operators} where
$\mathfrak{D}=\Omega^0(\eta)$ for some vector bundle $\eta$ over
$M$.

Let us introduce a basic example of $\mathfrak{D}$-pseudoconnection
on a vector bundle $\xi$ over $M$. Given a $C^\infty(M)$-module $\mathfrak{D}$
we denote by $\Omega^1(\xi,\mathfrak{D})$ the set
of all homomorphisms $\omega:\Omega^0(\xi)\to \mathfrak{D}$.
This is a $C^\infty(M)$-module under the usual operations.
For simplicity we write
$\Omega^1(M,\mathfrak{D})$ instead of
$\Omega^1(TM,\mathfrak{D})$.

For every $r\geq 0$
the module $\mathfrak{D}$ acts on the right on the
$C^\infty(M)$-module of all maps
$\varphi:\mathcal{X}\times\overset{(r)}{\cdots}\times \mathcal{X}\times \Omega^0(\xi)\to C^\infty(M)$. This action is defined by
$$
(\varphi\cdot d)_{(X_1,\cdots,X_r)}s=\varphi(X_1,\cdots, X_r,s)d.
$$
In particular for $r=0$ we
have $(\varphi\cdot d)s=\varphi(s) d$, $\forall s\in \Omega^0(\xi)$
$\forall \varphi:\Omega^0(\xi)\to C^\infty(M)$.
It is clear that $\varphi \cdot d\in \Omega^1(\xi,\mathfrak{D})$ whenever
$(\varphi,d)\in\Omega^1(\xi,C^\infty(M))\times \mathfrak{D}$.

On the other hand, we define the differential
$\partial\varphi:\mathcal{X}\times \Omega^0(\xi)\to C^\infty(M)$
of any map $\varphi:\Omega^0(\xi)\to C^\infty(M)$ by
\begin{equation}
\label{differential}
\partial\varphi_Xs=X(\varphi(s)),
\,\,\,\,\,\,\forall (X,s)\in\mathcal{X}\times \Omega^0(\xi).
\end{equation}
It is a $C^\infty(M)$-pseudoconnection of $\xi$ with principal homomorphism $\varphi$ whenever $\varphi\in\Omega^1(\xi,C^\infty(M))$.
Therefore,
$\partial\varphi \cdot d$ is a $\mathfrak{D}$-pseudoconnection
of $\xi$ with principal homomorphism $\varphi\cdot d$,
$\forall (\varphi,d)\in\Omega^1(\xi,C^\infty(M))\times \mathfrak{D}$.

We also define the product $\varphi \cdot\omega:\mathcal{X}\times\Omega^0(\xi)\to\mathfrak{D}$
between $\varphi:\Omega^0(\xi)\to C^\infty(M)$ and $\omega:\mathcal{X}\to \mathfrak{D}$ by
$$
(\varphi\cdot\omega)_Xs=\varphi(s)\omega(X),
\,\,\,\,\,\forall (X,s)\in \mathcal{X}\times\Omega^0(\xi).
$$
Then, $\varphi\cdot\omega$ is
a $\mathfrak{D}$-pseudoconnection of $\xi$ with zero principal homomorphism
(i.e. it is $C^\infty(M)$-bilinear) for every $(\varphi,\omega)\in\Omega^1(\xi,\mathfrak{D})\times\Omega^1(M,\mathfrak{D})$.

For every triple $(\varphi,d,\omega)\in\Omega^1(\xi,C^\infty(M))\times
\mathfrak{D}\times\Omega^1(M,\mathfrak{D})$ we define
\begin{equation}
\label{fundamental}
\nabla^{(\varphi,d,\omega)}=\partial\varphi \cdot d+\varphi\cdot \omega.
\end{equation}
As $\partial\varphi\cdot d$ is a $\mathfrak{D}$-pseudoconnection
of $\xi$ with principal
homomorphism $\varphi\cdot d$, and $\varphi \cdot\omega$
is $C^\infty(M)$-bilinear, we have that $\nabla^{(\varphi,d,\omega)}$
is a $\mathfrak{D}$-pseudoconnection of $\xi$ with principal homomorphism
$\varphi\cdot d$.
A $\mathfrak{D}$-pseudoconnection
equals to $\nabla^{(\varphi,d,\omega)}$ for
some $(\varphi,d,\omega)\in\Omega^1(\xi,C^\infty(M))\times
\mathfrak{D}\times\Omega^1(M,\mathfrak{D})$ will be refereed to as
a {\em fundamental $\mathfrak{D}$-pseudoconnection of $\xi$}.

Clearly the sum of finitely many fundamental $\mathfrak{D}$-pseudoconnections
$\{\nabla^{(\varphi_i,d_i,\omega_i)}\}_{i=1}^k$ of $\xi$ is
a $\mathfrak{D}$-pseudoconnection of $\xi$ with principal homomorphism
$\sum_{i=1}^k\varphi_i\cdot d_i$.
Let us prove that these sums exhaust all possible $\mathfrak{D}$-pseudoconnections of
the $k$-dimensional trivial bundle $\epsilon^k_M=M\times \mathbb{R}^k$ over $M$.

\begin{prop}
\label{laforma}
Every $\mathfrak{D}$-pseudoconnection of
$\epsilon^k_M$ is the sum of $k$ fundamental $\mathfrak{D}$-pseudoconnections.
\end{prop}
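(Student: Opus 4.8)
The plan is to exploit that the trivial bundle $\epsilon^k_M=M\times\mathbb{R}^k$ carries a global frame, namely the constant sections $e_1,\dots,e_k$ associated with the standard basis of $\mathbb{R}^k$, so that every $s\in\Omega^0(\epsilon^k_M)$ has a unique expression $s=\sum_{j=1}^k\varphi_j(s)\,e_j$ with $\varphi_j(s)\in C^\infty(M)$. First I would note that each coordinate map $\varphi_j:\Omega^0(\epsilon^k_M)\to C^\infty(M)$ is a homomorphism of $C^\infty(M)$-modules, hence $\varphi_j\in\Omega^1(\epsilon^k_M,C^\infty(M))$.

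Let $\nabla$ be an arbitrary $\mathfrak{D}$-pseudoconnection of $\epsilon^k_M$ with principal homomorphism $P$. For $j=1,\dots,k$ I would set $d_j=P(e_j)\in\mathfrak{D}$ and define $\omega_j:\mathcal{X}\to\mathfrak{D}$ by $\omega_j(X)=\nabla_Xe_j$. One checks that $\omega_j\in\Omega^1(M,\mathfrak{D})$: its $\mathbb{R}$-linearity in $X$ is part of the definition of a pseudoconnection, while its $C^\infty(M)$-homogeneity is exactly the identity $\nabla_{\varphi X}e_j=\varphi\,\nabla_Xe_j$. Thus each triple $(\varphi_j,d_j,\omega_j)$ lies in $\Omega^1(\epsilon^k_M,C^\infty(M))\times\mathfrak{D}\times\Omega^1(M,\mathfrak{D})$ and defines a fundamental $\mathfrak{D}$-pseudoconnection $\nabla^{(\varphi_j,d_j,\omega_j)}$ in the sense of \reff{fundamental}.

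The main step is then to verify the identity $\nabla=\sum_{j=1}^k\nabla^{(\varphi_j,d_j,\omega_j)}$. Using the $\mathbb{R}$-bilinearity of $\nabla$ and the Leibniz-type rule $\nabla_X(\psi s)=X(\psi)\,P(s)+\psi\,\nabla_Xs$, for every $X\in\mathcal{X}$ and $s\in\Omega^0(\epsilon^k_M)$ one computes
$$
\nabla_Xs=\nabla_X\Big(\sum_{j=1}^k\varphi_j(s)\,e_j\Big)=\sum_{j=1}^k\big(X(\varphi_j(s))\,P(e_j)+\varphi_j(s)\,\nabla_Xe_j\big)=\sum_{j=1}^k\big(X(\varphi_j(s))\,d_j+\varphi_j(s)\,\omega_j(X)\big),
$$
and the last expression is, by the definition of $\partial\varphi_j$ in \reff{differential}, of the product $\varphi_j\cdot\omega_j$, and of $\nabla^{(\varphi_j,d_j,\omega_j)}$ in \reff{fundamental}, precisely $\sum_{j=1}^k\nabla^{(\varphi_j,d_j,\omega_j)}_Xs$. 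This proves the claim.

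There is no real obstacle here; the single place where the hypothesis is used is the existence of the global frame $\{e_j\}$, which is exactly what allows one both to form the coordinate homomorphisms $\varphi_j$ and to reduce the behaviour of $\nabla$ on an arbitrary section to its behaviour on the finitely many frame sections $e_j$. The remainder is a routine unwinding of the definitions.
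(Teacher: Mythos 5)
Your proof is correct and follows exactly the paper's argument: expand a section in the global frame $\{e_j\}$ of $\epsilon^k_M$, set $d_j=P(e_j)$ and $\omega_j(X)=\nabla_Xe_j$, and apply the Leibniz rule to identify $\nabla$ with $\sum_j\nabla^{(\varphi_j,d_j,\omega_j)}$. The only difference is that you write out the final computation that the paper leaves as ``follows from the properties of $\mathfrak{D}$-pseudoconnections.''
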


\begin{proof}
Let $\nabla$ be a $\mathfrak{D}$-pseudoconnection of $\epsilon_M^k$.
Fix a base $\{e_1,\cdots, e_k\}$ of $\Omega^1(\epsilon^k_M)$
as a $C^\infty(M)$-module.
Then, there are $\varphi_1,\cdots, \varphi_k\in \Omega^1(\epsilon_M^k,\mathfrak{D})$
such that
$$
s=\sum_{i=1}^k\varphi_i(s)e_i,
\,\,\,\,\forall s\in\Omega^1(\epsilon^k_M).
$$
Let $P$ be the principal homomorphism of $\nabla$.
For all $i\in\{1,\cdots, k\}$ we define
$d_i=P(e_i)\in \mathfrak{D}$
and $\omega_i\in\Omega^1(M,\mathfrak{D})$ by
$\omega_i(X)=\nabla_Xe_i$, $\forall X\in \mathcal{X}$.
It follows from the properties of $\mathfrak{D}$-pseudoconnections that
$\nabla=\sum_{i=1}^k\nabla^{(\varphi_i,d_i,\omega_i)}$.
\end{proof}

For $k=1$ the above proposition gives the following example.

\begin{example}
\label{ex1}
Let $\epsilon^1_M$ be the trivial line bundle over $M$. As
$\Omega^0(\epsilon^1_M)$ is canonically isomorphic to $C^\infty(M)$ every $\mathfrak{D}$-pseudoconnection $\nabla$
of $\epsilon_M^1$ has the form
$$
\nabla_Xf=X(f)\cdot A+f\cdot \omega(X),
\,\,\,\,\,\,\,\,\,\forall (X,f)\in \mathcal{X}\times C^\infty(M),
$$
for some $A\in \mathfrak{D}$ and some
$\omega:\Omega^1(M,\mathfrak{D})$.
\end{example}

There is a well known method to construct ordinary connections
called pullback. Let us perform it but for
$\mathfrak{D}$-pseudoconnections, the only difference being the target module $\mathfrak{D}$.

First we recall some basic definitions.
A {\em bundle map} between
vector bundles $\Pi:\xi\to M$ and $\overline{\Pi}:\overline{\xi}\to\overline{M}$
over differentiable manifolds $M$ and $\overline{M}$ respectively is
a $C^\infty$ map $F:\xi\to\overline{\xi}$ which carries each vector space
$\xi_p$ isomorphically onto one of the vector spaces $\overline{\xi}_{\overline{p}}$.
Note that $F$ induces two $C^\infty$ maps
$f:M\to\overline{M}$, $f(p)=\overline{p}$, and
$F_*:\Omega^0(\xi)\to\Omega^0(\overline{\xi})$,
$F_*s=\overline{s}$, where for all $s\in \Omega^0(\xi)$ the section $\overline{s}\in\Omega^0(\overline{\xi})$ is the unique one satisfying
$$
F(s(p))=\overline{s}(f(p)),
\,\,\,\,\,\forall p\in M.
$$
We shall assume hereafter that the induced map $f$ above
is a diffeomorphism for all bundle map $F$.
With this assumption we have that the induced map $F_*$ satisfies
$$
F_*(\varphi s)=(\varphi\circ f^{-1})F_*s,
\,\,\,\,\,\,\,\forall s\in\Omega^0(\xi),\,\,\forall \varphi\in C^\infty(M).
$$
The basic example comes from the derivative
$Df:TM\to T\overline{M}$ of a $C^\infty$ diffeomorphism $f:M\to\overline{M}$.
In such a case we write $f_*$ instead of $(Df)_*$ for simplicity.

Now, let $F:\xi\to\overline{\xi}$ be a bundle map
between vector bundles $\xi,\overline{\xi}$ over $M$ and $\overline{M}$ respectively. Let $\overline{\Phi}: \overline{\mathfrak{D}}\to\mathfrak{D}$
be a map from a $C^\infty(\overline{M})$ module $\overline{\mathfrak{D}}$
into a $C^\infty(M)$ module $\mathfrak{D}$.
Given positive integers $k,t$ and a map $\mathfrak{A}:\mathcal{X}\times
\overset{(k)}{\cdots}\times\mathcal{X}\times\Omega^0(\xi)\times \overset{(t)}{\cdots}
\times \Omega^0(\xi)\to \overline{\mathfrak{D}}$
we define the {\em pullback
of $\mathfrak{A}$ under $(F,\overline{\Phi})$} as the map
$(F,\overline{\Phi})^*(\mathfrak{A}):\mathcal{X}\times
\overset{(s)}{\cdots}\times\mathcal{X}\times\Omega^0(\xi)\times \overset{(t)}{\cdots}
\times \Omega^0(\xi)\to \mathfrak{D}$ defined by
$$
(F,\overline{\Phi})^*(\mathfrak{A})(X_1,\cdots, X_k,s_1,\cdots,s_t)=
\overline{\Phi}(\mathfrak{A}(f_*X_1,\cdots,f_*X_k,F_*s_1,
\cdots,F_*s_t),
$$
for all
$X_1,\cdots X_k\in \mathcal{X}$ and $s_1,\cdots,s_t\in\Omega^0(\xi)$.

With these notations and definitions we have the following

\begin{prop}
\label{pull}
If $\overline{\Phi}$ above satisfies
\begin{description}
\item[(P1)]
$\overline{\Phi}(\overline{d}_1+\overline{d}_2)=
\overline{\Phi}(\overline{d}_1)+\overline{\Phi}(\overline{d}_2)$;
\item[(P2)]
$\overline{\Phi}((\varphi\circ f^{-1})\overline{d}_1)=
\varphi \overline{\Phi}(\overline{d}_1)$, $\forall \overline{d}_1,\overline{d}_2\in\overline{\mathfrak{D}}$,
$\forall \varphi\in C^\infty(M)$,
\end{description}
then
the pullback $(F,\overline{\Phi})^*(\overline{\nabla})$
of a $\overline{\mathfrak{D}}$-pseudoconnection $\overline{\nabla}$ with principal homomorphism $\overline{P}$ of $\overline{\xi}$ is
a $\mathfrak{D}$-pseudoconnection with principal homomorphism
$P=\overline{\Phi}\circ \overline{P}\circ F_*$ of $\xi$.
\end{prop}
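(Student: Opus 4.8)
The plan is to verify directly that $\nabla:=(F,\overline{\Phi})^*(\overline{\nabla})$ satisfies the three defining axioms of a $\mathfrak{D}$-pseudoconnection with principal homomorphism $P=\overline{\Phi}\circ\overline{P}\circ F_*$, reducing each computation to the corresponding property of $\overline{\nabla}$ together with (P1)--(P2) and a few standard identities for pushforwards under the diffeomorphism $f$. Unwinding the definition, $\nabla_Xs=\overline{\Phi}(\overline{\nabla}_{f_*X}(F_*s))$ for all $(X,s)\in\mathcal{X}\times\Omega^0(\xi)$.

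First I would collect the ingredients. Applying (P2) with $\varphi$ constant, and combining with (P1), shows that $\overline{\Phi}$ is $\mathbb{R}$-linear; since $f_*$ and $F_*$ are $\mathbb{R}$-linear and $\overline{\nabla}$ is $\mathbb{R}$-bilinear, $\nabla$ is $\mathbb{R}$-bilinear. I would then invoke the standard facts that, for the diffeomorphism $f$, one has $f_*(\varphi X)=(\varphi\circ f^{-1})f_*X$ and $(f_*X)(\psi)=\bigl(X(\psi\circ f)\bigr)\circ f^{-1}$ for $\varphi\in C^\infty(M)$, $\psi\in C^\infty(\overline{M})$, $X\in\mathcal{X}$, together with the identity $F_*(\varphi s)=(\varphi\circ f^{-1})F_*s$ already noted in the excerpt. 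In particular, taking $\psi=\varphi\circ f^{-1}$ in the second identity gives $(f_*X)(\varphi\circ f^{-1})=X(\varphi)\circ f^{-1}$.

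For $C^\infty(M)$-homogeneity in the first slot: using $f_*(\varphi X)=(\varphi\circ f^{-1})f_*X$, then homogeneity of $\overline{\nabla}$ in its first slot, then (P2), one gets $\nabla_{\varphi X}s=\overline{\Phi}\bigl((\varphi\circ f^{-1})\overline{\nabla}_{f_*X}(F_*s)\bigr)=\varphi\,\nabla_Xs$. For the Leibniz-type rule: rewrite $F_*(\varphi s)=(\varphi\circ f^{-1})F_*s$, apply the defining property of $\overline{\nabla}$ (with principal homomorphism $\overline{P}$) to expand $\overline{\nabla}_{f_*X}\bigl((\varphi\circ f^{-1})F_*s\bigr)=(f_*X)(\varphi\circ f^{-1})\,\overline{P}(F_*s)+(\varphi\circ f^{-1})\,\overline{\nabla}_{f_*X}(F_*s)$, replace $(f_*X)(\varphi\circ f^{-1})$ by $X(\varphi)\circ f^{-1}$, and then apply $\overline{\Phi}$, using (P1) to split the sum and (P2) on each term; this yields exactly $X(\varphi)\,P(s)+\varphi\,\nabla_Xs$. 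Finally, $P=\overline{\Phi}\circ\overline{P}\circ F_*$ is a homomorphism: additivity follows from additivity of $F_*$, $\overline{P}$ and (P1), while $P(\varphi s)=\overline{\Phi}\bigl(\overline{P}((\varphi\circ f^{-1})F_*s)\bigr)=\overline{\Phi}\bigl((\varphi\circ f^{-1})\overline{P}(F_*s)\bigr)=\varphi\,P(s)$, using $C^\infty(\overline{M})$-linearity of $\overline{P}$ and then (P2).

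I do not expect a genuine obstacle: the argument is essentially bookkeeping. The only point demanding care is the systematic interplay between a function $\varphi$ on $M$ and its transport $\varphi\circ f^{-1}$ on $\overline{M}$, since the hypotheses on $\overline{\Phi}$ are stated in terms of $\varphi\circ f^{-1}$; one must push functions and vector fields forward before invoking (P1)--(P2) and read the result back on $M$ afterward. Hypotheses (P1) and (P2) are precisely what legitimizes splitting sums and extracting scalar factors at the level of $\mathfrak{D}$, so no further assumption on $\overline{\Phi}$ is needed.
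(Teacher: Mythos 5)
Your proposal is correct and follows essentially the same route as the paper's proof: a direct verification of $\mathbb{R}$-bilinearity, $C^\infty(M)$-homogeneity in the first slot via $f_*(\varphi X)=(\varphi\circ f^{-1})f_*X$ and (P2), and the Leibniz rule via $F_*(\varphi s)=(\varphi\circ f^{-1})F_*s$, the identity $(f_*X)(\varphi\circ f^{-1})=X(\varphi)\circ f^{-1}$, and (P1)--(P2). The only additions are your explicit checks that $\overline{\Phi}$ is $\mathbb{R}$-linear and that $P$ is a homomorphism, which the paper leaves implicit.
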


\begin{proof}
We see clearly that $(F,\overline{\Phi})^*(\overline{\nabla})$ is $\mathbb{R}$-linear. Moreover,
\[
\begin{array}{ccl}
(F,\overline{\Phi})^*(\overline{\nabla})_{\varphi X}s
& = &
\overline{\Phi}(\overline{\nabla}_{f_*(\varphi X)}F_*s)\\
& = &
\overline{\Phi}((\varphi\circ f^{-1})\overline{\nabla}_{f_*X}F_*s)\\
& = &
\varphi\cdot \overline{\Phi}(\overline{\nabla}_{f_*X}F_*s)\\
& = &
\varphi\cdot (F,\overline{\Phi})^*(\overline{\nabla})_Xs
\end{array}
\]
and
\[
\begin{array}{ccl}
(F,\overline{\Phi})^*(\overline{\nabla})_{X}(\varphi s)
& = &
\overline{\Phi}(\overline{\nabla}_{f_*X}F_*(\varphi s))\\
& = &
\overline{\Phi}(\overline{\nabla}_{f_*(\varphi X)}(\varphi \circ f^{-1})\cdot F_*s)\\
& = &
\overline{\Phi}( f_*X(\varphi \circ f^{-1})\cdot \overline{P}(F_*s)+
(\varphi\circ f^{-1})\cdot \overline{\nabla}_{f_*(\varphi X)}F_*s)\\
& = &
\overline{\Phi}( (X(\varphi) \circ f^{-1})\cdot \overline{P}(F_*s)+
(\varphi\circ f^{-1})\cdot \overline{\nabla}_{f_*(\varphi X)}F_*s)\\
& = &
X(\varphi) \overline{\Phi}(\overline{P}(F_*s))+\varphi\cdot \overline{\Phi}(\overline{\nabla}_{f_*X}F_*s)\\
& = &
X(\varphi) P(s)+\varphi\cdot (F,\overline{\Phi})^*(\overline{\nabla})_Xs
\end{array}
\]
for all $(X,s)\in \mathcal{X}(M)\times \Omega^0(\xi)$ and all $\varphi \in C^\infty(M)$.

So, $(F,\overline{\Phi})^*(\overline{\nabla})$
is a $\mathfrak{D}$-pseudoconnection with principal homomorphism $P$.
\end{proof}

\subsection{A curvature for $\mathfrak{D}$-pseudoconnections}
\label{lacurva}
The curvature of an ordinary connection $\nabla$ of a
vector bundle $\xi$ over a manifold $M$ is
defined in classical geometry as
the map $R:\mathcal{X}\times\mathcal{X}\times\Omega^0(\xi)\to\Omega^0(\xi)$,
\begin{equation}
\label{ordinary}
R(X,Y)s=\nabla_X\nabla_Ys-\nabla_Y\nabla_Xs-\nabla_{[X,Y]}s,
\,\,\,\,\,\,\,\,\,\,\,\forall (X,Y,s)\in \mathcal{X}\times\mathcal{X}\times\Omega^0(\xi).
\end{equation}
It follows that $R$ is a tensor field
(i.e. $C^\infty(M)$-linear in its three variables) which is skew-symmetric
in the first two variables and satisfies
the Bianchi inequality whenever $\xi=TM$ and $\nabla$ is torsion free.
This curvature was extended to ordinary pseudoconnections
in \cite{a2} by setting
$$
R(X,Y)s =  \nabla_X\nabla_Y(Ps)-\nabla_Y\nabla_X(Ps)
- \nabla_XP(\nabla_Ys)+P\nabla_X\nabla_Ys+
$$
$$
\nabla_YP(\nabla_Xs)-
P\nabla_Y\nabla_Xs-
P\left(\nabla_{[X,Y]}P(s)\right),
\,\,\,\,\,\,\forall (X,Y,s)\in \mathcal{X}\times\mathcal{X}\times\Omega^0(\xi),
$$
where $P$ is the principal homomorphism of $\nabla$.
We would like to extend it to
$\mathfrak{D}$-pseudoconnection,
but, unfortunately, expressions like $\nabla_X\nabla_Y(Ps)$ (say)
are meaningless for arbitrary modules $\mathfrak{D}$.

Here we define a curvature for certain $\mathfrak{D}$-pseudoconnections
depending on the module $\mathfrak{D}$.
To define it we fix differentiable manifold $M$ and an integer $k\geq 1$.
Define $\mathfrak{D}^k(M)$ as the set of all maps
from $d:\mathcal{X}\times\overset{(k)}{\cdots}\times \mathcal{X}\to C^\infty(M)$.
This set is clearly a $C^\infty(M)$-module
if equipped with the standard sum and the multiplication
$$
(\varphi d)(X_1,\cdots,X_k)=\varphi d(X_1,\cdots,X_k),
\,\,\,\,\,\forall X_1,\cdots,X_k\in \mathcal{X}(M), \forall \varphi\in C^\infty(M).
$$
Given $X\in \mathcal{X}$ and $d\in \mathfrak{D}^k(M)$
we define $X(d)\in \mathfrak{D}^k(M)$ by
$$
X(d)(X_1,\cdots,X_k)=X(d(X_1,\cdots,X_k)),
\,\,\,\,\,\,\,\forall X_1,\cdots,X_k\in \mathcal{X}.
$$

\begin{defi}
\label{curvature}
The {\em curvature} of a $\mathfrak{D}^k(M)$-pseudoconnection $\nabla$ of a vector bundle $\xi$ over $M$ is the map $R:\mathcal{X}\times\mathcal{X}\times \Omega^0(\xi)\to \mathfrak{D}^k(M)$ defined by
$$
R(X,Y)s=X(\nabla_Y s)-Y(\nabla_Xs)-\nabla_{[X,Y]}s,
\,\,\,\,\,\,\forall (X,Y,s)\in \mathcal{X}\times\mathcal{X}\times \Omega^0(\xi).
$$
Sometimes we write $R^\nabla$ to indicate dependence on $\nabla$.
\end{defi}

This definition has both advantages and disadvantages if compared with that in \cite{a2}.  For it is simpler than \cite{a2},
but, unfortunately, the curvature under such a definition is not a tensor in the third variable and never vanishes in the $2$-Riemannian case
(see Corollary \ref{never-vanish2}).

\begin{example}
\label{exxx}
A straightforward computation shows that
the curvature of the fundamental $\mathfrak{D}^k(M)$-pseudoconnection $\nabla^{(\varphi,d,\omega)}$
of $\xi$ defined in (\ref{fundamental}) is
$$
R^{\nabla^{(\varphi,d,\omega)}}=
(\omega_d-\omega)\circ\hat{\partial}\varphi+\varphi\cdot d\omega,
$$
where:
\begin{itemize}
\item
$\omega_d\in\Omega^1(\xi,\mathfrak{D}^k(M))$ is the evaluation $1$-form
$\omega_d(X)= X(d)$;
\item
$d\omega: \mathcal{X}\times\mathcal{X}\to\mathfrak{D}^k(M)$ is the differential of $\omega$,
$$
d\omega(X,Y)=X(\omega(Y))-Y(\omega(X))-\omega([X,Y]),
$$
and
\item
$\hat{\partial}\varphi:\mathcal{X}\times\mathcal{X}\times\Omega^0(\xi)\to \mathcal{X}$ is defined by
$$
\hat{\partial}\varphi_{(X,Y)}s=Y(\varphi(s))X-X(\varphi(s))Y.
$$
\end{itemize}
We can use this formula together with Proposition \ref{laforma}
in order to compute
the curvature of every $\mathfrak{D}^k(M)$-pseudoconnection
of the trivial bundle $\epsilon_M^k$.
\end{example}

Next we state some properties of the aforementioned curvature.
First observe that given $f\in C^\infty(M)$ we can apply the differential $df$ to any pair
of vector fields $(X,Y)\in \mathcal{X}\times\mathcal{X}$
by defining
\begin{equation}
\label{df}
df(X,Y)=df(Y)X-df(X)Y.
\end{equation}
From this we obtain a map
$df:\mathcal{X}\times\mathcal{X}\to\mathcal{X}$
which is clearly a $\mathcal{X}$-valued $2$-form.

We can extend (\ref{differential})
to include $P:\Omega^0(\xi)\to \mathfrak{D}^k(M)$ yielding
$\partial P:\mathcal{X}\times\Omega^0(\xi)\to\mathfrak{D}^k(M)$ defined by
\begin{equation}
\label{differential2}
\partial P_Xs=X(P(s)),
\,\,\,\,\,\,\,\,\forall (X,s)\in\mathcal{X}\times \Omega^0(\xi).
\end{equation}
Thus, for every $\mathfrak{D}^k(M)$-pseudoconnection
$\nabla$ with principal homomorphism $P$ of $\xi$ we can define
the $C^\infty(M)$-bilinear map
$\partial\nabla:\mathcal{X}\times\Omega^0(\xi)\to\mathfrak{D}^k(M)$ by
$$
\partial\nabla=\partial P-\nabla.
$$

Denote by
$\mathfrak{D}_0^k(M)$
the submodule of $\mathfrak{D}^k(M)$ consisting of those $d\in \mathfrak{D}^k(M)$ which are $C^\infty(M)$-linear in the first variable.
Note that $\mathfrak{D}^1_0(M)=\Omega^1(M)$.
The proof of the theorem below follows from straightforward computations which are left to the reader.

\begin{thm}
\label{property-curvature}
The curvature $R$ of a $\mathfrak{D}_0^k(M)$-pseudoconnection $\nabla$
of $\xi$ has the following properties
for all $(X,Y,Z,s)\in \mathcal{X}\times\mathcal{X}\times\mathcal{X}\times\Omega^0(\xi)$
and $f\in C^\infty(M)$.

\begin{enumerate}
\item
$R$ is $\mathbb{R}$-trilinear.
\item
$R(X,Y)s=-R(Y,X)s$.
\item
$R(f\cdot X,Y)s=f\cdot R(X,Y)s$.
\item
$R(X,Y)(f\cdot s)=\partial\nabla(df(X,Y),s)+f\cdot R(X,Y)s$.
\item
If $\xi=TM$ and $\nabla$ is torsion free, then
$$
R(X,Y)Z+R(Y,Z)X+R(Z,X)Y=
$$
$$
\partial\nabla(X,[Y,Z])+\partial\nabla(Y,[Z,X])+\partial\nabla(Z,[X,Y]).
$$
\end{enumerate}
\end{thm}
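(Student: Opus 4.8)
The plan is to verify the five assertions one at a time by unwinding Definition~\ref{curvature},
\[
R(X,Y)s=X(\nabla_Ys)-Y(\nabla_Xs)-\nabla_{[X,Y]}s ,
\]
and reducing each to the pseudoconnection axioms $\nabla_{\varphi X}s=\varphi\nabla_Xs$ and $\nabla_X(\varphi s)=X(\varphi)P(s)+\varphi\nabla_Xs$, to the Leibniz-type rules $(\varphi X)(d)=\varphi\,X(d)$ and $X(\varphi d)=X(\varphi)\,d+\varphi\,X(d)$ for $d\in\mathfrak{D}^k(M)$ and $\varphi\in C^\infty(M)$ (both immediate from the definition of $X(d)$), and to the classical Lie-bracket identities $[\varphi X,Y]=\varphi[X,Y]-Y(\varphi)X$, $[X,Y](\varphi)=X(Y\varphi)-Y(X\varphi)$ and the Jacobi identity. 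Items (1) and (2) then need no computation: $R$ is $\mathbb{R}$-trilinear because $\nabla$ is $\mathbb{R}$-bilinear, $d\mapsto X(d)$ and $X\mapsto X(d)$ are $\mathbb{R}$-linear, and $(X,Y)\mapsto[X,Y]$ is $\mathbb{R}$-bilinear; and the skew-symmetry (2) comes from swapping $X$ and $Y$ and using $[Y,X]=-[X,Y]$.

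For (3) I would replace $X$ by $fX$: the first term becomes $(fX)(\nabla_Ys)=f\,X(\nabla_Ys)$, the second becomes $Y(\nabla_{fX}s)=Y(f\nabla_Xs)=Y(f)\nabla_Xs+f\,Y(\nabla_Xs)$, and the third, via $[fX,Y]=f[X,Y]-Y(f)X$, becomes $\nabla_{[fX,Y]}s=f\nabla_{[X,Y]}s-Y(f)\nabla_Xs$; the two copies of $Y(f)\nabla_Xs$ cancel, leaving $f\cdot R(X,Y)s$. For (4) I would expand $R(X,Y)(fs)$ by writing $\nabla_Y(fs)=Y(f)P(s)+f\nabla_Ys$ and then applying $X(\cdot)$ through the Leibniz rule (symmetrically for $Y$ and for $[X,Y]$). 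The purely second-order contributions $X(Yf)P(s)$, $Y(Xf)P(s)$ and $[X,Y](f)P(s)$ cancel because $[X,Y](f)=X(Yf)-Y(Xf)$. Regrouping the surviving terms produces $f\cdot R(X,Y)s$ together with $Y(f)\bigl(X(P(s))-\nabla_Xs\bigr)-X(f)\bigl(Y(P(s))-\nabla_Ys\bigr)$, which by (\ref{differential2}) and the definition $\partial\nabla=\partial P-\nabla$ equals $Y(f)\,\partial\nabla(X,s)-X(f)\,\partial\nabla(Y,s)$; since $\partial\nabla$ is $C^\infty(M)$-bilinear and $df(X,Y)=Y(f)X-X(f)Y$ by (\ref{df}), this is exactly $\partial\nabla(df(X,Y),s)$, proving (4).

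Item (5) is where the genuine work lies. I would form the cyclic sum $R(X,Y)Z+R(Y,Z)X+R(Z,X)Y$ and regroup the six ``outer-derivative'' terms by the vector field that differentiates: the $X$-outer pair is $X(\nabla_YZ)-X(\nabla_ZY)=X\bigl(\nabla_YZ-\nabla_ZY\bigr)$, which by torsion-freeness ($\nabla_AB-\nabla_BA=P([A,B])$) equals $X(P[Y,Z])=\partial P_X([Y,Z])$, and likewise for the $Y$- and $Z$-outer pairs; this yields
\[
\begin{aligned}
R(X,Y)Z+R(Y,Z)X+R(Z,X)Y={}&\partial P_X([Y,Z])+\partial P_Y([Z,X])+\partial P_Z([X,Y])\\
&-\nabla_{[X,Y]}Z-\nabla_{[Y,Z]}X-\nabla_{[Z,X]}Y .
\end{aligned}
\]
Because $\partial\nabla=\partial P-\nabla$, the claimed right-hand side $\partial\nabla(X,[Y,Z])+\partial\nabla(Y,[Z,X])+\partial\nabla(Z,[X,Y])$ differs from this only by replacing $\nabla_{[X,Y]}Z$ by $\nabla_Z([X,Y])$, etc., so (5) is equivalent to $\bigl(\nabla_X[Y,Z]-\nabla_{[Y,Z]}X\bigr)+\bigl(\nabla_Y[Z,X]-\nabla_{[Z,X]}Y\bigr)+\bigl(\nabla_Z[X,Y]-\nabla_{[X,Y]}Z\bigr)=0$; applying torsion-freeness once more, each bracketed difference is $P$ of a double Lie bracket, and their sum vanishes by the Jacobi identity. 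The main obstacle is thus purely organizational in (5): identifying the correct pairing of the outer-derivative terms and using torsion-freeness twice — first to generate the $\partial P$-terms, then on the residual $\nabla$-terms — before the Jacobi identity closes the argument. Everything else is routine bookkeeping with the Leibniz rules, which is why the paper leaves it to the reader.
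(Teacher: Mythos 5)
Your proof is correct: each of the five identities checks out, and the key points (the Leibniz rule for $X(\varphi d)$, the $C^\infty(M)$-bilinearity of $\partial\nabla=\partial P-\nabla$ feeding into $\partial\nabla(df(X,Y),s)$ in item (4), and the double use of torsion-freeness plus the Jacobi identity in item (5)) are exactly the right ingredients. The paper explicitly leaves these "straightforward computations" to the reader, so your argument is the intended one, merely written out in full.
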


The first three properties listed above are the usual
ones of the classical curvature tensor.
The fourth one says that our curvature although not $C^\infty(M)$-linear
in the third variable
behaves like a pseudoconnection with principal part $\partial\nabla$
in such a variable.
The last property is nothing but a version of the classical Bianchi
identity in Riemannian geometry.

Now we explain how the curvature is modified by pullbacks.

\begin{prop}
\label{curvado}
Let $F:\xi\to\overline{\xi}$ be a bundle map
between vector bundles $\xi$ and $\overline{\xi}$ over
differentiable manifolds $M$ and $\overline{M}$
respectively.
Consider a map $\overline{\Phi}:\mathfrak{D}^k(\overline{M})\to\mathfrak{D}^k(M)$
satisfying {\bf (P1)} and
{\bf (P2)} of Proposition \ref{pull} and also the following additional property,
\begin{description}
\item[(P3)]
$X(\overline{\Phi}(\overline{d}))=\overline{\Phi}(f_*X(\overline{d}))$
for all $(X,\overline{d})\in\mathcal{X}(M)\times \mathfrak{D}^k(\overline{M})$, where $f:M\to\overline{M}$ is the diffeomorphism induced by $F$
in the base manifolds.
\end{description}
Then,
for all $\mathfrak{D}^k(\overline{M})$-pseudoconnection $\overline{\nabla}$
of $\overline{\xi}$ the curvature
$R^{\overline{\nabla}}$ of $\overline{\nabla}$
and the curvature $R^{(F,\overline{\Phi})^*(\overline{\nabla})}$
of the pullback $(F,\overline{\Phi})^*(\overline{\nabla})$ satisfy
$$
R^{(F,\overline{\Phi})^*(\overline{\nabla})}=(F,\overline{\Phi})^*(R^{\overline{\nabla}}).
$$
\end{prop}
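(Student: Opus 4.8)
The plan is to unwind both sides of the claimed identity and match them term by term; the only genuine ingredients are property \textbf{(P3)} of $\overline{\Phi}$ together with the naturality of the Lie bracket under pushforward by a diffeomorphism. First I would record that, since $\overline{\Phi}$ satisfies \textbf{(P1)} and \textbf{(P2)}, Proposition \ref{pull} guarantees that $(F,\overline{\Phi})^*(\overline{\nabla})$ is a bona fide $\mathfrak{D}^k(M)$-pseudoconnection of $\xi$, so its curvature is defined via Definition \ref{curvature}; likewise $(F,\overline{\Phi})^*(R^{\overline{\nabla}})$ makes sense as the pullback of the map $R^{\overline{\nabla}}$, so both sides of the asserted equation are honest maps $\mathcal{X}(M)\times\mathcal{X}(M)\times\Omega^0(\xi)\to\mathfrak{D}^k(M)$.

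Next, fixing $X,Y\in\mathcal{X}(M)$ and $s\in\Omega^0(\xi)$, I would expand
$$
R^{(F,\overline{\Phi})^*(\overline{\nabla})}(X,Y)s=
X\big((F,\overline{\Phi})^*(\overline{\nabla})_Ys\big)-Y\big((F,\overline{\Phi})^*(\overline{\nabla})_Xs\big)-(F,\overline{\Phi})^*(\overline{\nabla})_{[X,Y]}s
$$
and substitute $(F,\overline{\Phi})^*(\overline{\nabla})_Zs=\overline{\Phi}(\overline{\nabla}_{f_*Z}F_*s)$ for $Z=Y,X,[X,Y]$. Applying \textbf{(P3)} to the first two summands pushes $\overline{\Phi}$ through the action of $X$ (resp. $Y$), turning $X\big(\overline{\Phi}(\overline{\nabla}_{f_*Y}F_*s)\big)$ into $\overline{\Phi}\big(f_*X(\overline{\nabla}_{f_*Y}F_*s)\big)$ and $Y\big(\overline{\Phi}(\overline{\nabla}_{f_*X}F_*s)\big)$ into $\overline{\Phi}\big(f_*Y(\overline{\nabla}_{f_*X}F_*s)\big)$. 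For the third summand I would invoke $f_*[X,Y]=[f_*X,f_*Y]$, which holds because the induced map $f$ is assumed to be a diffeomorphism and hence $f_*$ is a Lie-algebra homomorphism on vector fields; thus $\overline{\nabla}_{f_*[X,Y]}F_*s=\overline{\nabla}_{[f_*X,f_*Y]}F_*s$.

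At this stage all three terms sit inside $\overline{\Phi}$, so \textbf{(P1)} (together with $\overline{\Phi}(-\overline{d})=-\overline{\Phi}(\overline{d})$, which follows from \textbf{(P1)} since $\overline{\Phi}(0)=0$, or from \textbf{(P2)} with $\varphi\equiv-1$) collects them into
$$
\overline{\Phi}\Big(f_*X(\overline{\nabla}_{f_*Y}F_*s)-f_*Y(\overline{\nabla}_{f_*X}F_*s)-\overline{\nabla}_{[f_*X,f_*Y]}F_*s\Big)
=\overline{\Phi}\big(R^{\overline{\nabla}}(f_*X,f_*Y)(F_*s)\big),
$$
where the last equality is exactly Definition \ref{curvature} applied to $\overline{\nabla}$ at the vector fields $f_*X,f_*Y$ and the section $F_*s$. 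By the definition of the pullback of a map, the right-hand side is $(F,\overline{\Phi})^*(R^{\overline{\nabla}})(X,Y)s$, and since $X,Y,s$ were arbitrary this yields the desired equality of maps.

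I do not expect any real obstacle: the computation is purely formal, consisting of substituting definitions, moving $\overline{\Phi}$ across vector-field actions via \textbf{(P3)}, and regrouping via \textbf{(P1)}. The single point worth spelling out is the identity $f_*[X,Y]=[f_*X,f_*Y]$, i.e. the naturality of the bracket under the diffeomorphism $f$ induced by $F$ on base manifolds; everything else is forced by the hypotheses \textbf{(P1)}, \textbf{(P2)}, \textbf{(P3)} on $\overline{\Phi}$ and by the $\mathbb{R}$-trilinearity already present in the definition of the curvature.
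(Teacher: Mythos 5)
Your proposal is correct and follows essentially the same route as the paper's proof: unwind the definition of the pullback pseudoconnection, move $\overline{\Phi}$ across the vector-field actions via \textbf{(P3)}, use $f_*[X,Y]=[f_*X,f_*Y]$ on the bracket term, regroup with \textbf{(P1)}, and recognize $R^{\overline{\nabla}}$ inside $\overline{\Phi}$. The only difference is that you make explicit two points the paper uses silently (the naturality of the Lie bracket under $f_*$ and the fact that \textbf{(P1)} yields $\overline{\Phi}(-\overline{d})=-\overline{\Phi}(\overline{d})$), which is a welcome clarification but not a different argument.
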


\begin{proof}
This can be shown by a direct computation. Indeed,
\[
\begin{array}{ccl}
R^{(F,\overline{\Phi})^*(\overline{\nabla})}(X,Y)s
& = &
X((F,\overline{\Phi})^*(\overline{\nabla})_Ys)-
Y((F,\overline{\Phi})^*(\overline{\nabla})_Xs)-
(F,\overline{\Phi})^*(\overline{\nabla})_{[X,Y]}s)\\
& &
\\
& = &
X(\overline{\Phi}(\overline{\nabla}_{f_*Y}F_*s))-
Y(\overline{\Phi}(\overline{\nabla}_{f_*X}F_*s))-
\overline{\Phi}(\overline{\nabla}_{f_*[X,Y]}F_*s)\\
& &
\\
& \overset{{\bf (P3)}}{=} &
\overline{\Phi}(f_*X(\overline{\nabla}_{f_*Y}F_*s))-
\overline{\Phi}(f_*Y(\overline{\nabla}_{f_*X}F_*s))-
\overline{\Phi}(\overline{\nabla}_{[f_*X,f_*Y]}F_*s)\\
& &
\\
& \overset{{\bf (P1)}}{=} &
\overline{\Phi}(f_*X(\overline{\nabla}_{f_*Y}F_*s)-
f_*Y(\overline{\nabla}_{f_*X}F_*s)-
\overline{\nabla}_{[f_*X,f_*Y]}F_*s)\\
& &
\\
& = &
\overline{\Phi}(R^{\overline{\nabla}}(f_*X,f_*Y)F_*s)\\
& &
\\
& = &
(F,\overline{\Phi})^*(R^{\overline{\nabla}})(X,Y)s.
\end{array}
\]
\end{proof}

\subsection{Symmetry and compatibility}
Fix a differentiable manifold $M$ and an integer $k\geq 1$.
A $\mathfrak{D}^k(M)$-pseudoconnection $\nabla$ of $TM$ is
{\em compatible} with a homomorphism $P:\mathcal{X}\to \mathfrak{D}^k(M)$ if
$$
XP(Y)(X_ 1,\cdots, X_k)=\nabla_XY(X_1,\cdots,X_k)+\nabla_XX_1(Y,X_2,\cdots,X_k),
$$
for all $X,Y,X_1,\cdots,X_k\in\mathcal{X}$.

{\em A $\mathfrak{D}^k_0(M)$-pseudoconnection is compatible only with
its own principal homomorphism}.
Indeed, suppose that $\nabla$ is a $\mathfrak{D}^k_0(M)$-pseudoconnection compatible with a homomorphism $P$.
Take $X,Y,Z\in \mathcal{X}$ and $f\in C^\infty(M)$.

On the one hand, as $\nabla$ is compatible with $P$ we have
\[
\begin{array}{ccl}
\nabla_XfY(X_1,\cdots,X_k)+\nabla_XX_1(fY,X_2,\cdots, X_k)
& = &
XP(fY)(X_1,\cdots,X_k)\\
& = &
X(fP(Y)(X_1,\cdots, X_k))\\
& = &
X(f)P(Y)(X_1,\cdots, X_k)+\\
& &
fXP(Y)(X_1,\cdots, X_k)\\
& = &
X(f)P(Y)(X_1,\cdots, X_k)+\\
& &
f\nabla_XY(X_1,\cdots, X_k)+\\
& &
f\nabla_XX_1(Y,X_2,\cdots, X_k).
\end{array}
\]
On the other hand, if $P^\nabla$ is the principal homomorphism of $\nabla$,
then
\[
\begin{array}{ccl}
\nabla_XfY(X_1,\cdots,X_k)+\nabla_XX_1(fY,X_2,\cdots, X_k)
& = &
X(f)P^\nabla(Y)(X_1,\cdots, X_k)+\\
& &
f\nabla_XY(X_1,\cdots, X_k)+\\
& &
f\nabla_XX_1(Y,X_2,\cdots, X_k).
\end{array}
\]
Then,
$$
X(f)P(Y)(X_1,\cdots, X_k)=X(f)P^\nabla(Y)(X_1,\cdots, X_k),
\forall X,Y,X_1,\cdots, X_k\in\mathcal{X},\varphi\in C^\infty(M),
$$
so we have $P=P^\nabla$.
It is by this reason that, in the case of $\mathfrak{D}^k_0(M)$-pseudoconnections $\nabla$ of $TM$, we shall say that $\nabla$ is compatible
without any reference to the homomorphism $P$.

On the other hand, a homomorphism $P:\mathcal{X}\to\mathfrak{D}^k(M)$ is {\em symmetric}
if
$$
P(X)(X_1,\cdots, X_k)=P(X_1)(X,X_2,\cdots, X_k),
\,\,\,\,\,\forall X,X_1,\cdots, X_k\in\mathcal{X}.
$$

Every homomorphism $P$ compatible with a $\mathfrak{D}^k(M)$-pseudoconnection $\nabla$ of $TM$ is symmetric.
For if $X,X_1,\cdots, X_k,Y\in\mathcal{X}$ then
\[
\begin{array}{ccl}
YP(X)(X_1,\cdots, X_k)
& = &
\nabla_YX(X_1,\cdots, X_k)+\nabla_YX_1(X,X_2,\cdots, X_k)\\
& = &
\nabla_YX_1(X,X_2,\cdots, X_k)+\nabla_YX(X_1,\cdots, X_k)\\
& = &
YP(X_1)(X,X_2,\cdots, X_k)
\end{array}
\]
and so $YP(X)(X_1,\cdots, X_k)=YP(X_1)(X,X_2,\cdots, X_k)$
therefore $P$ is symmetric since
$X,X_1,\cdots, X_k,Y\in\mathcal{X}$ are arbitrary.
In particular, the principal homomorphism of a compatible
$\mathfrak{D}^k_0(M)$-pseudoconnection  of $TM$ is symmetric.
The converse is true also due to the following version
of the classical Levi-Civita Theorem in Riemannian geometry.

\begin{prop}
\label{levi-civita}
Every symmetric homomorphism $P:\mathcal{X}\to\mathfrak{D}^k_0(M)$
is the principal homomorphism of a unique
torsion free compatible $\mathfrak{D}^k_0(M)$-pseudoconnection of $TM$.
\end{prop}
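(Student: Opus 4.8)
The plan is to mimic the classical proof of the Levi--Civita theorem via a Koszul-type formula, the only new feature being that the role of the metric is played by $P$, which now carries $k-1$ passive extra arguments. Explicitly, I propose as the desired pseudoconnection the $\mathbb{R}$-bilinear map $\nabla$ given by
$$2\,\nabla_XY(X_1,\dots,X_k)=X\bigl(P(Y)(X_1,\dots,X_k)\bigr)+Y\bigl(P(X)(X_1,\dots,X_k)\bigr)-X_1\bigl(P(X)(Y,X_2,\dots,X_k)\bigr)$$
$$+\,P([X,Y])(X_1,\dots,X_k)-P([X,X_1])(Y,X_2,\dots,X_k)-P([Y,X_1])(X,X_2,\dots,X_k),$$
for all $X,Y,X_1,\dots,X_k\in\mathcal{X}$. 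This expression is global, so no gluing argument is needed.

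I would prove uniqueness first, since it is what dictates the formula. Let $\nabla$ be a torsion free $\mathfrak{D}^k_0(M)$-pseudoconnection with principal homomorphism $P$ which is compatible. Writing the compatibility identity for the three triples $(X;Y;X_1)$, $(Y;X;X_1)$ and $(X_1;X;Y)$, then adding the first two, subtracting the third, and eliminating $\nabla_YX$, $\nabla_{X_1}X$ and $\nabla_{X_1}Y$ by means of $\nabla_AB-\nabla_BA=P([A,B])$, one is left with exactly the displayed identity. Hence such a $\nabla$ is unique.

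For existence I would take the formula above as the definition of $\nabla$ and verify, in turn: (i) $\nabla_XY\in\mathfrak{D}^k_0(M)$, i.e.\ the right-hand side is $C^\infty(M)$-linear in $X_1$; (ii) $\nabla$ is $\mathbb{R}$-bilinear and $\nabla_{\varphi X}Y=\varphi\,\nabla_XY$; (iii) $\nabla_X(\varphi Y)=X(\varphi)P(Y)+\varphi\,\nabla_XY$, which makes $\nabla$ a $\mathfrak{D}^k_0(M)$-pseudoconnection with principal homomorphism $P$; (iv) $\nabla_XY-\nabla_YX=P([X,Y])$, i.e.\ torsion freeness; (v) compatibility, $X\bigl(P(Y)(X_1,\dots,X_k)\bigr)=\nabla_XY(X_1,\dots,X_k)+\nabla_XX_1(Y,X_2,\dots,X_k)$. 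Each is a direct computation: one replaces the relevant argument by $\varphi$ times itself (for (i)--(iii)), or combines the appropriate two instances of the defining formula (for (iv), (v)), then expands every Lie bracket by $[X,\varphi Z]=\varphi[X,Z]+X(\varphi)Z$, uses that $P$ is a $C^\infty(M)$-module homomorphism and each $P(Z)\in\mathfrak{D}^k_0(M)$ is $C^\infty(M)$-linear in its first slot, and collects. For instance, in (v) ten of the twelve terms of $2\,\nabla_XY(X_1,\dots,X_k)+2\,\nabla_XX_1(Y,X_2,\dots,X_k)$ cancel in pairs and the remaining two coincide by symmetry of $P$, yielding $2\,X\bigl(P(Y)(X_1,\dots,X_k)\bigr)$.

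The only steps requiring real attention --- and the place where the symmetry hypothesis on $P$ is indispensable --- are (i) and the Leibniz rule in (iii). When $X_1$ (respectively $Y$) is scaled by $\varphi$, the bracket terms $P([X,X_1])$ and $P([Y,X_1])$ generate anomalous contributions involving $X(\varphi)\,P(X_1)(Y,\dots)$ and $Y(\varphi)\,P(X_1)(X,\dots)$; symmetry of $P$ rewrites these as $X(\varphi)\,P(Y)(X_1,\dots)$ and $Y(\varphi)\,P(X)(X_1,\dots)$, which then exactly cancel the anomalous contributions coming from $X\bigl(P(Y)(X_1,\dots)\bigr)$ and $Y\bigl(P(X)(X_1,\dots)\bigr)$. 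Without symmetry the formula would not even land in $\mathfrak{D}^k_0(M)$, nor would $P$ be its principal homomorphism, so the statement is precisely the converse of the observation recorded just before it. The remaining identities (ii), (iv) and (v) are then routine bookkeeping.
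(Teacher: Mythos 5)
Your formula agrees with the paper's (your term $Y\bigl(P(X)(X_1,\dots,X_k)\bigr)$ is the paper's $YP(X_1)(X,X_2,\dots,X_k)$ via symmetry of $P$, and $P([X_1,X])=-P([X,X_1])$), your uniqueness derivation is the standard Koszul elimination the paper alludes to, and your existence checks — including the correct identification of exactly where symmetry of $P$ is needed to cancel the anomalous $X(\varphi)$-, $Y(\varphi)$- and $X_1(\varphi)$-terms — all go through. This is essentially the paper's proof, carried out in more detail than the paper itself provides.
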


\begin{proof}
The proof is similar to that of the Levi-Civita Theorem \cite{dcm} except that here we ignore the vector field components $(X_2,\cdots, X_k)$
of $(X_ 1,\cdots, X_k)$.
More precisely, the hypotheses of the proposition imply that if the desired
pseudoconnection $\nabla^P$ exists then it must satisfy
$$
\nabla^{P}_XY(X_1,\cdots, X_k)=\frac{1}{2}\{XP(Y)(X_1,\cdots, X_k)
+ YP(X_1)(X, X_2,\cdots, X_k)-
$$
$$
X_1P(X)(Y,X_2,\cdots, X_k)+
P([X,Y])(X_1,\cdots, X_k) +
$$
$$
P([X_1,X])(Y,X_2,\cdots, X_k) - P([Y,X_1])(X,
X_2,\cdots, X_k)\},
$$
for all $X,X_1,\cdots, X_k,Y\in \mathcal{X}$.
It is straightforward to check that such a
$\nabla^P$ satisfies the properties required in the proposition.
\end{proof}

In the particular case of type $(2,0)$ symmetric tensor fields
$h$ on $M$ we have the following example.

\begin{example}
\label{superex}
Every type $(2,0)$ tensor field $h$ of $M$ induces a homomorphism $P^h: \mathcal{X}\to\Omega^1(M)$, $P^h(X)(Y)=h(X,Y)$
for all $X,Y\in\mathcal{X}$.
If $h$ is symmetric then $P^h$ does hence, by Proposition \ref{levi-civita},
there is a unique torsion free compatible
$\Omega^1(M)$-pseudoconnection $\nabla^h$ of $TM$ with principal homomorphism
$P^h$ (note that $\Omega^1(M)=\mathfrak{D}^1_0(M)$).
\end{example}

We shall use this example later to motivate the definition of the $2$-Riemannian pseudoconnection associated to a $2$-Riemannian metric.

\subsection{Remarks on $\mathfrak{D}$-pseudoconnections with zero curvature}
The following result characterizes the $\mathfrak{D}^k_0(M)$-pseudoconnections
with zero curvature map and prescribed principal homomorphism.
Let $\xi$ be a vector bundle over a differentiable manifold $M$ and
let $k$ be an integer greater than $1$.

\begin{thm}
\label{ch-flat}
For every
homomorphism $P:\Omega^0(\xi)\to \mathfrak{D}_0^k(M)$
the differential $\partial P$ of $P$ in (\ref{differential2}) is
the unique $\mathfrak{D}^k_0(M)$-pseudoconnection of $\xi$ with
zero curvature and principal homomorphism $P$.
\end{thm}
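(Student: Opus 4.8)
The plan is to verify that $\partial P$ has the three asserted properties and then to show that any pseudoconnection sharing them must equal $\partial P$. First I would record that $\partial P$ is a pseudoconnection of $\xi$ with principal homomorphism $P$; the verification is identical to the one already sketched for $\partial\varphi$ in the text following \reff{differential}, namely $\partial P_{\varphi X}s=(\varphi X)(P(s))=\varphi\, X(P(s))=\varphi\,\partial P_Xs$ and, using that $P$ is $C^\infty(M)$-linear,
$$
\partial P_X(\varphi s)=X(P(\varphi s))=X(\varphi\, P(s))=X(\varphi)\,P(s)+\varphi\, X(P(s))=X(\varphi)\,P(s)+\varphi\,\partial P_Xs .
$$
That $\partial P$ has vanishing curvature is the Jacobi identity for the Lie bracket in disguise: evaluating
$$
R^{\partial P}(X,Y)s=X(Y(P(s)))-Y(X(P(s)))-[X,Y](P(s))
$$
on any $X_1,\dots,X_k\in\mathcal{X}$ gives $X(Y(h))-Y(X(h))-[X,Y](h)=0$ with $h:=P(s)(X_1,\dots,X_k)\in C^\infty(M)$.

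For uniqueness I would let $\nabla$ be a $\mathfrak{D}^k_0(M)$-pseudoconnection of $\xi$ with $R^\nabla=0$ and principal homomorphism $P$, and put $D:=\nabla-\partial P$. Since $\nabla$ and $\partial P$ carry the same principal homomorphism $P$, the map $D$ is $C^\infty(M)$-bilinear; since $X(\cdot)$ is additive on $\mathfrak{D}^k(M)$ the curvature is additive in the pseudoconnection, so from $R^\nabla=0$ and $R^{\partial P}=0$ we get
$$
X(D_Ys)-Y(D_Xs)-D_{[X,Y]}s=0
$$
for all $X,Y\in\mathcal{X}$ and $s\in\Omega^0(\xi)$. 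Replacing $s$ by $\varphi s$, expanding $X(\varphi\, D_Ys)=X(\varphi)\,D_Ys+\varphi\, X(D_Ys)$ (and the analogous identity in $Y$), and using that $D$ is $C^\infty(M)$-bilinear, the terms carrying an overall factor $\varphi$ vanish by the preceding display and what survives is the stronger relation
$$
X(\varphi)\,D_Ys=Y(\varphi)\,D_Xs
$$
for all $X,Y\in\mathcal{X}$, $\varphi\in C^\infty(M)$ and $s\in\Omega^0(\xi)$.

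To conclude I would fix $s\in\Omega^0(\xi)$ and vector fields $X_1,\dots,X_k$ and set $\theta(Z):=(D_Zs)(X_1,\dots,X_k)$, which is a genuine $1$-form on $M$ because $D_Zs$ is $C^\infty(M)$-linear in $Z$ and the operations on $\mathfrak{D}^k(M)$ are pointwise. Evaluating the last relation on $(X_1,\dots,X_k)$ reads $d\varphi(X)\,\theta(Y)=d\varphi(Y)\,\theta(X)$, i.e. $d\varphi\wedge\theta=0$ for every $\varphi\in C^\infty(M)$; as the differentials of smooth functions span each cotangent space, this forces $\theta\equiv 0$. Since $s$ and $X_1,\dots,X_k$ were arbitrary, $D=0$, that is $\nabla=\partial P$.

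I expect the only step needing a moment's care to be this last one, passing from ``$d\varphi\wedge\theta=0$ for all $\varphi$'' to ``$\theta=0$'': it is a pointwise linear-algebra remark (near a given point choose a function whose differential is not proportional to $\theta$ there), but it does use that enough functions are available — in effect, that $M$ has dimension at least $2$, the lower-dimensional situation being degenerate. One should also note that $D$, while $C^\infty(M)$-bilinear, need not take values in the submodule $\mathfrak{D}^k_0(M)$; this does not matter, since the argument invokes only the bilinearity of $D$ and the identity extracted from $R^\nabla=0$.
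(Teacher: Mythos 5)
Your proof is correct and follows essentially the same route as the paper's: the existence and flatness parts are the same computations, and your uniqueness step --- setting $D=\nabla-\partial P$, deriving $X(\varphi)D_Ys=Y(\varphi)D_Xs$, and concluding $D=0$ --- is exactly the paper's appeal to Theorem \ref{property-curvature}(4) (which gives $\partial\nabla(df(X,Y),s)=0$) combined with the surjectivity of $df$ on coordinate neighborhoods, just carried out by hand. Your closing caveat that the final step needs $\dim M\ge 2$ is well taken, since the paper's assertion that $df$ is onto over coordinate charts quietly requires the same hypothesis.
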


\begin{proof}
It is not difficult to prove that
$\partial P$ is a $\mathfrak{D}^k_0(M)$-pseudoconnection
with principal homomorphism $P$ of $\xi$.
On the other hand, for all $X,Y\in \mathcal{X}$, $d,d'\in \mathfrak{D}^k(M)$ and $\varphi\in C^\infty(M)$ one has
\begin{itemize}
\item
$(X+Y)d=X(d)+Y(d)$ and $X(d+d')=X(d)+X(d')$;
\item
$X(\varphi\cdot d)=X(\varphi)\cdot d+\varphi\cdot X(d)$;
\item
$[X,Y](d)=X(Y(d))-Y(X(d))$.
\end{itemize}
So,
\[
\begin{array}{ccl}
X(\partial P_Ys)-Y(\partial P_Xs)
& = &
X(Y(P(s)))-Y(X(P(s)))\\
& = &
[X,Y](P(s))\\
& = &
\partial P_{[X,Y]}s,
\,\,\,\,\,\,\forall (X,Y,s)\in \mathcal{X}\times\mathcal{X}\times\Omega^0(\xi).
\end{array}
\]
Therefore $\partial P$ has zero curvature map.

Now suppose that $\nabla$ is another
$\mathfrak{D}_0^k(M)$-pseudoconnection of $\xi$ with both zero curvature and principal homomorphism $P$.
Then we have
$$
\partial\nabla(df(X,Y),s)=0,
\,\,\,\,\,\,\,\forall (X,Y,s)\in \mathcal{X}\times\mathcal{X}\times\Omega^0(\xi),\forall f\in
C^\infty(M)
$$
by Theorem \ref{property-curvature}-(4).
But $df:\mathcal{X}\times\mathcal{X}\to\mathcal{X}$ is
onto when restricted to any coordinate neighborhood of $M$.
As $\partial\nabla$ is a tensor we conclude that
$\partial\nabla=0$ which is equivalent to $\nabla=\partial P$.
\end{proof}

\begin{example}
\label{exxxx}
Consider the fundamental $\mathfrak{D}^k(M)$-pseudoconnection
$\nabla^{(\varphi,d,\omega_d)}$ where $\varphi\in\Omega^1(\xi,\mathfrak{D}^k(M))$ and $\omega_d$ is the evaluation form
associated to $d\in\mathfrak{D}^k(M)$ (see Example \ref{exxx}).
Such a $\mathfrak{D}^k(M)$-pseudoconnection has zero curvature since
$\omega_d$ is exact (i.e. has zero differential) for all $d\in\mathfrak{D}^k(M)$.
Therefore $\nabla^{(\varphi,d,\omega_d)}=\partial P^{\nabla^{(\varphi,d,\omega_d)}}$
by Theorem \ref{ch-flat}.
This last identity can be proved also with a direct computation.
\end{example}

Related to this example we have the following corollary.

\begin{clly}
The fundamental $\mathfrak{D}^k(M)$-pseudoconnection
$\nabla^{(\varphi,d,\omega)}$ of $\xi$ has
zero curvature if and only if $\varphi\cdot \omega=\varphi\cdot \omega_d$.
\end{clly}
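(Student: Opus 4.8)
The plan is to rephrase both conditions in terms of the single $\mathfrak{D}^k(M)$‑valued $1$‑form $\eta:=\omega-\omega_d$ (here $\omega_d(X)=X(d)$ is the evaluation form of Example \ref{exxx}), and to show that each of them amounts to $\nabla^{(\varphi,d,\omega)}=\nabla^{(\varphi,d,\omega_d)}$. The ``if'' direction is immediate: if $\varphi\cdot\omega=\varphi\cdot\omega_d$, then by (\ref{fundamental}) one has $\nabla^{(\varphi,d,\omega)}=\partial\varphi\cdot d+\varphi\cdot\omega=\partial\varphi\cdot d+\varphi\cdot\omega_d=\nabla^{(\varphi,d,\omega_d)}$, which has zero curvature by Example \ref{exxxx}.

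For the converse I would first reduce the curvature to a manageable form. By (\ref{fundamental}), $\nabla^{(\varphi,d,\omega)}-\nabla^{(\varphi,d,\omega_d)}=\varphi\cdot\omega-\varphi\cdot\omega_d=\varphi\cdot\eta$, and since the curvature of Definition \ref{curvature} is additive in the pseudoconnection (clear from that definition, as $X(d+d')=X(d)+X(d')$ on $\mathfrak{D}^k(M)$) and $R^{\nabla^{(\varphi,d,\omega_d)}}=0$ by Example \ref{exxxx}, we obtain $R^{\nabla^{(\varphi,d,\omega)}}=R^{\varphi\cdot\eta}$. A direct computation — or Example \ref{exxx} applied with $d$ replaced by $0$, so that $\omega_0=0$ and $\varphi\cdot\eta=\nabla^{(\varphi,0,\eta)}$ — then gives, for all $X,Y\in\mathcal{X}$ and $s\in\Omega^0(\xi)$,
$$
R^{\varphi\cdot\eta}(X,Y)s=X(\varphi(s))\,\eta(Y)-Y(\varphi(s))\,\eta(X)+\varphi(s)\,d\eta(X,Y)=d\bigl(\varphi(s)\,\eta\bigr)(X,Y),
$$
where $\varphi(s)\,\eta\in\Omega^1(M,\mathfrak{D}^k(M))$ denotes the $1$‑form $X\mapsto\varphi(s)\,\eta(X)$ and $d$ is the differential of $\mathfrak{D}^k(M)$‑valued $1$‑forms from Example \ref{exxx}. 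Hence $R^{\nabla^{(\varphi,d,\omega)}}=0$ says exactly that $\varphi(s)\,\eta$ is a closed $1$‑form for every $s\in\Omega^0(\xi)$, and what remains is to deduce that in fact $\varphi(s)\,\eta=0$ for every $s$, which is equivalent to $\varphi\cdot\omega=\varphi\cdot\omega_d$.

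This last implication I would prove pointwise, distinguishing points of $M$ according to the zero set of $\varphi$. Fix $p\in M$. If $\varphi(s)(p)=0$ for every $s\in\Omega^0(\xi)$, then $\varphi(s)\,\eta$ trivially vanishes at $p$ for all $s$. Otherwise, choose $s'$ with $\varphi(s')(p)\neq0$ and a local frame $e_1,\dots,e_m$ of $\xi$ on a chart $W\ni p$; writing $s'$ in this frame shows $\varphi(s')(p)$ is a combination of the numbers $\varphi(e_j)(p)$, so $\varphi(e_j)\neq0$ on a smaller $W$ for some $j$. A standard bump‑function/locality argument then produces, for each $q\in W$, global sections $s_0,s_1,\dots,s_n$ with $\varphi(s_0)=1$ and $\varphi(s_i)=x^i$ near $q$. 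Feeding these into the closedness $d(\varphi(s)\,\eta)=0$ yields $d\eta=0$ near $q$, whence $d(x^i\,\eta)(X,Y)=X(x^i)\,\eta(Y)-Y(x^i)\,\eta(X)=0$ near $q$; evaluating at coordinate vector fields gives $\eta(\partial/\partial x^j)=0$ for $j\neq i$, and letting $i$ vary forces $\eta=0$ near $q$. Thus $\eta=0$ near $p$, so $\varphi(s)\,\eta$ vanishes at $p$ for all $s$. As $p$ was arbitrary, $\varphi(s)\,\eta=0$ for all $s$, i.e. $\varphi\cdot\omega=\varphi\cdot\omega_d$.

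The main obstacle is precisely this last step: for an arbitrary $1$‑form $\eta$ the condition ``$\varphi(s)\,\eta$ closed for all $s$'' does not force ``$\varphi(s)\,\eta=0$ for all $s$'' (it fails already when $\dim M=1$, which is why the argument uses $\dim M\geq2$), so one must exploit both that $\eta$ is the \emph{same} $1$‑form for every $s$ and that, away from the zero set of $\varphi$, the functions $\varphi(s)$ locally exhaust $C^\infty$; the only non‑formal ingredient is the bump‑function argument realizing prescribed functions near a point as values of $\varphi$.
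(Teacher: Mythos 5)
Your proof is correct, and the ``if'' direction is identical to the paper's. For the converse, however, you take a genuinely different and more self-contained route. The paper simply invokes its Theorem \ref{ch-flat}: zero curvature plus principal homomorphism $\varphi\cdot d$ forces $\nabla^{(\varphi,d,\omega)}=\partial(\varphi\cdot d)=\partial\varphi\cdot d+\varphi\cdot\omega_d$, and comparing with the definition $\nabla^{(\varphi,d,\omega)}=\partial\varphi\cdot d+\varphi\cdot\omega$ gives $\varphi\cdot\omega=\varphi\cdot\omega_d$ in two lines. You instead subtract off the flat pseudoconnection $\nabla^{(\varphi,d,\omega_d)}$, identify the curvature with $d(\varphi(s)\,\eta)$ for $\eta=\omega-\omega_d$ (which agrees with the formula of Example \ref{exxx}), and then kill $\varphi\cdot\eta$ by a hands-on bump-function and coordinate argument. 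The underlying mechanism is the same in both cases --- the uniqueness half of Theorem \ref{ch-flat} rests on Theorem \ref{property-curvature}-(4) together with the local surjectivity of $(X,Y)\mapsto df(Y)X-df(X)Y$, which is exactly what your computation $d(x^i\eta)(\partial_i,\partial_j)=\eta(\partial_j)$ exploits --- but your version buys two things: it makes explicit the dimension hypothesis $\dim M\geq 2$ without which the statement is false (on a $1$-manifold the curvature of $\varphi\cdot\eta$ vanishes identically), and it avoids a small mismatch in the paper, namely that Theorem \ref{ch-flat} is stated for $\mathfrak{D}^k_0(M)$-pseudoconnections while the corollary concerns general $\mathfrak{D}^k(M)$-pseudoconnections, so the citation strictly requires a remark that the relevant properties extend. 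The price is length: your final reduction (closedness of $\varphi(s)\,\eta$ for all $s$ implies vanishing) essentially reproves the special case of Theorem \ref{ch-flat} that the paper reuses wholesale.
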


\begin{proof}
If $\varphi\cdot\omega=\varphi\cdot\omega_d$, then
$$
\nabla^{(\varphi,d,\omega)}=
\partial\varphi\cdot d+\varphi\cdot\omega=
\partial\varphi\cdot d+\varphi\cdot\omega_d=
\nabla^{(\varphi,d,\omega_d)}
$$
thus $\nabla^{(\varphi,d,\omega)}$ has zero curvature
by Example \ref{exxxx}.

Conversely, if $\nabla^{(\varphi,d,\omega)}$ has zero curvature
then $\nabla^{(\varphi,d,\omega)}=\partial P^{\nabla^{(\varphi,d,\omega)}}$ by Theorem \ref{ch-flat}.
As
$\partial(\varphi\cdot d)=\partial\varphi\cdot d+\varphi\cdot\omega_d$
for all $(\varphi,d)\in\Omega^1(\xi,C^\infty(M))\times \mathfrak{D}^k(M)$ we have $
\nabla^{(\varphi,d,\omega)}=\partial\varphi\cdot d+\varphi\cdot\omega_d$
since
$P^{\nabla^{(\varphi,d,\omega)}}=\varphi\cdot d$.
But $
\nabla^{(\varphi,d,\omega)}=\partial\varphi\cdot d+\varphi\cdot
\omega$
by definition, so $\varphi\cdot\omega=\varphi\cdot\omega_d$.
\end{proof}

Another corollary is the following.

\begin{clly}
\label{never-vanish}
$\nabla=0$ is the unique compatible
$\mathfrak{D}_0^k(M)$-pseudoconnection with zero curvature of $\xi$.
\end{clly}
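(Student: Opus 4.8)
The plan is to combine Theorem~\ref{ch-flat} with the compatibility identity. First I would check existence: the zero map $\nabla=0$ has principal homomorphism $P=0$, its curvature is $R(X,Y)s=X(0)-Y(0)-0=0$, and the compatibility relation reduces to $0=0+0$; hence $\nabla=0$ is a compatible $\mathfrak{D}^k_0(M)$-pseudoconnection with zero curvature (of $TM$, since compatibility was only defined for the tangent bundle). It remains to show it is the only one.

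So let $\nabla$ be an arbitrary compatible $\mathfrak{D}^k_0(M)$-pseudoconnection with zero curvature and let $P$ be its principal homomorphism; recall that a $\mathfrak{D}^k_0(M)$-pseudoconnection is compatible only with its own principal homomorphism $P$. Since $\nabla$ has zero curvature and principal homomorphism $P$, Theorem~\ref{ch-flat} yields $\nabla=\partial P$, so that, by the definition of $\partial P$,
$$\nabla_XY(X_1,\cdots,X_k)=X\big(P(Y)(X_1,\cdots,X_k)\big)$$
for all $X,Y,X_1,\cdots,X_k\in\mathcal{X}$.

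Now I would substitute $\nabla=\partial P$ into the compatibility identity
$$XP(Y)(X_1,\cdots,X_k)=\nabla_XY(X_1,\cdots,X_k)+\nabla_XX_1(Y,X_2,\cdots,X_k).$$
The first term on the right-hand side equals $X\big(P(Y)(X_1,\cdots,X_k)\big)$, which is precisely the left-hand side, so the identity collapses to
$$X\big(P(X_1)(Y,X_2,\cdots,X_k)\big)=0,\qquad\forall X,Y,X_1,\cdots,X_k\in\mathcal{X}.$$
Since this holds for every $X\in\mathcal{X}$, the function $P(X_1)(Y,X_2,\cdots,X_k)$ is locally constant. The final — and essentially only — point requiring an argument is to upgrade this to vanishing: because $P$ is a $C^\infty(M)$-homomorphism, replacing $X_1$ by $\varphi X_1$ in the last display shows that $\varphi\cdot P(X_1)(Y,X_2,\cdots,X_k)$ is locally constant for every $\varphi\in C^\infty(M)$, and choosing $\varphi$ non-constant on any component of $M$ where $P(X_1)(Y,X_2,\cdots,X_k)$ would be a nonzero constant gives a contradiction. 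Hence $P=0$, and therefore $\nabla=\partial P=0$. I expect every step except this last upgrade to be a mechanical substitution; the upgrade is the only place where the $C^\infty(M)$-module structure is genuinely used.
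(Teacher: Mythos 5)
Your proof is correct and follows the paper's argument: apply Theorem \ref{ch-flat} to get $\nabla=\partial P$ and substitute into the compatibility identity, which collapses to $\nabla_XX_1(Y,X_2,\cdots,X_k)=0$ for all $X,X_1,Y,X_2,\cdots,X_k\in\mathcal{X}$. The only difference is your final detour: that identity already says $\nabla=0$ outright (just rename the arguments), so rewriting it as $X\bigl(P(X_1)(Y,X_2,\cdots,X_k)\bigr)=0$ and then forcing $P=0$ via the locally-constant argument is valid but unnecessary.
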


\begin{proof}
Let $\nabla$ be a
$\mathfrak{D}_0^k(M)$-pseudoconnection of $\xi$ with zero curvature.
By Theorem \ref{ch-flat} we have that if $P$ is the principal homomorphism
of $\nabla$ then
$$
\nabla_XY=X(P(Y)),
\,\,\,\,\,\,\,\,\forall X,Y\in\mathcal{X}.
$$
Thus, if $\nabla$ were compatibility we would have
\[
\begin{array}{ccl}
\nabla_XY(X_1,\cdots, X_k)
& = &
XP(Y)(X_1,\cdots, X_k)\\
& = &
\nabla_XY(X_1,\cdots, X_k)+\nabla_XX_1(Y,X_2,\cdots, X_k).
\end{array}
\]
Therefore $\nabla_XX_1(Y,X_2,\cdots, X_k)=0$ for all $X,X_1,\cdots, X_k,Y\in \mathcal{X}$ so $\nabla=0$.
\end{proof}

\subsection{The Koszul derivative}
There are cases where a
$\mathfrak{D}$-pseudoconnection $\nabla$ with principal
homomorphism $P$ of a vector bundles $\xi$
over a manifold $M$ admits a factorization,
$$
\nabla=P\circ D,
$$
for some map
$D:\mathcal{X}\times\mathcal{X}\to\mathcal{X}$.
In such a case we shall say that $D$ is a
{\em Koszul derivative}
of $\nabla$.
This definition can be found in \cite{k} but when $\mathfrak{D}=\Omega^1(M)$ and $P$
has the form $P(X)(Y)=\Phi(X,Y)$ for some map $\Phi:\mathcal{X}\times\mathcal{X}\to C^\infty(M)$.

\begin{example}
\label{example1}
A $\mathfrak{D}$-pseudoconnection $\nabla$ of the trivial line bundle
$\epsilon^1_M$ has Koszul derivatives if and only if
there are $\tau\in \Omega^1(M)$ and $A\in \mathfrak{D}$ such that
$$
\nabla_Xf=(X(f)+f\cdot\tau(X))\cdot A,
\,\,\,\,\,\,\,\forall (X,f)\in \mathcal{X}\times \Omega^0(\epsilon^1_M).
$$
In such a case there is a unique Koszul derivative $D:\mathcal{X}\times\Omega^0(\epsilon_M^1)\to\Omega^0(\epsilon_M^1)$ defined by
$D_Xf=X(f)+f\cdot \tau(X)$.
\end{example}

Let us prove the above assertion.
If $\nabla$ has the desired form then $P(f)=f\cdot A$ is its principal homomorphism
hence $D$ as above is a Koszul derivative of $\nabla$.
Conversely, suppose that $\nabla$ has a Koszul derivative $D$.
We know from Example \ref{ex1} that $\nabla$ has the form,
$$
\nabla_Xf=X(f)\cdot A+f\cdot\omega(X),
$$
for some $A\in \mathfrak{D}$ and some $\mathfrak{D}$-valued $1$-form
$\omega:\mathcal{X}\to\mathfrak{D}$.
In such a case $P(f)=fA$ is the principal homomorphism of $\nabla$
which is clearly injective since $C^\infty(M)$ is an unitary
ring.
Set $\tau(X)=D_X1$ where $1$ here is the constant map $p\mapsto 1$.
As $P$ is injective we have that $\tau\in\Omega^1(M)$.
The following computation
$$
\tau(X)\cdot A
=
P(\tau(X))
=
P(D_X1)
=
\nabla_X1
=
X(1)\cdot A+ 1\cdot\omega(X)
=
\omega(X)
$$
shows that $\omega(X)=\tau(X)\cdot A$. Replacing in the expression of $\nabla$ above we get
the desired form for $\nabla$.

\begin{example}
In Example \ref{example1} if $\mathfrak{D}$ has two distinct elements $d_1,d_2$ and $\omega\in \Omega^1(M)$ is non-zero,
then $ \nabla:\mathcal{X}\times\Omega^0(\epsilon_M^1)\to \mathfrak{D}$
defined by $\nabla_Xf=X(f)\cdot d_1+f\cdot\omega(X)\cdot d_2$
is a $\mathfrak{D}$-pseudoconnection of $\epsilon_M^1$ without
Koszul derivatives.
\end{example}

These examples motivate the question whether Koszul derivatives
exist for a given pseudoconnection $\nabla$.
Obviously a necessary condition for the
existence of such a derivative is that
$Im(\nabla)\subset Im(P)$, where $Im(\cdot)$ stands for the image operator.
It is also obvious that $Im(\nabla)\subset Im(P)$ is a
sufficient condition when $P$ is injective
and, in such a case, $D=P^{-1}\circ \nabla$ is the unique
Koszul derivative of $\nabla$.
However it may happen not only that
$Im(\nabla)\not\subset Im(P)$ but also that $Im(\nabla)\cap Im(P)=\{0\}$
(see for instance Theorem \ref{new-proof}).
All of this show the relationship between existence of Koszul derivatives and the injectivity of the principal homomorphism.
Let us give two short results exploring further such a relation.
Hereafter we denote by $Ker(\cdot)$ the kernel operation.

\begin{prop}
If $D$ is a Koszul derivative of a pseudoconnection with principal homomorphism $P$ of a vector bundle $\xi$ over $M$, then
$D_{X+Y}s-D_Xs-D_Ys,D_{fX}s-fD_Xs,D_X(s+ s')-D_X-D_Xs',
D_X(fs)-X(f)s-fD_Xs\in Ker(P)$
for all $X,Y\in \mathcal{X}$, $s,s'\in \Omega^0(\xi)$, $f\in C^\infty(M)$.
In particular, if $P$ is injective, then $D$ is not only unique
but also an ordinary connection of $\xi$.
\end{prop}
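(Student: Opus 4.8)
The plan is to read everything off the single identity $\nabla_Xs=P(D_Xs)$, combined with the structural properties of $\nabla$ as a $\mathfrak{D}$-pseudoconnection and of $P$ as a $C^\infty(M)$-homomorphism. Since $\nabla$ is $\mathbb{R}$-bilinear we have $\nabla_{X+Y}s=\nabla_Xs+\nabla_Ys$ and $\nabla_X(s+s')=\nabla_Xs+\nabla_Xs'$; because $P$ is additive the right-hand sides equal $P(D_Xs+D_Ys)$ and $P(D_Xs+D_Xs')$, so $D_{X+Y}s-D_Xs-D_Ys\in Ker(P)$ and $D_X(s+s')-D_Xs-D_Xs'\in Ker(P)$. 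Similarly $\nabla_{fX}s=f\nabla_Xs=f\,P(D_Xs)=P(fD_Xs)$ using $C^\infty(M)$-linearity of $P$, so $D_{fX}s-fD_Xs\in Ker(P)$.

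For the Leibniz term I would use the defining property of the principal homomorphism together with $C^\infty(M)$-linearity of $P$: $\nabla_X(fs)=X(f)P(s)+f\nabla_Xs=X(f)P(s)+f\,P(D_Xs)=P\bigl(X(f)s\bigr)+P\bigl(fD_Xs\bigr)=P\bigl(X(f)s+fD_Xs\bigr)$, whence $D_X(fs)-X(f)s-fD_Xs\in Ker(P)$. This settles all four membership claims; only elementary algebra in $\Omega^0(\xi)$ and $\mathfrak{D}$ is involved, the only delicate point being to invoke $C^\infty(M)$-linearity (not merely additivity) of $P$ at the two places where a function is moved across $P$.

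For the final assertion, assume $P$ injective, so $Ker(P)=\{0\}$ and each of the four displayed elements vanishes identically. Vanishing of the first three makes $D$ additive in each variable and $C^\infty(M)$-homogeneous in the vector-field variable; taking $f$ constant in the now-exact Leibniz identity $D_X(fs)=X(f)s+fD_Xs$ gives $\mathbb{R}$-homogeneity in the section variable as well, so $D$ is $\mathbb{R}$-bilinear. Together with $D_{fX}s=fD_Xs$ and $D_X(fs)=X(f)s+fD_Xs$, these are precisely the axioms of an ordinary connection of $\xi$. Uniqueness is then immediate: if $D$ and $D'$ are Koszul derivatives of $\nabla$, then $P(D_Xs-D'_Xs)=\nabla_Xs-\nabla_Xs=0$ for every $(X,s)$, hence $D=D'$ by injectivity of $P$. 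I do not anticipate a genuine obstacle here; the proof is a direct transport of identities through $P$, and the only thing to watch is the correct bookkeeping of which steps use $C^\infty(M)$-linearity of $P$ versus mere additivity, and the remark that "ordinary connection" requires only the two Leibniz/tensoriality axioms, the full $\mathbb{R}$-bilinearity being a consequence.
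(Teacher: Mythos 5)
Your proof is correct, and it is exactly the routine verification the paper leaves implicit (the proposition is stated without proof): transport each axiom of the pseudoconnection through the identity $\nabla_Xs=P(D_Xs)$, using $C^\infty(M)$-linearity of $P$ where a function crosses it. The deduction of the final assertion from $Ker(P)=\{0\}$, including the uniqueness argument, is also complete and correct.
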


\begin{prop}
Every pseudoconnection of $TM$ having a unique Koszul derivative
has injective principal homomorphism.
In particular, such a Koszul derivative is an ordinary connection of $TM$.
\end{prop}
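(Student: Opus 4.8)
The plan is to establish the contrapositive of the first assertion: if the principal homomorphism $P$ of a pseudoconnection $\nabla$ of $TM$ is not injective, then $\nabla$ does not have a unique Koszul derivative. The point to keep in mind is that a Koszul derivative of $\nabla$ is, by definition, nothing more than a map $D\colon\mathcal{X}\times\mathcal{X}\to\mathcal{X}$ with $\nabla=P\circ D$; no $\mathbb{R}$-bilinearity or Leibniz-type rule is imposed in advance (this is precisely what the preceding Proposition quantifies). Hence any map $K\colon\mathcal{X}\times\mathcal{X}\to\mathcal{X}$ taking values in $Ker(P)$ can be added to a Koszul derivative of $\nabla$ to produce another one.

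Concretely, I would proceed as follows. Suppose $\nabla$ admits a Koszul derivative $D$ and that $P$ is not injective; pick $Y_0\in\mathcal{X}$ with $Y_0\neq 0$ and $P(Y_0)=0$. Define $\widetilde D\colon\mathcal{X}\times\mathcal{X}\to\mathcal{X}$ by $\widetilde D_XY=D_XY+Y_0$ for all $X,Y\in\mathcal{X}$. Then $P(\widetilde D_XY)=P(D_XY)+P(Y_0)=\nabla_XY$ for all $X,Y$, so $\widetilde D$ is again a Koszul derivative of $\nabla$, while $\widetilde D_XY-D_XY=Y_0\neq 0$ shows $\widetilde D\neq D$. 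This contradicts uniqueness, and therefore $P$ must be injective. Here the hypothesis is used in its full strength: ``unique'' presupposes existence of at least one Koszul derivative, and the perturbation argument bites only against that background.

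For the last sentence I would simply invoke the preceding Proposition with $\xi=TM$: once $P$ is injective, each of the difference terms appearing there (those measuring the failure of $\mathbb{R}$-bilinearity, of $C^\infty(M)$-linearity in the first slot, and of the Leibniz rule in the second slot) lies in $Ker(P)=\{0\}$ and hence vanishes, which is exactly the statement that $D$ is an ordinary connection of $TM$. There is no genuine technical obstacle in any of this; the only thing one must notice — and this is the whole content of the argument — is that the notion of Koszul derivative carries no rigidity, so that a single nonzero element of $Ker(P)$ already yields a second, distinct Koszul derivative.
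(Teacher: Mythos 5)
Your argument is correct under the paper's stated definition of Koszul derivative (an arbitrary map $D\colon\mathcal{X}\times\mathcal{X}\to\mathcal{X}$ with $\nabla=P\circ D$), and it reaches the same conclusion by a genuinely different, more elementary perturbation. The paper instead perturbs by a \emph{tensorial} term: it fixes $Z\in Ker(P)$, chooses an auxiliary Riemannian metric $h$ on $M$, sets $T(X,Y)=h(X,Y)\cdot Z$, observes that $D+T$ is again a Koszul derivative, and concludes $T=0$ hence $Z=0$ by uniqueness. Your constant shift $\widetilde D_XY=D_XY+Y_0$ avoids the auxiliary metric entirely and is the minimal witness of non-uniqueness; what the paper's $C^\infty(M)$-bilinear choice buys is robustness: since $T$ is tensorial, $D+T$ inherits every algebraic property of $D$ (additivity, $C^\infty(M)$-linearity in the first slot, the Leibniz rule in the second), so the paper's proof would survive even under a stricter reading in which Koszul derivatives are required to be connection-like, whereas your $\widetilde D$ visibly violates the Leibniz rule ($\widetilde D_X(fY)-X(f)Y-f\widetilde D_XY=(1-f)Y_0$) and so would not. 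Under the literal definition, which the preceding Proposition confirms (it derives, rather than assumes, that these defects lie in $Ker(P)$), both arguments are valid, and your treatment of the final assertion via $Ker(P)=\{0\}$ in that Proposition is exactly what the paper intends.
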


\begin{proof}
Let $\nabla$ be a pseudoconnection of $TM$ having a unique Koszul
derivative $D$.
If $P$ is the principal homomorphism and $T: \mathcal{X}\times\mathcal{X}\to Ker(P)$ is $C^\infty(M)$-bilinear,
then $P(D_XY+T(X,Y))=\nabla_XY$, for all $X,Y\in \mathcal{X}$.
Therefore $D+T$ is also a Koszul derivative of $\nabla$, and so,
$T=0$ by the uniqueness.
Now, let $Z\in Ker(P)$ and $h$ be a Riemannian metric of $M$.
Then, $T$ defined by $T(X,Y)=h(X,Y)\cdot Z$, $\forall X,Y\in\mathcal{X}$,
is $C^\infty(M)$-bilinear with values in $Ker(P)$.
Then, $T=0$ and so $Z=0$. Therefore $Ker(P)=0$ and so $P$ is injective.
\end{proof}

Now suppose that $\nabla$ is a compatible $\mathfrak{D}^k(M)$-pseudoconnection of $TM$ with
injective principal homomorphism $P$.
Suppose in addition that $\nabla$ has a Koszul derivative $\theta$.
It follows that $\theta$ is an ordinary connection of $TM$ hence
$\theta$ has a curvature tensor $R^\theta$ defined in (\ref{ordinary}).
Let us express the curvature of $\nabla$, $R^\nabla$,
as a function of $R^\theta$:
\[
\begin{array}{ccl}
R^\nabla(X,Y)Z(X_1,\cdots, X_k)
& = &
X(\nabla_YZ(X_1,\cdots, , X_k))-\\
& &
Y(\nabla_XZ(X_1,\cdots, , X_k))-
\nabla_{[X,Y]}Z(X_1,\cdots, X_k)\\
& = &
X(P(\theta_YZ)(X_1,\cdots, , X_k))-\\
& &
Y(P(\theta_XZ)(X_1,\cdots, , X_k))-\\
& &
\nabla_{[X,Y]}Z(X_1,\cdots, X_k)\\
& = &
(\nabla_X\theta_YZ-\nabla_Y\theta_XZ-
\nabla_{[X,Y]}Z)(X_1,\cdots, X_k)+\\
& &
\nabla_XX_1(\theta_YZ, X_2,\cdots, X_k)-\\
& &
\nabla_YX_1(\theta_XZ,X_2,\cdots, X_k)\\
& = &
P(\theta_X\theta_YZ-\theta_Y\theta_XZ-\theta_{[X,Y]}Z)(X_1,\cdots, X_k)+\\
& &
\nabla_XX_1(\theta_YZ, X_2,\cdots, X_k)-\\
& &
\nabla_YX_1(\theta_XZ,X_2,\cdots, X_k)
\end{array}
\]
thus we get the formula
$$
R^\nabla(X,Y)Z(X_1,\cdots, X_k)=
P(R^\theta(X,Y)Z)(X_1,\cdots, X_k)+
$$
$$
\nabla_XX_1(\theta_YZ, X_2,\cdots, X_k)
-\nabla_YX_1(\theta_XZ,X_2,\cdots, X_k),
\,\,\,\forall X,X_1,\cdots, X_k,Y,Z\in\mathcal{X}.
$$

Let us apply it to the situation described in Example \ref{superex}.
Indeed, let $h$ be a Riemannian metric of $M$ which is a type $(2,0)$ symmetric nondegenerate positive definite tensor field of $M$.
Hence there is a unique torsion free compatible
$\Omega^1(M)$-pseudoconnection $\nabla^h$ of $TM$ with principal homomorphism
$P^h$. As $h$ is a  Riemannian metric we have that $P^h$ is an isomorphism
hence $\nabla^h$ has a unique Koszul derivative $\theta^h$
(this is nothing but the Riemannian connection of $h$).
By the above formula the curvatures $R^{\nabla^h}$ and $R^h=R^{\theta^h}$ of $\nabla^h$ and $h$ respectively are related by
$$
R^{\nabla^h}(X,Y)Z(W)=
h(R^h(X,Y)Z,W)+h(\theta^h_XW,\theta^h_YZ)-h(\theta^h_YW,\theta^h_XZ),
$$
$\forall X,Y,Z,W\in\mathcal{X}$.

\section{The $2$-Riemannian pseudoconnection and curvature}
\label{$2$-Riem-pseudo}

\noindent
Motivated by the homomorphism $P^h$ in Example \ref{superex} we associate to every $2$-Riemannian metric
$g$ a map
$P^g: \mathcal{X}\to \mathfrak{D}_0^2(M)$ defined by
$$
P^g(X)(Y,Z)=g(X,Y/Z).
$$
It follows from the definition of $2$-inner products that
$P^g$ is symmetric. So,
by Proposition \ref{levi-civita},
there is a unique torsion free compatible $\mathfrak{D}^2_0(M)$-pseudoconnection $\nabla^g$ of $TM$ with principal
homomorphism $P^g$. Such a $\nabla^g$ will be refereed to as
the {\em $2$-Riemannian pseudoconnection of $g$}.
For later application we quote the formula of $\nabla^g$:
\begin{equation}
\label{pseudoconnection}
\nabla^{g}_XY(Z,W)=\frac{1}{2}\{Xg( Y,Z/W) + Yg( Z,X/W) - Zg( X,Y/W)+
\end{equation}
$$
g( [X,Y],Z/W) + g([Z,X],Y/W) - g([Y,Z],X/W)\},
$$
for all $X,Y,Z,W\in \mathcal{X}$.

By a {\em $2$-Riemannian pseudoconnection} we mean a pseudoconnection
equals to $\nabla^g$ for some $2$-Riemannian metric $g$.
The {\em curvature of a $2$-Riemannian metric $g$} is the curvature map
of $\nabla^g$.

\begin{clly}
\label{never-vanish2}
Every $2$-Riemannian metric
has non zero curvature.
\end{clly}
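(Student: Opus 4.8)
The plan is to argue by contradiction and reduce everything to Corollary~\ref{never-vanish}. Suppose some $2$-Riemannian metric $g$ on $M$ had zero curvature, i.e.\ $R^{\nabla^g}=0$. By its very construction $\nabla^g$ is the \emph{torsion free compatible} $\mathfrak{D}^2_0(M)$-pseudoconnection of $TM$ furnished by Proposition~\ref{levi-civita} applied to the symmetric homomorphism $P^g$; since here $k=2>1$, Corollary~\ref{never-vanish} applies and forces $\nabla^g=0$.

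Next I would show that then the principal homomorphism $P^g$ vanishes identically. This is immediate from the Leibniz-type identity $\nabla^g_X(\varphi Y)=X(\varphi)\,P^g(Y)+\varphi\,\nabla^g_XY$ satisfied by any pseudoconnection: with $\nabla^g=0$ it becomes $X(\varphi)\,P^g(Y)=0$ for all $X\in\mathcal X$, $\varphi\in C^\infty(M)$ and $Y\in\mathcal X$; choosing locally a coordinate $\varphi=x^1$ together with $X=\partial/\partial x^1$ (so that $X(\varphi)\equiv 1$), and then covering $M$ by charts, yields $P^g=0$. Since $P^g(Y)(Y,W)=g(Y,Y/W)$ by the definition of $P^g$, this says $g(Y,Y/W)\equiv 0$ for all $Y,W\in\mathcal X$, i.e.\ $g\equiv 0$. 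To finish I would contradict axiom~(1) of Definition~\ref{2-inner}: fix $p\in M$ and pick $u,v\in T_pM$ that are linearly independent — possible because $\dim M\geq 2$ — and extend them to vector fields $Y,W$ with $Y(p)=u$ and $W(p)=v$; evaluating $g(Y,Y/W)$ at $p$ gives $g_p(u,u/v)=0$ with $u,v$ linearly independent, contradicting the positive-definiteness axiom. Hence $R^{\nabla^g}\neq 0$.

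I do not expect a genuine obstacle: the substantive input is Corollary~\ref{never-vanish} (ultimately Theorem~\ref{ch-flat}), and the remaining steps are routine. The only points needing a little care are (i) checking that $\nabla^g$ genuinely meets the hypotheses of Corollary~\ref{never-vanish} — being compatible and $\mathfrak{D}^2_0(M)$-valued, both of which hold by construction — and (ii) the standing assumption $\dim M\geq 2$, without which every $2$-inner product on the tangent spaces is trivial and the statement would fail.
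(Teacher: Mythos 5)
Your argument is correct and follows essentially the same route as the paper: both reduce the statement to Corollary~\ref{never-vanish} applied to the compatible $\mathfrak{D}^2_0(M)$-pseudoconnection $\nabla^g$. The only difference is that you spell out why $\nabla^g\neq 0$ (via $\nabla^g=0\Rightarrow P^g=0\Rightarrow g\equiv 0$, contradicting axiom (1) of Definition~\ref{2-inner} when $\dim M\geq 2$), a step the paper simply asserts as ``every $2$-Riemannian pseudoconnection is non-zero.''
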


\begin{proof}
This follows from Corollary \ref{never-vanish} since every $2$-Riemannian pseudoconnection is non-zero.
\end{proof}

\subsection{Invariance by $2$-isometries}
In this subsection we obtain a $2$-Riemannian version of the classical
invariance by isometries of the Riemannian connection and curvature \cite{lee}.

Consider a diffeomorphism $f: M\to \overline{M}$ between differentiable manifolds
$M$ and $\overline{M}$.
Then, the derivative $Df:TM\to T\overline{M}$ is a bundle map
with induced map $f$ on the base manifolds $M$ and $\overline{M}$.
In such a case we have the notation $(Df)_*=f_*$.
On the other hand, $f$ induces for all integer $k\geq 1$ a natural map
$\overline{\Phi}_f: \mathfrak{D}^k(\overline{M})\to\mathfrak{D}^k(M)$
defined by
$$
\overline{\Phi}_f(\overline{d})(X_1, \cdots, X_k)=
\overline{d}(f_*X_1,\cdots, f_*X_k)\circ f,
$$
$\forall X_1,\cdots, X_k\in \mathcal{X}(M)$ ,$\forall \overline{d}\in\mathfrak{D}(\overline{M})$.
It is not difficult to see that
$\overline{\Phi}_f$ satisfies {\bf (P1)} and {\bf (P2)} in Proposition \ref{pull}.
Moreover, as
\[
\begin{array}{ccl}
X(\overline{\Phi}_f(\overline{d}))(X_1,\cdots, X_k)
& = &
X(\overline{\Phi}_f(\overline{d})(X_1, \cdots, X_k))\\
& = &
X(\overline{d}(f_*X_1, \cdots, f_*X_k)\circ f)\\
& = &
f_*X(\overline{d}(f_*X_1, \cdots, f_*X_k))\circ f\\
& = &
f_*X(\overline{d})(f_*X_1, \cdots,f_*X_k)\circ f\\
& = &
\overline{\Phi}_f(f_*X(\overline{d}))(X_1, \cdots, X_k),
\,\,\,\forall X_1, \cdots, X_k\in \mathcal{X}(M)
\end{array}
\]
one has that $\overline{\Phi}_f$ satisfies
{\bf (P3)} in Proposition \ref{curvado} too.

Now suppose that there are $2$-Riemannian metrics $g$ and $\overline{g}$ in $M$ and
$\overline{M}$ respectively so that
$f: M\to\overline{M}$ is a $2$-isometry.
Denote by $\nabla^g$ and $\nabla^{\overline{g}}$ the $2$-Riemannian
pseudoconnections of $g$ and $\overline{g}$ respectively.
As $\overline{\Phi}_f$ above satisfies {\bf (P1)} and {\bf (P2)} we have
for $k=2$
that the pullback $(Df,\overline{\Phi}_f)^*(\nabla^{\overline{g}})$ is a well defined $\mathfrak{D}_0^2(M)$-pseudoconnection of $TM$. Indeed we have the following

\begin{prop}
\label{invariance1}
$(Df,\overline{\Phi}_f)^*(\nabla^{\overline{g}})=\nabla^g$.
\end{prop}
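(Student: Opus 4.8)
The plan is to exploit the uniqueness clause of Proposition~\ref{levi-civita}: since $\nabla^g$ is \emph{the} unique torsion free compatible $\mathfrak{D}^2_0(M)$-pseudoconnection of $TM$ with principal homomorphism $P^g$, it suffices to show that the pullback $(Df,\overline{\Phi}_f)^*(\nabla^{\overline{g}})$ is (a) a well-defined $\mathfrak{D}^2_0(M)$-pseudoconnection of $TM$, (b) torsion free, (c) compatible, and (d) has principal homomorphism exactly $P^g$. Item (a) is already in hand from Proposition~\ref{pull} together with the fact, verified in the paragraph preceding the statement, that $\overline{\Phi}_f$ satisfies {\bf (P1)}, {\bf (P2)}, {\bf (P3)}; note that $(Df,\overline{\Phi}_f)^*(\nabla^{\overline{g}})$ takes values in $\mathfrak{D}^2_0(M)$ and not merely $\mathfrak{D}^2(M)$ because $\nabla^{\overline{g}}$ does and $\overline{\Phi}_f$ preserves $C^\infty$-linearity in the first slot. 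So the real work is (b), (c), (d).

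For (d) I would compute the principal homomorphism $P$ of the pullback, which by Proposition~\ref{pull} equals $\overline{\Phi}_f\circ P^{\overline{g}}\circ (Df)_* = \overline{\Phi}_f\circ P^{\overline{g}}\circ f_*$. Unwinding the definitions: for $X,Y,Z\in\mathcal{X}(M)$,
\[
\big(\overline{\Phi}_f(P^{\overline{g}}(f_*X))\big)(Y,Z)
= P^{\overline{g}}(f_*X)(f_*Y,f_*Z)\circ f
= \overline{g}(f_*X,f_*Y/f_*Z)\circ f,
\]
and since $f$ is a $2$-isometry the last expression is exactly $g(X,Y/Z)=P^g(X)(Y,Z)$. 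Hence $P = P^g$. For (b), torsion freeness, I would use that $\overline{\nabla}^{\overline{g}}$ is torsion free, i.e. $\overline{\nabla}^{\overline{g}}_{\bar X}\bar Y-\overline{\nabla}^{\overline{g}}_{\bar Y}\bar X = \overline{P}([\bar X,\bar Y])$; applying $\overline{\Phi}_f$, using $f_*[X,Y]=[f_*X,f_*Y]$ and {\bf (P1)}, and invoking (d) to identify $\overline{\Phi}_f\circ\overline{P}\circ f_*$ with $P^g$, one gets $(Df,\overline{\Phi}_f)^*(\nabla^{\overline{g}})_XY - (Df,\overline{\Phi}_f)^*(\nabla^{\overline{g}})_YX = P^g([X,Y])$, which is the torsion-free condition for the pullback.

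The step I expect to be the genuine obstacle is (c), compatibility, because the compatibility identity
\[
X\,P^g(Y)(X_1,X_2) = (Df,\overline{\Phi}_f)^*(\nabla^{\overline{g}})_XY(X_1,X_2) + (Df,\overline{\Phi}_f)^*(\nabla^{\overline{g}})_XX_1(Y,X_2)
\]
involves a derivative $X(\cdot)$ of a composition with $f$, so it must be pushed through $f$ carefully using property {\bf (P3)} (equivalently the identity $X(\bar d(f_*X_1,\ldots)\circ f) = (f_*X(\bar d))(f_*X_1,\ldots)\circ f$ established just before the statement). Concretely, I would rewrite $X\,P^g(Y)(X_1,X_2) = X\big(\overline{g}(f_*X,f_*Y/f_*X_2)\circ f\big)$ — wait, more precisely $X\big(\overline{P}^{\overline g}(f_*Y)(f_*X_1,f_*X_2)\circ f\big)$ — as $\big(f_*X\,\overline{P}^{\overline g}(f_*Y)(f_*X_1,f_*X_2)\big)\circ f$, then apply the compatibility of $\nabla^{\overline g}$ with $\overline{P}^{\overline g}$ on $\overline M$ (valid for the vector fields $f_*X,f_*Y,f_*X_1,f_*X_2$), and finally transport the resulting two terms back through $\overline{\Phi}_f$, recognizing them as $(Df,\overline{\Phi}_f)^*(\nabla^{\overline g})_XY(X_1,X_2)$ and $(Df,\overline{\Phi}_f)^*(\nabla^{\overline g})_XX_1(Y,X_2)$ respectively. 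Once (a)--(d) are in place, the uniqueness in Proposition~\ref{levi-civita} forces $(Df,\overline{\Phi}_f)^*(\nabla^{\overline{g}}) = \nabla^g$, completing the proof.
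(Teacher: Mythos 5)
Your proposal is correct and follows essentially the same route as the paper: verify that the pullback is a torsion free, compatible $\mathfrak{D}^2_0(M)$-pseudoconnection whose principal homomorphism is $P^g$ (using the $2$-isometry property for the latter and pushing the derivative through $f$ via the identity underlying {\bf (P3)} for compatibility), then conclude by the uniqueness clause of Proposition~\ref{levi-civita}. The paper's proof carries out exactly these computations in the same order.
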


\begin{proof}
It follows from
Proposition \ref{pull} that
the principal homomorphism
$P^{(Df,\overline{\Phi}_f)^*(\nabla^{\overline{g}})}$ of
$(Df,\overline{\Phi}_f)^*(\nabla^{\overline{g}})$ is
$\overline{\Phi}_f\circ P^{\overline{g}}\circ f_*$. Then,
$$
P^{(Df,\overline{\Phi}_f)^*(\nabla^{\overline{g}})}(X)=
\overline{\Phi}_f(P^{\overline{g}}(f_* X)),
\,\,\,\,\,\,\,\forall X\in\mathcal{X}(M)
$$
and so
\[
\begin{array}{ccl}
P^{(Df,\overline{\Phi}_f)^*(\nabla^{\overline{g}})}(X)(Y,Z)
& = &
P^{\overline{g}}(f_*X)(f_*Y,f_*Z)\circ f\\
& = &
\overline{g}(f_*X,f_*Y/f_*Z)\circ f,
\,\,\,\,\forall X,Y,Z\in\mathcal{X}(M).
\end{array}
\]
But $f$ is a $2$-isometry so
$$
\overline{g}(f_*X,f_*Y/f_*Z)\circ f=g(X,Y/Z)=P^g(X)(Y,Z),
\,\,\,\,\forall X,Y,Z\in\mathcal{X}(M).
$$
Replacing above we get
$$
P^{(Df,\overline{\Phi}_f)^*(\nabla^{\overline{g}})}=P^g.
$$

Next we observe that
\[
\begin{array}{ccl}
(Df,\overline{\Phi})^*(\nabla^{\overline{g}})_XY-
(Df,\overline{\Phi})^*(\nabla^{\overline{g}})_YX
& = &
\overline{\Phi}_f(\nabla^{\overline{g}}_{f_*X}f_*Y)-
\overline{\Phi}_f(\nabla^{\overline{g}}_{f_*Y}f_*X)\\
& = &
\overline{\Phi}_f(\nabla^{\overline{g}}_{f_*X}f_*Y-
\nabla^{\overline{g}}_{f_*Y}f_*X)\\
& = &
\overline{\Phi}_f(P^{\overline{g}}([f_*X,f_*Y])),
\,\,\,\,\,\,\forall X,Y\in\mathcal{X}(M)
\end{array}
\]
since $\nabla^{\overline{g}}$ is torsion free.
So,
\[
\begin{array}{ccl}
((Df,\overline{\Phi})^*(\nabla^{\overline{g}})_XY-
(Df,\overline{\Phi})^*(\nabla^{\overline{g}})_YX)(Z,W)
& = &
P^{\overline{g}}([f_*X,f_*Y])(f_*Z,f_*W)\circ f\\
& = &
\overline{g}([f_*X,f_*Y],f_*Z/f_*W)\circ f\\
& = &
g([X,Y],Z/W)\\
& = &
P^g([X,Y])(Z,W)\\
& = &
P^{(Df,\overline{\Phi}_f)^*(\nabla^{\overline{g}})}([X,Y])(Z,W),
\end{array}
\]
for all $X,Y,Z,W\in\mathcal{X}(M)$. Consequently
$(Df,\overline{\Phi})^*(\nabla^{\overline{g}})$ is torsion free.

On the other hand, we have that $\nabla^{\overline{g}}$ is compatible
so
\[
\begin{array}{ccl}
f_*X(P^{\overline{g}}(f_*Y)(f_*Z,f_*W))
& = &
\nabla^{\overline{g}}_{f_*X}f_*Y(f_*Z,f_*W)+
\nabla^{\overline{g}}_{f_*X}f_*Z(f_*Y,f_*W)\\
& = &
(\overline{\Phi}_f(\nabla^{\overline{g}}_{f_*X}f_*Y)(Z,W)+\\
& &
\overline{\Phi}_f(\nabla^{\overline{g}}_{f_*X}f_*Z)(Y,W))\circ f^{-1}\\
& = &
((Df,\overline{\Phi}_f)^*(\nabla^{\overline{g}})_XY(Z,W)+\\
& &
(Df,\overline{\Phi}_f)^*(\nabla^{\overline{g}})_XZ(Y,W))\circ f^{-1}.
\end{array}
\]
Hence
$$
f_*X(P^{\overline{g}}(f_*Y)(f_*Z,f_*W))\circ f=
(Df,\overline{\Phi}_f)^*(\nabla^{\overline{g}})_XY(Z,W)+
$$
$$
(Df,\overline{\Phi}_f)^*(\nabla^{\overline{g}})_XZ(Y,W)
$$
for all $X,Y,Z,W\in \mathcal{X}(M)$.
But
\[
\begin{array}{ccl}
f_*X(P^{\overline{g}}(f_*Y)(f_*Z,f_*W))\circ f
& = &
XP^g(Y)(Z,W)\\
& = &
XP^{(Df,\overline{\Phi}_f)^*(\nabla^{\overline{g})}}(Y)(Z,W)
\end{array}
\]
for all $X,Y,Z,W\in\mathcal{X}(M)$. Replacing above we get
$$
XP^{(Df,\overline{\Phi}_f)^*(\nabla^{\overline{g}})}(Y)(Z,W)=
(Df,\overline{\Phi}_f)^*(\nabla^{\overline{g}})_XY(Z,W)+
(Df,\overline{\Phi}_f)^*(\nabla^{\overline{g}})_XZ(Y,W),
$$
for all $X,Y,Z,W\in\mathcal{X}(M)$.
Therefore
$(Df,\overline{\Phi}_f)^*(\nabla^{\overline{g}})$ is compatible.
It follows that
$(Df,\overline{\Phi}_f)^*(\nabla^{\overline{g}})=\nabla^g$
by the uniqueness of the $2$-Riemannian pseudoconnection
(see Proposition \ref{levi-civita}).
\end{proof}

\begin{clly}
The curvatures $R^g$ and $R^{\overline{g}}$ of $g$ and $\overline{g}$ respectively are related by
$$
R^g(X,Y)Z(W,T)=R^{\overline{g}}(f_*X,f_*Y)f_*Z(f_*W,f_*T)\circ f,
\,\,\,\,\,\,\,\,\forall X,Y,Z,W,T\in\mathcal{X}(M).
$$
\end{clly}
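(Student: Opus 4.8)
The plan is to obtain this corollary as a purely formal consequence of Proposition~\ref{invariance1} together with Proposition~\ref{curvado}, with no new computation required. First I would record the two ingredients already assembled above: the derivative $Df:TM\to T\overline{M}$ is a bundle map inducing the diffeomorphism $f$ on the base manifolds, and the associated map $\overline{\Phi}_f:\mathfrak{D}^2(\overline{M})\to\mathfrak{D}^2(M)$ has just been verified to satisfy properties {\bf (P1)}, {\bf (P2)} and {\bf (P3)}. Hence Proposition~\ref{curvado} applies with $F=Df$, $\overline{\Phi}=\overline{\Phi}_f$ and $\overline{\nabla}=\nabla^{\overline{g}}$ (taking $k=2$), giving
$$
R^{(Df,\overline{\Phi}_f)^*(\nabla^{\overline{g}})}=(Df,\overline{\Phi}_f)^*\big(R^{\nabla^{\overline{g}}}\big).
$$

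Next I would feed in Proposition~\ref{invariance1}, which asserts $(Df,\overline{\Phi}_f)^*(\nabla^{\overline{g}})=\nabla^g$. Since $R^g$ and $R^{\overline{g}}$ are, by definition, the curvature maps of $\nabla^g$ and $\nabla^{\overline{g}}$, the displayed equality becomes
$$
R^g=(Df,\overline{\Phi}_f)^*\big(R^{\overline{g}}\big).
$$
It then remains only to unwind the definition of the pullback of a map: evaluating both sides at a triple $(X,Y,Z)\in\mathcal{X}(M)\times\mathcal{X}(M)\times\mathcal{X}(M)$ one obtains $R^g(X,Y)Z=\overline{\Phi}_f\big(R^{\overline{g}}(f_*X,f_*Y)(Df)_*Z\big)$, where $(Df)_*Z=f_*Z$ by the convention $(Df)_*=f_*$ and the identification of $F_*$ on sections of $TM$ with $f_*$.

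Finally I would evaluate this element of $\mathfrak{D}^2(M)$ on a pair $(W,T)\in\mathcal{X}(M)\times\mathcal{X}(M)$ using the defining formula $\overline{\Phi}_f(\overline{d})(W,T)=\overline{d}(f_*W,f_*T)\circ f$, which yields
$$
R^g(X,Y)Z(W,T)=R^{\overline{g}}(f_*X,f_*Y)f_*Z(f_*W,f_*T)\circ f,
$$
as claimed. The only point demanding a little care — and it is entirely bookkeeping, not a substantive obstacle — is keeping straight the two kinds of slots of $\mathfrak{D}^2$: the three vector-field arguments $X,Y,Z$ carried by the curvature versus the two arguments $W,T$ of the resulting element of $\mathfrak{D}^2(M)$. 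Everything else is a direct substitution, so the corollary follows.
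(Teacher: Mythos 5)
Your proposal is correct and follows exactly the paper's own argument: identify $\nabla^g$ with the pullback $(Df,\overline{\Phi}_f)^*(\nabla^{\overline{g}})$ via Proposition \ref{invariance1}, apply Proposition \ref{curvado} (whose hypotheses {\bf (P1)}--{\bf (P3)} were verified just before), and unwind the definition of $\overline{\Phi}_f$ on the pair $(W,T)$. Nothing is missing.
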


\begin{proof}
This is a direct computation using propositions \ref{invariance1} and \ref{curvado}:
\[
\begin{array}{ccl}
R^g(X,Y)Z(W,T)
& = &
R^{(Df,\overline{\Phi}_f)^*(\nabla^{\overline{g}})}(X,Y)Z(W,T)\\
& = &
((Df,\overline{\Phi}_f)^*(R^{\nabla^{\overline{g}}})(X,Y)Z)(W,T)\\
& = &
\overline{\Phi}_f(R^{\overline{g}}(f_*X,f_*Y)f_*Z)(W,T)\\
& = &
R^{\overline{g}}(f_*X,f_*Y)f_*Z(f_*W,f_*T)\circ f.
\end{array}
\]
\end{proof}

\subsection{Non-existence of Koszul derivatives I}
In this subsection we shall prove that
every $2$-Riemannian pseudoconnection is Koszul derivative free.
The proof will involve some auxiliary definitions and notations.
Let $(M,g)$ be a $2$-Riemannian manifold.
We say that $X\in \mathcal{X}$ is {\em stationary} (with respect to $g$)
if
$$
Xg( Y,Z/ X)=
g([X,Y],Z/ X) + g(Y,[X,Z]/X), \,\,\,\forall Y,Z\in \mathcal{X}.
$$

\begin{rk}
\label{rk2}
$X\in\mathcal{X}$ is stationary with respect to $g$ if and only if
$$
Xg( Y,Y/X)=2g( [X,Y],Y/X),
\,\,\,\,\,\,\,\,\,\,\forall Y\in \mathcal{X}.
$$
\end{rk}

The name "stationary" is motivated by the fact that
if $X$ is non-singular stationary, then the map
$(u,v)\in TM\to g_p( u,v/X(p))$ defines a stationary semi-Riemannian metric of $M$
in the sense of Definition 3.1.3 p. 41 in \cite{k}.

On the other hand, a homomorphism
$P:\mathcal{X}\to \Omega^1(M)$ is
{\em stationary} if
$$
XP(Y)(Z)=P([X,Y])(Z)+P(Y)([X,Z]),
\,\,\,\,\,\,\,\,\,\,\forall (X,Y,Z)\in Ker(P)\times \mathcal{X}\times \mathcal{X}.
$$
This definition is also motivated by \cite{k}.

Stationary vector fields and homomorphisms
are related via the following lemma.

\begin{lemma}
 \label{kupeli}
$X\in \mathcal{X}$ is stationary with respect to $g$ if and only if
the homomorphism of $C^\infty(M)$-modules $P^X:\mathcal{X}\to \Omega^1(M)$ does, where
$$
P^X(Y)(Z)=g( Y,Z/X),
\,\,\,\,\,\,\,\,\forall Y,Z\in \mathcal{X}.
$$
\end{lemma}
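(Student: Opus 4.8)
The plan is to exploit the fact that the fixed vector field $X$ itself always lies in $Ker(P^X)$: by Lemma \ref{sl-dgw}-(1) together with property (3) of Definition \ref{2-inner} one has $g(X,Z/X)=g(Z,X/X)=0$ for every $Z\in\mathcal{X}$, hence $P^X(X)=0$. This single observation is what makes the two notions of ``stationary'' equivalent, even though the condition defining a stationary homomorphism is only imposed on the (generally large) submodule $Ker(P^X)$.

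Granting this, the implication ``$P^X$ stationary $\Rightarrow$ $X$ stationary'' is immediate: since $X\in Ker(P^X)$, the defining identity of a stationary homomorphism applied with $W=X$ reads $Xg(Y,Z/X)=g([X,Y],Z/X)+g(Y,[X,Z]/X)$ for all $Y,Z$, which is exactly the definition of $X$ being stationary.

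For the converse, suppose $X$ is stationary, fix $W\in Ker(P^X)$ and $Y,Z\in\mathcal{X}$, and let us check $Wg(Y,Z/X)=g([W,Y],Z/X)+g(Y,[W,Z]/X)$ as functions on $M$. On the open set $U=\{p:X(p)\neq0\}$ the kernel condition forces $W=fX$ on $U$ for a unique $f\in C^\infty(U)$, because $g_p(W(p),W(p)/X(p))=0$ makes $W(p)$ and $X(p)$ linearly dependent by property (1). Using the Leibniz rule $[fX,Y]=f[X,Y]-Y(f)X$, the $C^\infty(M)$-bilinearity of $g$ in its first two slots, the vanishing $g(X,\cdot/X)=g(\cdot,X/X)=0$, and finally the stationarity of $X$, one computes on $U$ that
$$
g([W,Y],Z/X)+g(Y,[W,Z]/X)=f\bigl(g([X,Y],Z/X)+g(Y,[X,Z]/X)\bigr)=f\cdot X(g(Y,Z/X))=W(g(Y,Z/X)).
$$
On $\operatorname{int}\{X=0\}$ both sides of the desired identity vanish identically, since there $g(\,\cdot\,,\cdot/X)\equiv0$ (because $g_p(\,\cdot\,,\cdot/0)=0$ by Lemma \ref{sl-dgw}-(1) with $\alpha_3=0$). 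As $M=\overline{U}\cup\operatorname{int}\{X=0\}$ and both sides of the identity are smooth functions, the identity propagates from $U$ to its closure $\overline{U}$ by continuity, hence holds on all of $M$. Therefore $P^X$ is stationary.

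The only delicate point is the behaviour along $\partial\{X=0\}$, where $Ker(P^X)$ is too large to pin $W$ down pointwise; this is why the argument is arranged so that the identity is first established on $U$ (where $W=fX$), then on $\operatorname{int}\{X=0\}$ trivially, and finally extended to $\overline{U}$ by continuity, rather than verified directly at the zeros of $X$. Everything else reduces to routine use of the axioms in Definition \ref{2-inner}, the computational facts in Lemma \ref{sl-dgw}, and the Leibniz rule for Lie brackets.
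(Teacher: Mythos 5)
Your proof is correct and follows essentially the same route as the paper's: one direction comes from the observation that $X\in Ker(P^X)$, and the converse works on the open set where $X\neq 0$ by writing a kernel element as $fX$, applying the Leibniz rule for brackets together with $g(X,\cdot/X)=0$, and finishing by density/continuity. The only differences are cosmetic: the paper first polarizes to the diagonal identity $Yg(Z,Z/X)=2g([Y,Z],Z/X)$ before running the same computation, and your explicit decomposition $M=\closure(U)\cup\inte\{X=0\}$ is in fact a slightly more careful version of the paper's continuity step.
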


\begin{proof}
By Lemma \ref{sl-dgw}-(1) we have
$X\in Ker(P^X)$ therefore $X$ is stationary if $P^X$
does.
Conversely, suppose that $X$ is stationary. To prove that $P^X$ also does it suffices to prove
$YP^X(Z)(Z)=2P^X([Y,Z])(Z)$, $\forall (Y,Z)\in Ker(P^X)\times \mathcal{X}$ or, equivalently,
$$
Yg( Z,Z/X\rangle=2g( [Y,Z],Z/X),
\,\,\,\,\,\,\,\forall (Y,Z)\in Ker(P^X)\times \mathcal{X}.
$$
With this in mind we
fix $(Y,Z)\in Ker(P^X)\times \mathcal{X}$.
Obviously we only have to check the above identity in $M_0$, the complement of the closure
of the interior of the set zeroes of $X$ and $Y$.
As $Y\in Ker(P^X)$ Definition \ref{2-inner}-(1) implies that
there is $f_0\in C^\infty(M_0)$ such that
$Y=f_0X$ in $M_0$.
But $X$ is stationary, so we have in $M_0$ that
\[
\begin{array}{ccl}
Yg( Z,Z/X)
& = &
f_0Xg( Z,Z/X)\\
& = &
2f_0g(  [X,Z],Z/X)\\
& = &
2 g( [f_0X,Z],Z/X)\\
& = &
2 g( [Y,Z],Z/X)
\end{array}
\]
and the proof follows.
\end{proof}

\begin{lemma}
\label{le1}
If a torsion free compatible $\Omega^1(M)$-pseudoconnection of $TM$ has a Koszul derivative, then
its principal homomorphism is stationary.
\end{lemma}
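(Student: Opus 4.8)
The plan is to unwind the three hypotheses — compatibility, torsion freeness, and the existence of a Koszul derivative — and feed them directly into the definition of a stationary homomorphism. Write $\nabla$ for the pseudoconnection, $P:\mathcal{X}\to\Omega^1(M)$ for its principal homomorphism, and $D:\mathcal{X}\times\mathcal{X}\to\mathcal{X}$ for a Koszul derivative, so that $\nabla_XY=P(D_XY)$ for all $X,Y\in\mathcal{X}$. Since $\Omega^1(M)=\mathfrak{D}^1_0(M)$ and $\nabla$ is compatible, its principal homomorphism is symmetric, i.e. $P(U)(V)=P(V)(U)$ for all $U,V\in\mathcal{X}$; this symmetry will be invoked repeatedly. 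Recall that compatibility in the case $k=1$ reads $X(P(Y)(Z))=\nabla_XY(Z)+\nabla_XZ(Y)$, and that to prove $P$ stationary we must check $X(P(Y)(Z))=P([X,Y])(Z)+P(Y)([X,Z])$ only for $X\in Ker(P)$ and arbitrary $Y,Z$.

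The crucial step is to show that $\nabla_YX=0$ for every $Y\in\mathcal{X}$ whenever $X\in Ker(P)$, and this is precisely where the Koszul derivative is used. Apply compatibility with $X$ and $Y$ interchanged: $Y(P(X)(Z))=\nabla_YX(Z)+\nabla_YZ(X)$. The left-hand side vanishes because $P(X)=0$, while the last term equals $P(D_YZ)(X)=P(X)(D_YZ)=0$, again by symmetry of $P$ together with $X\in Ker(P)$. Hence $\nabla_YX(Z)=0$ for all $Z$, so $\nabla_YX=0$ as an element of $\Omega^1(M)$. Note that without the Koszul factorisation the term $\nabla_YZ(X)$ carries no reason to vanish, so this is the one place where the hypothesis is genuinely consumed; torsion freeness has not yet been used.

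Torsion freeness then gives $\nabla_XY=\nabla_YX+P([X,Y])=P([X,Y])$ for $X\in Ker(P)$. Substituting this into compatibility yields $X(P(Y)(Z))=\nabla_XY(Z)+\nabla_XZ(Y)=P([X,Y])(Z)+P([X,Z])(Y)$, and rewriting the last summand as $P(Y)([X,Z])$ via symmetry of $P$ gives exactly $X(P(Y)(Z))=P([X,Y])(Z)+P(Y)([X,Z])$, the defining identity of a stationary homomorphism. The argument is entirely module-theoretic, using no regularity or non-vanishing assumption on $X$, so I do not anticipate any real obstacle; the only point demanding care is keeping track of the symmetry of $P$ each time a one-form is evaluated on a vector field.
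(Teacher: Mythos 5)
Your proof is correct and uses exactly the same ingredients as the paper's: compatibility applied twice, torsion freeness once, and the key observation that $\nabla_YZ(X)=P(D_YZ)(X)=P(X)(D_YZ)=0$ for $X\in Ker(P)$ via symmetry of $P$. The only difference is organizational — you first isolate $\nabla_YX=0$ and then substitute, whereas the paper derives one general identity and specializes — so this is essentially the paper's argument.
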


\begin{proof}
Let $\nabla $ be a torsion free compatible $\Omega^1(M)$-pseudoconnection with principal homomorphism
$P$. We have that $P$ is symmetric since $\nabla$ is compatible.
Therefore,
\[
\begin{array}{ccl}
XP(Y)(Z) & = &
\nabla_XY(Z)+\nabla_XZ(Y)\\
& = &
\nabla_YX(Z)+\nabla_ZX(Y)+P([X,Y])(Z)+P([X,Z])(Y)\\
& = &
Y(P(X)(Z))+Z(P(X)(Y))-\nabla_YZ(X)-\nabla_ZY(X)+\\
&  &
P([X,Y])(Z)+P([X,Z])(Y)
\end{array}
\]
proving
\begin{equation}
 \label{e1}
XP(Y)(Z)-P([X,Y])(Z)-P(Y)([X,Z])=YP(X)(Z)+ZP(X)(Y)-
\end{equation}
$$
\nabla_YZ(X)-\nabla_ZY(X),
$$
for all $X,Y,Z\in \mathcal{X}$.

Now suppose that $\nabla$ has a Koszul derivative $D$.
If $X\in Ker(P)$ then $P(X)=0$ and so
$$
\nabla_YZ(X)=P(D_YZ)(X)=P(X)(D_YZ)=0.
$$
It follows that $P$ is stationary by (\ref{e1}).
\end{proof}

Now we state an auxiliary lemma.

\begin{lemma}
\label{frakW}
For every $2$-Riemannian metric $g$ in $M$ the map
$\mathfrak{W}:\mathcal{X}\times \mathcal{X}\times\mathcal{X}\to C^\infty(M)$ defined by
$$
\mathfrak{W}(X,Y,Z)=g([Z,X],Y/Z)+g(X,[Z,Y]/Z)-Zg(X,Y/Z),
\,\,\,\,\forall X,Y,Z\in\mathcal{X}
$$
satisfies the following properties:
\begin{enumerate}
\item
$X$ is stationary for $g$ if and only if $\mathfrak{W}(Y,Y,X)=0$
for all $Y\in\mathcal{X}$.
\item
If $X,Y,Z\in \mathcal{X}$ and $\varphi\in C^\infty(M)$, then
$$
\mathfrak{W}(X,Y,\varphi Z)=\varphi^3\mathfrak{W}(X,Y,Z)-\varphi Z(\varphi^2)
g(X,Y/Z).
$$
\end{enumerate}
\end{lemma}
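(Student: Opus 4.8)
The plan is to settle item (1) directly from the defining identity for stationary vector fields, and then to verify item (2) by a bookkeeping computation that uses only the algebraic rules for $2$-inner products collected in Lemma \ref{sl-dgw} together with the Leibniz rule for the Lie bracket.

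For (1), I would begin by applying the symmetry property Definition \ref{2-inner}-(3), $g(u,v/w)=g(v,u/w)$, to the middle term, so that $g(Y,[X,Y]/X)=g([X,Y],Y/X)$ and hence
$$
\mathfrak{W}(Y,Y,X)=2g([X,Y],Y/X)-Xg(Y,Y/X).
$$
Thus the condition $\mathfrak{W}(Y,Y,X)=0$ for all $Y\in\mathcal{X}$ is literally $Xg(Y,Y/X)=2g([X,Y],Y/X)$ for all $Y\in\mathcal{X}$, which by Remark \ref{rk2} is precisely the statement that $X$ is stationary for $g$.

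For (2), I would expand $\mathfrak{W}(X,Y,\varphi Z)$ term by term. Using the Leibniz rule $[\varphi Z,X]=\varphi[Z,X]-X(\varphi)Z$ (and likewise with $Y$ in place of $X$), the bilinearity of $g$ in its first two slots, and the homogeneity $g(u,v/\varphi w)=\varphi^2 g(u,v/w)$ of the last slot coming from Lemma \ref{sl-dgw}-(1), the first two summands become
$$
g([\varphi Z,X],Y/\varphi Z)=\varphi^3\,g([Z,X],Y/Z)-\varphi^2 X(\varphi)\,g(Z,Y/Z),
$$
$$
g(X,[\varphi Z,Y]/\varphi Z)=\varphi^3\,g(X,[Z,Y]/Z)-\varphi^2 Y(\varphi)\,g(X,Z/Z).
$$
The decisive simplification is that the two spurious terms vanish: $g(Z,Y/Z)=0$ by the first identity in Lemma \ref{sl-dgw}-(1), and $g(X,Z/Z)=g(Z,X/Z)=0$ by Definition \ref{2-inner}-(3) and the same identity. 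For the third summand I would rewrite $g(X,Y/\varphi Z)=\varphi^2 g(X,Y/Z)$ and apply the ordinary Leibniz rule for the vector field $\varphi Z$ acting on a product of functions:
$$
(\varphi Z)g(X,Y/\varphi Z)=\varphi Z\bigl(\varphi^2 g(X,Y/Z)\bigr)=\varphi Z(\varphi^2)\,g(X,Y/Z)+\varphi^3\,Zg(X,Y/Z).
$$
Adding the three contributions, all the $\varphi^3$-terms assemble into $\varphi^3\mathfrak{W}(X,Y,Z)$ and the only leftover is $-\varphi Z(\varphi^2)g(X,Y/Z)$, which is the asserted formula.

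The one point that really needs care is that $g$ is quadratic, not linear, in its third argument, so each rescaling of the last slot by $\varphi$ contributes a factor $\varphi^2$; these powers must be tracked consistently across all three summands. Once the vanishing of $g(Z,Y/Z)$ and $g(X,Z/Z)$ is observed and the Leibniz expansion of $Z(\varphi^2 g(X,Y/Z))$ is carried out, everything collapses to the stated identity with no further difficulty.
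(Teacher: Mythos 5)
Your proposal is correct and follows essentially the same route as the paper: part (1) is the direct identification of $\mathfrak{W}(Y,Y,X)=2g([X,Y],Y/X)-Xg(Y,Y/X)$ combined with Remark \ref{rk2}, and part (2) is the same term-by-term expansion using $[\varphi Z,X]=\varphi[Z,X]-X(\varphi)Z$, the quadratic homogeneity of the last slot, and the Leibniz rule. You are in fact slightly more explicit than the paper in noting that the spurious terms vanish because $g(Z,Y/Z)=0$ and $g(X,Z/Z)=0$, which the paper's computation absorbs silently.
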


\begin{proof}
The first part follows directly from the definition of stationary vector field.
The second one is a direct computation,
\[
\begin{array}{ccl}
\mathfrak{W}(X,Y,\varphi Z)
& = &
g([\varphi Z,X],Y/\varphi Z)+
g(X,[\varphi Z,Y]/\varphi Z)-(\varphi Z)g(X,Y/\varphi Z)\\
& = &
\varphi^2g(-X(\varphi)Z+\varphi [Z,X],Y/Z)+\\
& &
\varphi^2g(X,-Y(\varphi)Z+\varphi[Z,Y]/Z)-
\varphi\cdot Z(\varphi^2g(X,Y/Z))\\
& = &
\varphi^3(g([Z,X],Y/Z)+g(X,[Z,Y]/Z))-\varphi^3\cdot Zg(X,Y/Z)-\\
& &
\varphi\cdot Z(\varphi^2)\cdot g(X,Y/Z)\\
& = &
\varphi^3\mathfrak{W}(X,Y,Z)-\varphi Z(\varphi^2)
g(X,Y/Z).
\end{array}
\]
\end{proof}

\begin{lemma}
 \label{le2}
For every non zero vector field $X\in \mathcal{X}$ the orbit $C^\infty(M)\cdot X$
of $X$ under the natural action $C^\infty(M)\times \mathcal{X}\to\mathcal{X}$
contains at least one non stationary vector field with respect to $g$.
\end{lemma}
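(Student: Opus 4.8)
The plan is to argue by contradiction, using the transformation law recorded in Lemma~\ref{frakW}. Suppose some nonzero $X\in\mathcal{X}$ had the property that \emph{every} member $\varphi X$ of its orbit $C^\infty(M)\cdot X$ were stationary with respect to $g$. Taking $\varphi\equiv 1$ first shows that $X$ itself is stationary, so Lemma~\ref{frakW}-(1) gives $\mathfrak{W}(Y,Y,X)=0$ for all $Y\in\mathcal{X}$. Then for an arbitrary $\varphi\in C^\infty(M)$, Lemma~\ref{frakW}-(2) yields
$$
\mathfrak{W}(Y,Y,\varphi X)=\varphi^3\,\mathfrak{W}(Y,Y,X)-\varphi X(\varphi^2)\,g(Y,Y/X)=-2\varphi^2 X(\varphi)\,g(Y,Y/X),
$$
and, since $\varphi X$ is also stationary, Lemma~\ref{frakW}-(1) forces $\varphi^2 X(\varphi)\,g(Y,Y/X)\equiv 0$ on $M$ for all $Y\in\mathcal{X}$ and all $\varphi\in C^\infty(M)$. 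The proof then reduces to exhibiting a point, a vector field and a function that violate this identity.

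To produce them, I would pick $p\in M$ with $X(p)\neq 0$, which exists because $X\neq 0$. Since $\dim M\geq 2$, choose $v\in T_pM$ linearly independent from $X(p)$ and extend it to a global section $Y\in\mathcal{X}$ with $Y(p)=v$; then Definition~\ref{2-inner}-(1) gives $g(Y,Y/X)(p)=g_p(v,v/X(p))>0$. Next, in a coordinate chart $(U,(x^i))$ around $p$ write $X(p)=\sum_i a^i\,\partial/\partial x^i|_p$ with $(a^i)\neq 0$, take a bump function $\rho\in C^\infty(M)$ equal to $1$ near $p$ and supported in $U$, and set $\varphi=\rho\cdot\big(1+\sum_i a^i(x^i-x^i(p))\big)$. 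Then $\varphi(p)=1$ and $X(\varphi)(p)=\sum_i (a^i)^2>0$, so $\varphi^2 X(\varphi)\,g(Y,Y/X)$ is strictly negative at $p$, contradicting the identity forced above. Hence $C^\infty(M)\cdot X$ must contain a non stationary vector field, which is the assertion of the lemma.

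The only step requiring any care is the construction of $\varphi$ with prescribed value and prescribed $X$-derivative at $p$, together with the use of $\dim M\geq 2$ to obtain a $Y$ with $g(Y,Y/X)(p)\neq 0$; the rest is a single substitution into Lemma~\ref{frakW}. If one wishes to avoid coordinates altogether, one may instead invoke the fact that the $1$-jet of a smooth function at $p$ may be prescribed arbitrarily: choose it so that $\varphi(p)=1$ and $d\varphi_p\big(X(p)\big)\neq 0$, then multiply by a cutoff to globalize, and proceed as above.
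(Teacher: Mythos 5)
Your proposal is correct and follows essentially the same route as the paper: assume every $\varphi X$ is stationary, feed this into Lemma \ref{frakW}-(1) and (2) to force $\varphi X(\varphi^2)\,g(Y,Y/X)\equiv 0$, and then contradict this by choosing suitable $\varphi$ and $Y$ at a point where $X\neq 0$; you merely make explicit the final step that the paper compresses into ``from this we obtain $X=0$.'' The only blemish is a sign slip at the end: with your choices $\varphi^2 X(\varphi)\,g(Y,Y/X)$ is strictly \emph{positive} at $p$, not negative, but either way it is nonzero and the contradiction stands.
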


\begin{proof}
Suppose that there is a non zero vector field $X$ such that $\varphi X$ is stationary
for all $\varphi\in C^\infty(M)$.
Then,
$$
\mathfrak{W}(Y,Y,\varphi X)=0,
\,\,\,\,\,\forall Y\in \mathcal{X}, \forall \varphi\in C^\infty(M),
$$
by Lemma \ref{frakW}-(1) and so
$$
\varphi X(\varphi^2)g( Y,Y/X)=0,
\,\,\,\,\,\,\,\,\,\forall Y\in \mathcal{X},\forall \varphi\in C^\infty(M)
$$
by Lemma \ref{frakW}-(2).
From this it follows that
$\varphi X(\varphi^2)$ vanishes outside the set of zeroes of $X$, for all $\varphi$.
But $\varphi X(\varphi^2)$ also vanishes in the set of zeroes of $X$.
Therefore
$$
\varphi X(\varphi^2)=0,
\,\,\,\,\,\,\,\,\forall \varphi\in C^\infty(M).
$$
From this we obtain $X=0$ which is absurd by hypothesis.
This contradiction proves the result.
\end{proof}

Now we can state the main result of this subsection.

\begin{thm}
\label{thBB}
The $2$-Riemannian pseudoconnections have no Koszul derivatives.
\end{thm}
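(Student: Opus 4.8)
The plan is to slice $\nabla^g$ along its last argument, turning it into an honest $\Omega^1(M)$-pseudoconnection to which Lemmas \ref{le1} and \ref{kupeli} apply, and then to deduce from a hypothetical Koszul derivative that \emph{every} vector field on $M$ would be stationary with respect to $g$ --- contradicting Lemma \ref{le2}.

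Concretely, fix a $2$-Riemannian metric $g$ on $M$ and a vector field $W\in\mathcal{X}$, and set $\nabla^W\colon\mathcal{X}\times\mathcal{X}\to\Omega^1(M)$, $\nabla^W_XY(Z)=\nabla^g_XY(Z,W)$; recall also $P^W\colon\mathcal{X}\to\Omega^1(M)$, $P^W(Y)(Z)=g(Y,Z/W)$, from Lemma \ref{kupeli}, which is symmetric by Definition \ref{2-inner}-(3). First I would check that $\nabla^W$ is a torsion free compatible $\Omega^1(M)$-pseudoconnection of $TM$ with principal homomorphism $P^W$. This is a routine descent: restricting to the last slot $W$ the defining identities of $\nabla^g$, namely $\nabla^g_{\varphi X}Y=\varphi\nabla^g_XY$, $\nabla^g_X(\varphi Y)=X(\varphi)P^g(Y)+\varphi\nabla^g_XY$, torsion-freeness $\nabla^g_XY-\nabla^g_YX=P^g([X,Y])$, and compatibility $XP^g(Y)(Z,W)=\nabla^g_XY(Z,W)+\nabla^g_XZ(Y,W)$, and using $P^g(A)(B,W)=g(A,B/W)=P^W(A)(B)$, yields exactly the corresponding identities for $\nabla^W$ and $P^W$. (Equivalently, $\nabla^W$ is the Levi--Civita pseudoconnection $\nabla^{P^W}$ of Proposition \ref{levi-civita}, since formula (\ref{pseudoconnection}) with the last entry held equal to $W$ is verbatim the formula in the proof of that proposition for $k=1$ and $P=P^W$.)

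Now suppose, for contradiction, that $\nabla^g$ admits a Koszul derivative $D\colon\mathcal{X}\times\mathcal{X}\to\mathcal{X}$. Since the principal homomorphism of $\nabla^g$ is $P^g$ with $P^g(A)(B,C)=g(A,B/C)$, the factorization $\nabla^g=P^g\circ D$ gives
$$\nabla^W_XY(Z)=\nabla^g_XY(Z,W)=g(D_XY,Z/W)=P^W(D_XY)(Z),$$
so the same $D$ is a Koszul derivative of $\nabla^W$. By Lemma \ref{le1}, $P^W$ --- the principal homomorphism of $\nabla^W$ --- is a stationary homomorphism, hence $W$ is a stationary vector field for $g$ by Lemma \ref{kupeli}. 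Since $W\in\mathcal{X}$ was arbitrary, every vector field on $M$ would be stationary with respect to $g$. But taking any nonzero $X\in\mathcal{X}$ (e.g. a bump function times a coordinate vector field), Lemma \ref{le2} furnishes $\varphi\in C^\infty(M)$ with $\varphi X$ non-stationary, a contradiction. Therefore $\nabla^g$ has no Koszul derivative, which is the claim.

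I expect the only genuine step to be the verification in the second paragraph that the slice $\nabla^W$ inherits from $\nabla^g$ the pseudoconnection axioms, torsion-freeness and compatibility (equivalently, the term-by-term identification of (\ref{pseudoconnection}) with the Proposition \ref{levi-civita} formula); everything after that is an assembly of Lemmas \ref{le1}, \ref{kupeli} and \ref{le2}.
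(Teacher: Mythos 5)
Your argument is correct and is essentially the paper's own proof: both fix the last slot of $\nabla^g$ to obtain an $\Omega^1(M)$-pseudoconnection with principal homomorphism $P^W$ (the paper calls it $\nabla^X$ with $\nabla^X_YZ(W)=\nabla_YZ(W,X)$), observe that a Koszul derivative of $\nabla^g$ descends to one of each slice, and then chain Lemma \ref{le1}, Lemma \ref{kupeli} and Lemma \ref{le2} to reach a contradiction. The only difference is that you spell out the routine verification that the slice inherits torsion-freeness and compatibility, which the paper asserts without proof.
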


\begin{proof}
Suppose that there is a $2$-Riemannian manifold $(M,g)$ whose $2$-Riemannian pseudoconnection
$\nabla=\nabla^{g}$ has a Koszul derivative $D$.
For all $X\in \mathcal{X}$ we define
$\nabla^X:\mathcal{X}\times\mathcal{X}\to \Omega^1(M)$ by
$$
\nabla^X_YZ(W)=\nabla_YZ(W,X),
\,\,\,\,\,\,\,\,\,\forall Y,Z\in \mathcal{X}.
$$
Then, $\nabla^X$ is a torsion free compatible $\Omega^1(M)$-pseudoconnection of $TM$ with principal
homomorphism $P^X$ as in Lemma \ref{kupeli}.
But
\[
\begin{array}{ccl}
P^X(D_YZ)(W)
& = &
g( D_YZ,W/X)\\
& = &
P(D_YZ)(W,X)\\
& = &
\nabla_YZ(W,X)\\
& = &
\nabla_Y^XZ(W)
\end{array}
\]
where $P=P^{g}$ above is the principal homomorphism of $\nabla$.
So $D$ is a Koszul derivative of $\nabla^X$. Therefore
$P^X$ is stationary by Lemma \ref{le1}.
Consequently every $X\in \mathcal{X}$ is stationary by Lemma \ref{kupeli} which contradicts
Lemma \ref{le2}.
This finishes the proof.
\end{proof}

\subsection{Non-existence of Koszul derivatives II}
In this subsection
we obtain another proof of Theorem \ref{thBB} based on the following observation:
The target module $\mathfrak{D}$ in the definition of $\mathfrak{D}$-pseudoconnection $\nabla$
on vector bundles $\xi$ over a manifold $M$ is not necessarily unique.
In fact, it may be replaced by a submodule of $\mathfrak{D}$ containing both $Im(\nabla)$ and $Im(P)$, where $P$ is the principal homomorphism of $\nabla$.
The best possible is $\mathfrak{D}(\nabla)$, the submodule of $\mathfrak{D}$ generated by $Im(\nabla)\cup Im(P)$.
There are examples where $\mathfrak{D}(\nabla)=Im(P)$
as in the case of the unique torsion free compatible $\Omega^1(M)$-pseudoconnection
of $TM$ with principal homomorphism
$P(X)(Y)=h(X,Y)$ induced by a Riemannian metric $h$ of $M$.
Therefore, in such a case we have the inclusion
$$
Im(\nabla)\subset Im(P).
$$
The situation for $2$-Riemannian pseudoconnections
will be completely different as
we shall prove in that case that
\begin{equation}
\label{identity}
Im(\nabla)\cap Im(P)=\{0\}.
\end{equation}
To prove it we shall need some previous lemmas.
Given a vector field $X$ we denote by $Sing(X)$ the set of zeroes of $X$.

Observe that if $X,Y\in \mathcal{X}$ satisfy
$Sing(X)\cup Sing(Y)=M$ then $[X,Y]=0$.
Indeed, it follows from the definition of $[X,Y](p)$,
$$
[X,Y](p)(\varphi)=X(p)(Y(\varphi))-Y(p)(X(\varphi)),
\,\,\,\,\forall \varphi\in C^\infty(M),
$$
that $[X,Y](p)=0$ if $p\in Sing(X)\cap Sing(Y)$.
Now suppose that $p\in M\setminus (Sing(X)\cap Sin(Y))$.
We can assume without loss of generality that $p\not\in Sing(X)$.
As $Sing(X)\cup Sing(Y)=M$ we have that $Y$
vanishes not only in $p$ but also in a neighborhood of $p$.
Hence $Y(\varphi)$ vanishes in such a neighborhood, for all $\varphi\in C^\infty(M)$, therefore $[X,Y](p)=0$
for all $p\in M\setminus(Sing(X)\cap Sing(Y))$
thus $[X,Y]=0$.

More consequences of the identity $Sing(X)\cup Sing(Y)=M$ are given below.

\begin{lemma}
\label{auxiliar0}
Let $(M,g)$ be a $2$-Riemannian manifold.
If $X,Y\in\mathcal{X}$ satisfy
$Sing(X)\cup Sing(Y)=M$, then
$$
g(X,Y/Z)=Xg(Y,Z/W)=g([Z,X],Y/W)=0,
\,\,\forall Z,W\in\mathcal{X}.
$$
\end{lemma}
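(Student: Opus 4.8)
The plan is to argue pointwise, using one elementary observation about the hypothesis $Sing(X)\cup Sing(Y)=M$: it says exactly that the open set $U:=M\setminus Sing(X)$ on which $X$ is nonzero is contained in $Sing(Y)$, and symmetrically the open set $V:=M\setminus Sing(Y)$ is contained in $Sing(X)$. Hence $Y$ vanishes identically on the open set $U$, and $X$ vanishes identically on the open set $V$. This is the same mechanism used just above to deduce $[X,Y]=0$; the point is that it upgrades ``vanishing on a singular set'' to ``vanishing on an open set'', which is what lets Lie brackets and directional derivatives detect it. Throughout I will also use that a $2$-inner product vanishes as soon as one of its first two entries is $0$: indeed $g(0,v/w)=2\,g(0,v/w)$ by Definition \ref{2-inner}-(4), so $g(0,v/w)=0$, and then $g(v,0/w)=g(0,v/w)=0$ by Definition \ref{2-inner}-(3).

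For $g(X,Y/Z)=0$: at any $p\in M$ one has $p\in Sing(X)$ or $p\in Sing(Y)$, so $X(p)=0$ or $Y(p)=0$, and the observation above gives $g_p(X(p),Y(p)/Z(p))=0$. For $g([Z,X],Y/W)=0$: since the Lie bracket is a local operator and $X\equiv 0$ on the open set $V$, also $[Z,X]\equiv 0$ on $V$; thus at $p\in V$ the identity holds because $[Z,X](p)=0$, while at $p\notin V$ it holds because then $p\in Sing(Y)$, so $Y(p)=0$.

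For $Xg(Y,Z/W)=0$: since $Y\equiv 0$ on the open set $U$, the smooth function $g(Y,Z/W)$ is identically $0$ on $U$, hence every directional derivative of it at a point of $U$ vanishes; in particular $\bigl(Xg(Y,Z/W)\bigr)(p)=0$ for every $p\in U$. At $p\notin U$ one has $X(p)=0$, so $\bigl(Xg(Y,Z/W)\bigr)(p)=X(p)\bigl(g(Y,Z/W)\bigr)=0$ trivially. Combining the two cases gives $Xg(Y,Z/W)\equiv 0$. I do not expect a genuine obstacle here: the entire content is the first-paragraph observation, and the remaining steps are routine bookkeeping with the linearity axioms of the $2$-inner product together with the locality of $[\cdot,\cdot]$ and of derivations.
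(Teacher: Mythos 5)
Your proof is correct and follows essentially the same route as the paper's: pointwise case analysis on whether $p$ lies in $Sing(X)$ or $Sing(Y)$, combined with the observation that $Y$ (resp.\ $X$) vanishes on the \emph{open} set $M\setminus Sing(X)$ (resp.\ $M\setminus Sing(Y)$), so that derivations and Lie brackets see the vanishing. Your extra verification that a $2$-inner product vanishes when a first-slot entry is $0$ is a detail the paper leaves implicit, but the argument is the same.
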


\begin{proof}
It follows from the definition that $g(X,Y/Z)=0$ for all $Z\in \mathcal{X}$ whenever $Sing(X)\cup Sing(Y)=M$.
On the other hand, $Xg(Y,Z/W)$
clearly vanishes at $Sing(X)$ and, since $Y$ vanishes at $M\setminus Sing(X)$
which is open, we obtain that $Xg(Y,Z/W)$ vanishes in $M\setminus Sing(X)$
as well. Hence $Xg(Y,Z/W)=0$ in $M\setminus Sing(X)$ therefore
$Xg(Y,Z/W)=0$.

Finally we consider $g([Z,X],Y/W)$
which clearly vanishes at $Sing(Y)$.
As $Sing(X)\cup Sing(Y)=M$ we have that $[Z,X]$ vanishes
at $M\setminus Sing(Y)$.
Hence $g([Z,X],Y/W)$ vanishes at $M\setminus Sing(Y)$ as well, so
$g([Z,X],Y/W)=0$.
\end{proof}

\begin{clly}
\label{auxiliar2}
If
$X,Y\in\mathcal{X}$ satisfy $Sing(X)\cup Sing(Y)=M$,
then $\nabla_XY=0$
for all $2$-Riemannian pseudoconnection $\nabla$ of $M$.
\end{clly}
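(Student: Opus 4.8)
The plan is to reduce the statement to the explicit formula (\ref{pseudoconnection}) for the $2$-Riemannian pseudoconnection. By definition every $2$-Riemannian pseudoconnection $\nabla$ of $M$ is $\nabla^g$ for some $2$-Riemannian metric $g$, so it is enough to fix such a $g$ and show that, under the hypothesis $\si(X)\cup\si(Y)=M$, each of the six terms on the right-hand side of (\ref{pseudoconnection}) vanishes for arbitrary $Z,W\in\mathcal{X}$. That yields $\nabla^g_XY(Z,W)=0$ for all $Z,W$, i.e.\ $\nabla_XY=0$ as an element of $\mathfrak{D}^2_0(M)$, which is the claim.

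First I would note that the hypothesis $\si(X)\cup\si(Y)=M$ is symmetric in $X$ and $Y$, so Lemma \ref{auxiliar0} may be applied with $X$ and $Y$ in either order. With this in mind the six summands of (\ref{pseudoconnection}) are disposed of as follows. The term $Xg(Y,Z/W)$ vanishes directly by Lemma \ref{auxiliar0}. The term $Yg(Z,X/W)$ equals $Yg(X,Z/W)$ by symmetry of $g$ in its first two arguments (Definition \ref{2-inner}(3)), and this vanishes by Lemma \ref{auxiliar0} applied with $X$ and $Y$ interchanged. The term $Zg(X,Y/W)$ vanishes because $g(X,Y/W)$ is already the zero function by Lemma \ref{auxiliar0}, hence so is its derivative along $Z$. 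The term $g([Z,X],Y/W)$ vanishes directly by Lemma \ref{auxiliar0}. The term $g([Y,Z],X/W)$ equals $-g([Z,Y],X/W)$ by antisymmetry of the Lie bracket together with $\mathbb{R}$-linearity of $g$ in its first slot (Definition \ref{2-inner}(4)), and this vanishes by Lemma \ref{auxiliar0} with $X$ and $Y$ interchanged.

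The one remaining term, $g([X,Y],Z/W)$, I would handle with the observation made just before Lemma \ref{auxiliar0}: $\si(X)\cup\si(Y)=M$ forces $[X,Y]=0$, so by $\mathbb{R}$-linearity of $g$ in the first argument $g([X,Y],Z/W)=g(0,Z/W)=0$. Having shown all six terms vanish, formula (\ref{pseudoconnection}) gives $\nabla^g_XY(Z,W)=0$ for all $Z,W\in\mathcal{X}$, completing the proof. I do not expect any genuine obstacle here: the content is carried entirely by Lemma \ref{auxiliar0} and the pre-Lemma observation that $[X,Y]=0$. The only point requiring care is the bookkeeping, namely matching each of the six summands of (\ref{pseudoconnection}) to the correct instance of Lemma \ref{auxiliar0}, using the symmetry of the $2$-inner product in its first two variables and the antisymmetry of the bracket to bring each term into the precise shape covered by that lemma, and keeping in mind that the lemma's hypothesis is symmetric in $X$ and $Y$ so the roles may be exchanged freely.
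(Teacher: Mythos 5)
Your proposal is correct and follows essentially the same route as the paper: apply formula (\ref{pseudoconnection}) and kill each of the six summands using Lemma \ref{auxiliar0} (together with the observation that $\si(X)\cup\si(Y)=M$ forces $[X,Y]=0$). The paper states this in one line; your explicit bookkeeping, including the use of symmetry of $g$ and antisymmetry of the bracket to put each term in the form covered by the lemma, is exactly the verification the paper leaves implicit.
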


\begin{proof}
It follows from (\ref{pseudoconnection}) that
if $g$ is the $2$-Riemannian metric associated to $\nabla$, i.e.,
$\nabla=\nabla^g$ then
$$
\nabla_XY(Z,W)=\frac{1}{2}\{Xg(Y,Z/W)+Yg(Z,X/W)-Zg(X,Y/W)+
$$
$$
g([X,Y],Z/W)
+g([Z,X],Y/W)-g([Y,Z],X/W)\},
\,\,\,\,\,\forall X,Y,Z,W\in\mathcal{X}.
$$
Then, the result follows for
the six summands in the right-hand side of the
expression above vanish by Lemma \ref{auxiliar0} whenever $Sing(X)\cup Sing(Y)=M$.
\end{proof}

For the next lemma recall the auxiliary map $\mathfrak{W}$ in Lemma \ref{frakW}.

\begin{lemma}
\label{auxiliar1}
If $(M,g)$ is a $2$-Riemannian manifold, then the following properties
are equivalent for all $X,Y\in\mathcal{X}$:
\begin{description}
\item[(L1)]
$Sing(X)\cup Sing(Y)=M$.
\item[(L2)]
$g(X,Y/Z)=0$ for all $Z\in \mathcal{X}$.
\item[(L3)]
$\mathfrak{W}(X,Y,Z)=0$ for all $Z\in \mathcal{X}$.
\end{description}
\end{lemma}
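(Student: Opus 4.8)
The plan is to prove the implications $(L1)\Rightarrow(L2)$, $(L2)\Rightarrow(L1)$, $(L1)\Rightarrow(L3)$ and $(L3)\Rightarrow(L2)$; since $(L2)\Rightarrow(L1)\Rightarrow(L3)\Rightarrow(L2)$ closes a loop and $(L1)\Leftrightarrow(L2)$ is already among these, all three statements are then equivalent. The implication $(L1)\Rightarrow(L2)$ requires no work: it is the first assertion of Lemma \ref{auxiliar0}. For $(L1)\Rightarrow(L3)$ I would again appeal to Lemma \ref{auxiliar0}. Assuming $(L1)$, the summand $Zg(X,Y/Z)$ of $\mathfrak{W}(X,Y,Z)$ vanishes because $g(X,Y/Z)\equiv 0$; the summand $g([Z,X],Y/Z)$ vanishes by the last equality of Lemma \ref{auxiliar0} with $W=Z$; and $g(X,[Z,Y]/Z)=g([Z,Y],X/Z)$ by Definition \ref{2-inner}-(3), which vanishes by the same lemma applied with the roles of $X$ and $Y$ exchanged (its hypothesis is symmetric in $X,Y$). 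Hence $\mathfrak{W}(X,Y,\cdot)\equiv 0$, i.e. $(L3)$.

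For $(L3)\Rightarrow(L2)$ I would substitute $\varphi Z$ for $Z$ in $(L3)$ and use Lemma \ref{frakW}-(2): since both $\mathfrak{W}(X,Y,\varphi Z)$ and $\mathfrak{W}(X,Y,Z)$ vanish, that identity collapses to $\varphi Z(\varphi^{2})\,g(X,Y/Z)=0$ for all $\varphi\in C^{\infty}(M)$ and $Z\in\mathcal{X}$. To conclude $g(X,Y/Z)\equiv 0$, fix $p$: if $Z(p)=0$ then $g_{p}(X(p),Y(p)/Z(p))=0$ by the degree-two homogeneity $g(e_{1},e_{2}/\alpha_{3}e_{3})=\alpha_{3}^{2}g(e_{1},e_{2}/e_{3})$ of Lemma \ref{sl-dgw}-(1) with $\alpha_{3}=0$; if $Z(p)\neq 0$, pick $\varphi\in C^{\infty}(M)$ with $\varphi(p)\neq 0$ and $(Z\varphi)(p)\neq 0$ (choose any function on which the derivation $Z(p)$ does not vanish and add a constant), so that $\bigl(\varphi Z(\varphi^{2})\bigr)(p)=2\varphi(p)^{2}(Z\varphi)(p)\neq 0$, forcing $g_{p}(X(p),Y(p)/Z(p))=0$. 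Thus $(L2)$ holds.

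The genuinely nontrivial step, and the one I expect to be the main obstacle, is $(L2)\Rightarrow(L1)$. As $Z(p)$ ranges over all of $T_{p}M$ when $Z$ ranges over $\mathcal{X}$, $(L2)$ says $g_{p}(X(p),Y(p)/w)=0$ for every $p$ and every $w\in T_{p}M$; so it suffices to show that a $2$-inner product $g_{p}$ satisfying $g_{p}(u,v/w)=0$ for all $w$ forces $u=0$ or $v=0$, for then every $p$ lies in $Sing(X)\cup Sing(Y)$, which is $(L1)$. Arguing pointwise, suppose $u\neq 0$ and $v\neq 0$. If $v$ is a nonzero multiple of $u$, choose $w$ linearly independent of $u$; then $g_{p}(u,v/w)$ is a nonzero multiple of $g_{p}(u,u/w)>0$ (Definition \ref{2-inner}-(1)), a contradiction. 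If $u$ and $v$ are linearly independent, the key trick is to take $w=u+v$ and apply Lemma \ref{sl-dgw}-(2) with $e_{1}=u$, $e_{2}=v$: the two $2\times 2$ determinants there equal $1$ and $-1$, so $g_{p}(u,v/u+v)=-g_{p}(u,u/v)<0$ (again by Definition \ref{2-inner}-(1)), a contradiction. Hence $u=0$ or $v=0$, and $(L1)$ follows. This step uses $\dim M\geq 2$, so that a vector independent of a given nonzero one exists; for $\dim M=1$ every $2$-Riemannian metric vanishes identically, and one takes $\dim M\geq 2$.
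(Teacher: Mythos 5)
Your proof is correct and follows essentially the same route as the paper: the same implication cycle, the same appeals to Lemmas \ref{auxiliar0} and \ref{frakW}, and the same key identity $g(u,v/u+v)=-g(u,u/v)$ (the paper reaches it by taking $Z=X+Y$ in (L2) rather than via your case split, but the idea is identical). You merely fill in two details the paper leaves implicit, namely the pointwise argument extracting (L2) from $\varphi Z(\varphi^2)g(X,Y/Z)=0$ and the tacit assumption $\dim M\geq 2$.
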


\begin{proof}
We have that (L1) implies (L2) by Lemma \ref{auxiliar0}.

Let us prove that (L2) implies (L1).
Suppose that $X,Y\in\mathcal{X}$ satisfies (L2).
We know by Lemma \ref{sl-dgw}-(1) that $g(X,X/Y)=-g(X,Y/X+Y)$ so
$$
g(X,X/Y)=0
$$
by taking $Z=X+Y$ in (L2).
It then follows from the definition of $2$-inner product that
$X(p)$ and $Y(p)$ are linearly dependent for all $p\in M$.
Now fix $p\in M\setminus Sing(Y)$.
Since $X(p)$ and $Y(p)$ are linearly dependent
there are $a,b\in \mathbb{R}$ with
$a\neq 0$ or $b\neq 0$ so that
$$
aX(p)+bY(p)=0.
$$
Let us prove that $p\in Sing(X)$. Suppose by contradiction that $p\notin Sing(X)$.
Then, $X(p)\neq 0$ and so $b\neq 0$ for, otherwise, $b=0$ hence
$aX(p)=0$ yielding $a=0$ (because $X(p)\neq 0$) thus $a=b=0$ which contradicts the fact that $a\neq 0$ or $b\neq 0$.
Therefore, we can write
$$
Y(p)=\lambda X(p),
$$
where $\lambda=-\frac{a}{b}\in \mathbb{R}\setminus \{0\}$.
Replacing in (L2) evaluated at $p$ we get
$$
\lambda g(X(p),X(p)/Z(p))=g(X(p),Y(p)/Z(p))=0,
$$
for all $Z(p)\in T_pM$.
This necessarily implies $X(p)=0$ which contradicts $X(p)\neq 0$.
This contradiction shows that $p\in Sing(X)$.
Since $p\in Sing(X)$ we have that (L1) holds.

We have that (L1) implies (L3) for if
(L1) holds, then
$Zg(X,Y/Z)=g([Z,X],Y/Z)=g(X,[Z,Y]/Z)=0$ by
Lemma \ref{auxiliar0}.

To prove that (L3) implies (L2) we see from Lemma \ref{frakW} that if $\mathfrak{W}(X,Y,Z)=0$ for all $Z\in \mathcal{X}$ then
$$
\varphi Z(\varphi^2) g(X,Y/Z)=0,
\,\,\,\,\,\,\,\forall Z\in \mathcal{X}, \forall \varphi\in C^\infty(M)
$$
which is equivalent to (L2).
This proves the lemma.
\end{proof}

Now we can state the main result of this subsection.

\begin{thm}
\label{new-proof}
The identity (\ref{identity}) holds for every
$2$-Riemannian pseudoconnection $\nabla$.
\end{thm}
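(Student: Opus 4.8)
The plan is to show that any element lying in $Im(\nabla)\cap Im(P)$ must be zero, where $\nabla=\nabla^g$ is the $2$-Riemannian pseudoconnection of a $2$-Riemannian metric $g$ and $P=P^g$ its principal homomorphism. So suppose $d\in\mathfrak{D}^2_0(M)$ satisfies $d=\nabla_XY$ for some $X,Y\in\mathcal{X}$ and also $d=P^g(W)$ for some $W\in\mathcal{X}$; the goal is $d=0$. Unwinding the definitions, $P^g(W)(Z,T)=g(W,Z/T)$, so the hypothesis says
$$
\nabla_XY(Z,T)=g(W,Z/T),\qquad\forall Z,T\in\mathcal{X}.
$$
The right-hand side is $C^\infty(M)$-linear in $Z$ (that is the defining property of $\mathfrak{D}^2_0(M)$, which we already know $P^g$ lands in). The strategy is to exploit the structural rigidity of the right-hand side — it is a ``$g$-image'' — against the explicit formula \reff{pseudoconnection} for $\nabla_XY$, and then invoke Lemma \ref{auxiliar1}.

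First I would note that since $d=P^g(W)$, we have $d(W,T)=g(W,W/T)$ for all $T$, and by Lemma \ref{sl-dgw}-(1) (the identity $g(e_1,e_2/e_1)=0$ applied fiberwise) we also have $d(W,T)=\nabla_XY(W,T)$. More usefully, I would test the identity $\nabla_XY(Z,T)=g(W,Z/T)$ against well-chosen multiples. The key leverage is homogeneity: replacing $T$ by $\varphi T$ multiplies $g(W,Z/T)$ by $\varphi^2$ (Lemma \ref{sl-dgw}-(1), third identity), whereas the six terms of \reff{pseudoconnection} do not transform that way because of the derivative terms $Xg(Y,Z/W)$ etc.; comparing the $\varphi$-dependence forces relations. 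Concretely, the cleanest route is to show that $d$ being simultaneously in $Im(P)$ forces $g(W,Z/T)$ to satisfy $\mathfrak{W}(W,\cdot,\cdot)$-type constraints, i.e. to feed the situation into Lemma \ref{auxiliar1}. Indeed, the equivalence (L2) $\Leftrightarrow$ (L3) there says that $g(W,Z/T)=0$ for all $Z,T$ precisely when $\mathfrak{W}(W,Z,T)=0$ for all $Z,T$, and (L1)--(L3) are all equivalent to $Sing(W)\cup Sing(Z)=M$ for the relevant $Z$. So it suffices to produce, from the fact that $g(W,\cdot/\cdot)=\nabla_XY$ is also of the form $\nabla$(something), enough vanishing to conclude $W$ has full singular set paired with everything, hence $g(W,\cdot/\cdot)\equiv 0$, i.e. $d=0$.

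The concrete mechanism I expect to use: apply the scaling in the first slot as well. Since $d=P^g(W)$ and $P^g$ is a homomorphism, $d(Z,T)=g(W,Z/T)$ is additive and $C^\infty(M)$-linear in $Z$; but $d=\nabla_XY$, and by Theorem \ref{property-curvature}-type bilinearity / the explicit formula, one can also scale $Y\mapsto \varphi Y$ and $X\mapsto \varphi X$ to generate a family of identities. Matching the ``tensorial in $Z$'' behavior of the left side against the derivative terms on the right (which are genuinely first-order in $Z$ only through brackets) pins down that the derivative contributions must cancel, and what survives is an algebraic identity expressing $g(W,Z/T)$ through brackets, which is exactly the statement $\mathfrak{W}(W,Z,T)=0$; then Lemma \ref{auxiliar1} gives (L2), i.e. $g(W,Z/T)=0$ for all $Z,T$, hence $d=P^g(W)=0$.

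\textbf{The main obstacle} I anticipate is the bookkeeping in matching the $\varphi$-dependences of the two expressions for $d$: one must be careful that the six terms of \reff{pseudoconnection} are themselves not individually tensorial, so the cancellation argument has to be organized so that the non-tensorial parts of $\nabla_XY$ are matched against the (absent) non-tensorial parts of $g(W,\cdot/\cdot)$ without circularity. A clean way to sidestep brute force is to work locally: on any coordinate neighborhood pick $Z$ ranging over a frame, use that $g(W,Z/T)$ being $C^\infty(M)$-linear in $Z$ already places $d$ in $\mathfrak{D}^2_0(M)$ (no new info), and instead use the \emph{second}-slot scaling $T\mapsto\varphi T$, under which $g(W,Z/T)\mapsto\varphi^2 g(W,Z/T)$ while $\nabla_XY(Z,\varphi T)$ picks up, via \reff{pseudoconnection} and the analogue of Lemma \ref{frakW}-(2), a term proportional to $\varphi T(\varphi^2)$ times $g(\text{stuff}/T)$; forcing these to agree for \emph{all} $\varphi$ kills that extra term, and iterating this over enough $X,Y$ forces $g(W,\cdot/\cdot)\equiv 0$ exactly as in the proof of Lemma \ref{le2}. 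Thus the proof should run parallel to Lemma \ref{le2}'s argument, with the role of ``$X$ stationary for all $\varphi$'' replaced by ``$d$ in both images'', concluding $d=0$ and hence \reff{identity}.
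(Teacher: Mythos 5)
Your proposal assembles the right ingredients (the explicit formula \reff{pseudoconnection}, the map $\mathfrak{W}$, Lemma \ref{auxiliar1}) but it never actually executes the decisive step; as written it is a plan, not a proof. The computation that is supposed to produce ``$\mathfrak{W}=0$'' is only described, and the description is not correct: under the substitution $T\mapsto\varphi T$ the vector field $T$ occurs in $\nabla_XY(Z,T)$ only in the third slot of $g$ and in no bracket or derivative, so the discrepancy between $\nabla_XY(Z,\varphi T)$ and $\varphi^2\nabla_XY(Z,T)$ consists of terms $X(\varphi^2)g(Y,Z/T)$, $Y(\varphi^2)g(Z,X/T)$, $-Z(\varphi^2)g(X,Y/T)$, not a term ``proportional to $\varphi T(\varphi^2)$'' as in Lemma \ref{frakW}-(2) (that lemma scales the vector field that also sits inside the brackets, which is a different slot). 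You also aim the conclusion at the wrong pair of vector fields: you try to force $\mathfrak{W}(W,\cdot,\cdot)=0$ for the $W$ with $d=P^g(W)$, whereas what one can extract, and what suffices, is a constraint on the pair $X,Y$ with $d=\nabla_XY$.

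The step you are missing is much simpler than any scaling argument: since $d\in Im(P^g)$, the diagonal values vanish automatically, $d(Z,Z)=g(A,Z/Z)=0$ for all $Z$ by Lemma \ref{sl-dgw}-(1). On the other hand, restricting \reff{pseudoconnection} to $W=Z$ kills the terms $Xg(Y,Z/Z)$, $Yg(Z,X/Z)$ and $g([X,Y],Z/Z)$ (again by Lemma \ref{sl-dgw}-(1)), and what survives is exactly $\tfrac12\mathfrak{W}(X,Y,Z)$. Hence $\mathfrak{W}(X,Y,Z)=0$ for all $Z$, which is (L3) of Lemma \ref{auxiliar1}; by (L3)$\Rightarrow$(L1) one gets $Sing(X)\cup Sing(Y)=M$, and Corollary \ref{auxiliar2} then gives $\nabla_XY=0$, i.e.\ $d=0$. (Your scaling idea can be repaired into an alternative proof — equating the $\varphi$-inhomogeneous terms and choosing $\varphi$ with prescribed differential at a point eventually forces $g(X,Y/T)\equiv 0$, hence (L2) — but that requires a genuine pointwise linear-independence analysis that you have not supplied.)
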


\begin{proof}
Let $g$ be the $2$-Riemannian metric associated to $\nabla$.
Then, $P=P^g$ where
$$
P^g(X)(Y,Z)=g(X,Y/Z),
\,\,\,\,\,\,\,\,\forall X,Y,Z\in\mathcal{X}.
$$
Fix $d\in Im(P)\cap Im(\nabla)$.
Then, there are $A\in \mathcal{X}$ and $X,Y\in \mathcal{X}$
such that
$$
d=P^g(A) \,\,\,\,\,\mbox{ and }
\,\,\,\,\,\,
d=\nabla_{X}Y.
$$
In particular,
$$
d(Z,W)=\frac{1}{2}\{Xg(Y,Z/W)+Yg(Z,X/W)-Zg(X,Y/W)+
$$
$$
g([X,Y],Z/W)
+g([Z,X],Y/W)-g([Y,Z],X/W\},
\,\,\,\,\,\forall Z,W\in\mathcal{X}.
$$
Taking $Z=W$ above and observing that $d(Z,Z)=P^g(A)(Z,Z)=g(A,Z/Z)=0$,
$\forall Z\in \mathcal{X}$, we get
$$
0=g([Z,X],Y/Z)+g(X,[Z,Y]/Z)
-Zg(X,Y/Z),
\,\,\,\,\,\,\forall Z\in \mathcal{X}.
$$
So $X,Y$ satisfies (L3) in Lemma \ref{auxiliar1}.
Then, $X,Y$ also satisfy (L1) in Lemma \ref{auxiliar1} and so
$\nabla_XY=0$ by Corollary \ref{auxiliar2}.
As $d=\nabla_XY$ we conclude that $d=0$ hence $Im(\nabla)\cap Im(P)=\{0\}$.
\end{proof}

{\flushleft{\bf Second proof of Theorem \ref{thBB}: }}
If a pseudoconnection $\nabla$ has a Koszul derivative, then
it would satisfy $Im(\nabla)\subset Im(P)$ where $P$ is its principal homomorphism.
But if $\nabla$ were $2$-Riemannian then we would have $Im(\nabla)\cap Im(P)=\{0\}$ by Theorem \ref{new-proof}. Hence in such a case we would have
$Im(\nabla)=Im(\nabla)\cap Im(P)=\{0\}$
which implies $\nabla=0$, a contradiction.
Thus $\nabla$ has no Koszul derivatives.
\qed

It seems that for every
$2$-Riemannian pseudoconnection $\nabla$ there is a direct sum
$\mathfrak{D}(\nabla)=Im(P)\oplus \mathfrak{I}(\nabla)$,
where $\mathfrak{I}(\nabla)$ is the submodule of $\mathfrak{D}$ generated by $Im(\nabla)$.

\subsection{Adapted ordinary pseudoconnections}
We have seem at the beginning of Section \ref{$2$-Riem-pseudo} how to associate a
$\mathfrak{D}^2_0$-pseudoconnection to any $2$-Riemannian manifold $(M,g)$.
It would be better however to associate an ordinary pseudoconnection instead.
A possible problem for such an association is
to give a better definition of compatibility between
$2$-Riemannian metrics and ordinary pseudoconnections.
The reasonable approach to obtain such a kind of compatibility is
to compute derivatives like $Xg(Y,Z/W)$ for arbitrary vector fields $X,Y,Z,W\in\mathcal{X}$.

As a motivation we shall do it in the case when
$g$ is a simple $2$-Riemannian metric generated by a Riemannian metric $h$ of $M$.
First of all observe that
\[
\begin{array}{ccl}
Xg(Y,Y/Z)
& = &
X(h(Y,Y)h(Z,Z)-h^2(Y,Z))\\
& = &
2h(\theta_XY,Y)h(Z,Z)+2h(\theta_XZ,Z)h(Y,Y)-\\
& &
2h(Y,Z)(h(\theta_XY,Z)+h(\theta_XZ,Y)),
\end{array}
\]
where $\theta$ is the Riemannian connection of $h$.
Hence
\[
\begin{array}{ccl}
Xg(Y,Y/Z)
& = &
2(h(\theta_XY,Y)h(Z,Z)-h(Y,Z)h(\theta_XY,Z))+\\
& &
2(h(\theta_XZ,Z)h(Y,Y)-h(Y,Z)h(\theta_XZ,Y))\\
& = &
2g(\theta_XY,Y/Z)+2g(\theta_XZ,Z/Y).
\end{array}
\]
Using it we obtain both
\[
\begin{array}{ccl}
Xg((Y+Z,Y+Z/W)
& = &
2g(\theta_X(Y+Z),Y+Z/W)+2g(\theta_XW,W/Y+Z)\\
& = &
2g(\theta_XY,Y/W)+2g(\theta_XY,Z/W)+2g(\theta_XZ,Y/W)+\\
& &
2g(\theta_XZ,Z/W)+
2g(\theta_XW,W/Y+Z)
\end{array}
\]
and
\[
\begin{array}{ccl}
Xg(Y+Z,Y+Z,/W)
& = &
X(g(Y,Y/W)+g(Z,Z/W)+2g(Y,Z/W))\\
& = &
2g(\theta_XY,Y/W)+2g(\theta_XW,W/Y)+2g(\theta_XZ,Z/W)\\
& &
2g(\theta_XW,W/Z)+2Xg(Y,Z/W),
\end{array}
\]
for all $X,Y,Z,W\in \mathcal{X}$.
Therefore, $\theta$ satisfies the identity
\begin{equation}
\label{adapted}
Xg(Y,Z/W)=g(\theta_XY,Z/W)+g(Y,\theta_XZ/W)+g(\theta_XW,W/Y+Z)-
\end{equation}
$$
\quad \quad \quad \quad \quad \quad \quad \quad g(\theta_XW,W/Y)-g(\theta_XW,W/Z),
\,\,\,\,\,\,\forall X,Y,Z,W\in\mathcal{X}.
$$

Let us use this identity as definition.

\begin{defi}
\label{defi-adapted}
An ordinary pseudoconnection $\theta$ of $M$ is {\em adapted}
to $g$ if
(\ref{adapted}) holds for all $X,Y,Z,W\in \mathcal{X}$.
\end{defi}

We have then proved the following.

\begin{prop}
\label{adapted-prop}
Let $g$ be a simple $2$-Riemannian metric of $M$ generated by a Riemannian metric $h$. Then, the Riemannian connection of $h$ is an adapted
torsion-free ordinary pseudoconnection of $g$.
\end{prop}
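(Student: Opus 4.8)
The plan is to notice that the bulk of the work has already been carried out in the discussion preceding Definition~\ref{defi-adapted}; what remains is to package it as a proof. Let $\theta$ denote the Levi-Civita connection of $h$. Being the Riemannian connection, $\theta$ is an ordinary connection of $TM$, hence in particular an ordinary pseudoconnection (with principal homomorphism the identity), and it is torsion-free in the classical sense $\theta_XY-\theta_YX=[X,Y]$, which is exactly torsion-freeness in the sense of this paper when the principal homomorphism is the identity. So the only thing left to check is that $\theta$ is adapted to $g$, i.e. that identity~(\ref{adapted}) holds for all $X,Y,Z,W\in\mathcal{X}$.

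First I would record the two basic properties of $\theta$ that are needed: metric compatibility $Xh(Y,Z)=h(\theta_XY,Z)+h(Y,\theta_XZ)$ together with the ordinary Leibniz rule. Feeding these into the defining formula~(\ref{simple}) of the simple metric $g=g^h$ and differentiating yields, after a short computation, the ``diagonal'' identity $Xg(Y,Y/Z)=2g(\theta_XY,Y/Z)+2g(\theta_XZ,Z/Y)$ for all $X,Y,Z\in\mathcal{X}$; this is precisely the computation displayed in the text. Next I would polarize in the first slot: substituting $Y\mapsto Y+Z$ into the diagonal identity and expanding $g$ bilinearly in its first two arguments gives one expression for $Xg(Y+Z,Y+Z/W)$, while writing $g(Y+Z,Y+Z/W)=g(Y,Y/W)+2g(Y,Z/W)+g(Z,Z/W)$ and applying the diagonal identity to the two pure terms gives a second expression. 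Equating the two and cancelling the common terms isolates $Xg(Y,Z/W)$ and produces exactly the right-hand side of~(\ref{adapted}). Since $X,Y,Z,W$ were arbitrary, $\theta$ is adapted to $g$, and the proposition follows.

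The computation is entirely routine; the only place to be careful is the polarization bookkeeping — in particular keeping track of which terms are symmetric in $(Y,Z)$ and which carry $W$ in the ``wrong'' slot — so that the residual terms after cancellation assemble into precisely the combination $g(\theta_XW,W/Y+Z)-g(\theta_XW,W/Y)-g(\theta_XW,W/Z)$ appearing in~(\ref{adapted}). There is no genuine obstacle: the substance of the statement is the identity already derived in the text, and the proof only needs to invoke it together with the observation that the Levi-Civita connection is torsion-free.
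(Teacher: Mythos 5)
Your proposal is correct and follows essentially the same route as the paper: the text preceding Definition~\ref{defi-adapted} derives the diagonal identity $Xg(Y,Y/Z)=2g(\theta_XY,Y/Z)+2g(\theta_XZ,Z/Y)$ from metric compatibility of the Levi-Civita connection and then polarizes via $Xg(Y+Z,Y+Z/W)$ computed two ways to obtain~(\ref{adapted}), exactly as you describe. The observation that the Levi-Civita connection is torsion-free and an ordinary pseudoconnection with identity principal homomorphism completes the argument, so there is nothing missing.
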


\begin{lemma}
\label{adapted-lemma}
An ordinary pseudoconnection $\theta$ of $M$
is adapted to $g$ if and only if $Xg(Y,Y/Z)=
2g(\theta_XY,Y/Z)+2g(\theta_XZ,Z/Y)$, $\forall X,Y,Z\in \mathcal{X}$.
\end{lemma}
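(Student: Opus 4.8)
The plan is to prove this as a polarization identity, essentially reversing the computation that produced (\ref{adapted}) in the simple case. Both implications come down to the algebraic properties of $2$-inner products recorded in Definition \ref{2-inner} and Lemma \ref{sl-dgw}-(1), together with the $\mathbb{R}$-bilinearity of an ordinary pseudoconnection.

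For the ``only if'' direction I would start from (\ref{adapted}) and specialize $Z=Y$. Using property (3) of Definition \ref{2-inner} (symmetry of $g$ in its first two slots) the terms $g(\theta_XY,Y/W)$ and $g(Y,\theta_XY/W)$ coincide, and using the scaling identity $g(u,v/\alpha w)=\alpha^2 g(u,v/w)$ from Lemma \ref{sl-dgw}-(1) the term $g(\theta_XW,W/Y+Y)$ equals $4g(\theta_XW,W/Y)$. Collecting the terms, (\ref{adapted}) with $Z=Y$ collapses to $Xg(Y,Y/W)=2g(\theta_XY,Y/W)+2g(\theta_XW,W/Y)$, which is the claimed identity after renaming $W$ to $Z$.

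For the converse I would polarize. Assuming $Xg(Y,Y/Z)=2g(\theta_XY,Y/Z)+2g(\theta_XZ,Z/Y)$ for all $X,Y,Z$, fix $X,Y,Z,W$ and compute $Xg(Y+Z,Y+Z/W)$ in two ways. First, apply the quadratic hypothesis with $Y+Z$ in the first two slots; since $\theta$ is $\mathbb{R}$-bilinear, $\theta_X(Y+Z)=\theta_XY+\theta_XZ$, and expanding the $2$-inner product by properties (3)--(4) of Definition \ref{2-inner} splits $g(\theta_XY+\theta_XZ,Y+Z/W)$ into the four bilinear terms $g(\theta_XY,Y/W)$, $g(\theta_XY,Z/W)$, $g(\theta_XZ,Y/W)$, $g(\theta_XZ,Z/W)$, while the term $g(\theta_XW,W/Y+Z)$ is kept intact. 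Second, expand $g(Y+Z,Y+Z/W)=g(Y,Y/W)+2g(Y,Z/W)+g(Z,Z/W)$ (again by symmetry) and apply the hypothesis to each of the two ``square'' terms. Equating the two expressions, cancelling the common contribution $Xg(Y,Y/W)+Xg(Z,Z/W)$, dividing by $2$, and isolating $Xg(Y,Z/W)$ gives exactly (\ref{adapted}), once more using symmetry to write $g(\theta_XZ,Y/W)=g(Y,\theta_XZ/W)$.

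I do not anticipate a genuine obstacle here; the work is pure bookkeeping. The one point to be careful about is that a $2$-inner product is additive in each of its first two slots but only quadratic (not additive) in its third slot, so the cross term $g(\theta_XW,W/Y+Z)$ must be carried along unexpanded throughout both directions of the argument.
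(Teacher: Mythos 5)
Your proof is correct and is essentially the computation the paper itself carries out (for the simple metric) in the passage immediately preceding Definition \ref{defi-adapted}: the ``only if'' direction is the specialization $Z=Y$ in (\ref{adapted}) using $g(u,v/2w)=4g(u,v/w)$, and the ``if'' direction is the same polarization of $Xg(Y+Z,Y+Z/W)$ that the paper uses to derive (\ref{adapted}) from the quadratic identity. Your cautionary remark that the third slot is quadratic rather than additive, so $g(\theta_XW,W/Y+Z)$ must be carried unexpanded, is exactly the right point to flag.
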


In view of Proposition \ref{adapted-prop} it would be interesting
to investigate existence and uniqueness of adapted ordinary pseudoconnections for a given $2$-Riemannian metric $g$.
In this direction we only have the following short result.
Recall that a $2$-Riemannian metric $\overline{g}$ is said to be
{\em conformally equivalent} to another $2$-Riemannian metric $g$ if
$\overline{g}=\lambda\cdot g$ for some positive map $\lambda\in C^\infty(M)$.

\begin{prop}
\label{adapted-lemma'}
Let $g$ be a $2$-Riemannian metric with an adapted ordinary pseudoconnection $\theta$.
If $\overline{g}=\lambda \cdot g$ is a $2$-Riemannian metric
conformally equivalent to $g$,
then the map $\overline{\theta}:\mathcal{X}\times\mathcal{X}\to\mathcal{X}$
define by
$$
\overline{\theta}_XY=\theta_XY+\frac{X(\lambda)}{4\lambda},
\,\,\,\,\,\,\,\forall X,Y\in\mathcal{X},
$$
is an adapted ordinary pseudoconnection of $\overline{g}$.
\end{prop}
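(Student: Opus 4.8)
The plan is to verify the criterion of Lemma \ref{adapted-lemma} for the pair $(\overline{\theta},\overline{g})$, after first checking that $\overline{\theta}$ really is an ordinary pseudoconnection of $M$. For the latter, the point is that the correction term $(X,Y)\mapsto \tfrac{X(\lambda)}{4\lambda}Y$ is $C^\infty(M)$-bilinear: it is $C^\infty(M)$-linear in $X$ because $X\mapsto X(\lambda)$ is a derivation, and it is visibly $C^\infty(M)$-linear in $Y$. Hence $\overline{\theta}_{\varphi X}Y=\varphi\,\overline{\theta}_XY$, while in the second slot the correction contributes nothing to the Leibniz defect, so $\overline{\theta}_X(\varphi Y)=X(\varphi)P(Y)+\varphi\,\overline{\theta}_XY$, where $P$ is the principal homomorphism of $\theta$. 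Thus $\overline{\theta}$ is an ordinary pseudoconnection of $M$, with the same principal homomorphism $P$ as $\theta$.

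By Lemma \ref{adapted-lemma} it then suffices to prove
$$
X\overline{g}(Y,Y/Z)=2\overline{g}(\overline{\theta}_XY,Y/Z)+2\overline{g}(\overline{\theta}_XZ,Z/Y),\qquad\forall\,X,Y,Z\in\mathcal{X}.
$$
I would compute the two sides separately. On the left, the Leibniz rule gives $X\overline{g}(Y,Y/Z)=X(\lambda)\,g(Y,Y/Z)+\lambda\,Xg(Y,Y/Z)$. On the right, linearity of the $2$-inner product in its first slot (Definition \ref{2-inner}-(4)) together with $\overline{g}=\lambda g$ yields
$$
2\overline{g}(\overline{\theta}_XY,Y/Z)=2\lambda\,g(\theta_XY,Y/Z)+\tfrac{X(\lambda)}{2}\,g(Y,Y/Z),
$$
and symmetrically $2\overline{g}(\overline{\theta}_XZ,Z/Y)=2\lambda\,g(\theta_XZ,Z/Y)+\tfrac{X(\lambda)}{2}\,g(Z,Z/Y)$. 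Adding the two, the terms $2\lambda g(\theta_XY,Y/Z)+2\lambda g(\theta_XZ,Z/Y)$ equal $\lambda\,Xg(Y,Y/Z)$ because $\theta$ is adapted to $g$ (Lemma \ref{adapted-lemma} once more), and the two correction terms sum to $X(\lambda)\,g(Y,Y/Z)$ after invoking the symmetry $g(Y,Y/Z)=g(Z,Z/Y)$ of Definition \ref{2-inner}-(2). The two sides then coincide, which is what we want.

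There is no genuine obstacle here; the whole argument is bookkeeping, and I would present it in the displayed-array format used elsewhere in the paper. The one point that deserves attention is the coefficient $\tfrac14$ in the definition of $\overline{\theta}$: it is forced by the confluence of three factors of $2$, namely the overall $2$ in the criterion of Lemma \ref{adapted-lemma}, the symmetric appearance of the correction in the $Y$- and $Z$-slots, and the collapse $g(Y,Y/Z)=g(Z,Z/Y)$. Together these produce exactly the term $X(\lambda)\,g(Y,Y/Z)$ needed to absorb the derivative of the conformal factor; any other coefficient would leave a nonzero residue, so $\tfrac14$ is the unique choice for which $\overline{\theta}$ of this form is adapted to $\overline{g}$.
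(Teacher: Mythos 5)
Your argument is correct and is essentially the paper's own proof: both verify the criterion of Lemma \ref{adapted-lemma} for $(\overline{\theta},\overline{g})$ by expanding $X\overline{g}(Y,Y/Z)$ via the Leibniz rule, invoking adaptedness of $\theta$ to $g$, and absorbing the two $\tfrac{X(\lambda)}{2}$--terms using the symmetry $g(Y,Y/Z)=g(Z,Z/Y)$. Your added remark that the coefficient $\tfrac14$ is forced, and your explicit check that the correction term is $C^\infty(M)$-bilinear (hence preserves the principal homomorphism), are both accurate and merely make explicit what the paper leaves as ``a direct computation.''
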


\begin{proof}
A direct computation shows that $\overline{\theta}$ is
an ordinary pseudoconnection of $M$ with the same principal homomorphism of $\theta$.
On the other hand, for all $X,Y,Z\in\mathcal{X}$ one has
\[
\begin{array}{ccl}
X\overline{g}(Y,Y/Z)
& = &
X(\lambda)g(Y,Y/Z)+\lambda Xg(Y,Y/Z)\\
&  &
  \\
& = &
X(\lambda)g(Y,Y/Z)+
\lambda(2g(\theta_XY,Y/Z)+2g(\theta_XZ,Z/Y))\\
&  &
  \\
& = &
\frac{X(\lambda)}{\lambda}\overline{g}(Y,Y/Z)+2\overline{g}(\theta_XY,Y/Z)+\overline{g}(\theta_XZ,Z/Y)\\
&  &
  \\
& = &
2\overline{g}(\frac{X(\lambda)}{4\lambda}Y+\theta_XY,Y/Z)+2\overline{g}
(\frac{X(\lambda)}{4\lambda}Z+\theta_XZ,Z/Y)\\
&  &
  \\
& = &
2\overline{g}(\overline{\theta}_XY,Y/Z)+2\overline{g}(\overline{\theta}_XZ,Z/Y).
\end{array}
\]
Therefore, $\overline{\theta}$ is adapted by Lemma \ref{adapted-lemma}.
\end{proof}

The above proposition cannot be used to construct non-simple
$2$-Riemmanian metrics with adapted ordinary pseudoconnections.
This is because $2$-Riemannian metrics conformally equivalent to
simple ones are simple too.
However it can be used to prove the existence of
$2$-Riemannian metrics in $\mathbb{R}^2$ exhibiting two different
adapted ordinary pseudoconnections.

Indeed, consider a positive map $f\in C^\infty(\mathbb{R}^2)$ whose differential
$df:\mathcal{X}\times\mathcal{X}\to\mathcal{X}$ as defined in (\ref{df}) is
non-zero.
Define the $2$-Riemannian metric $g=f\cdot g^{st}$ in $\mathbb{R}^2$ and
$\overline{\theta}:\mathcal{X}\times\mathcal{X}\to\mathcal{X}$ by
$$
\overline{\theta}_XY=\theta^{st}_XY+\frac{X(f)}{4f},
\,\,\,\,\,\,\,\,\,\forall X,Y\in \mathcal{X},
$$
where $\theta^{st}$ is the Riemannian connection of the standard Euclidean
product of $\mathbb{R}^2$.
It follows from Lemma \ref{adapted-lemma'} that $\overline{\theta}$
is an adapted ordinary pseudoconnection of $g$.

On the other hand, $g$ is simple since $g^{st}$ is.
Hence, by Proposition \ref{adapted-prop}, the Riemannian connection $\hat{\theta}$ of the Riemannian metric
generating $g$ is also an adapted ordinary pseudoconnection of $\overline{g}$.
But $\hat{\theta}$ is torsion free whereas
$\overline{\theta}$ is not because $
\overline{\theta}_XY-\overline{\theta}_YX=
[X,Y]+\frac{df(X,Y)}{4\lambda}$, for all $X,Y\in\mathcal{X}$.
Therefore $\overline{\theta}\neq\hat{\theta}$ and the result follows.
Despite it seems possible to prove the uniqueness of
adapted ordinary pseudoconnections but among the torsion free ones.

The next result explains how
adapted ordinary pseudoconnections can be used to compute
$2$-Riemannian pseudoconnections.
Its proof is a direct computation which is left to the reader.

\begin{prop}
\label{computa}
If $g$ is a $2$-Riemmanian metric with an adapted
torsion free ordinary connection $\theta$ then
the $2$-Riemannian pseudoconnection $\nabla^g$ of $g$ splits as
$\nabla^g=g*\theta+\Omega^{g,\theta}$,
where $(g*\theta)_XY(Z,W)=g(\theta_XY,Z/W)$
and
$$
\Omega^{g,\theta}_XY(Z,W)=
\frac{1}{2}\{
g(\theta_{Z-X}W,W/Y)-g(\theta_{X+Y}W,W/Z)+
g(\theta_{Z-Y}W,W/X)+
$$
$$
g(\theta_XW,W/Y+Z)+
g(\theta_YW,W/X+Z)-g(\theta_ZW,W/X+Y)\},
$$
for all $X,Y,Z,W\in \mathcal{X}$.
\end{prop}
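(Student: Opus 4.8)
The plan is to verify the claimed identity directly, starting from the explicit formula (\ref{pseudoconnection}) for the $2$-Riemannian pseudoconnection:
\[
2\nabla^g_XY(Z,W)=Xg(Y,Z/W)+Yg(Z,X/W)-Zg(X,Y/W)+g([X,Y],Z/W)+g([Z,X],Y/W)-g([Y,Z],X/W).
\]
First I would rewrite the three derivative terms $Xg(Y,Z/W)$, $Yg(Z,X/W)$ and $-Zg(X,Y/W)$ by applying the defining identity (\ref{adapted}) of an adapted pseudoconnection, used each time with the appropriate permutation of the first three arguments and always with $W$ kept in the third slot. Each such substitution produces one ``geodesic'' term of the form $g(\theta_\bullet\bullet,\bullet/W)$ together with three ``correction'' terms, each carrying one of $\theta_XW,\theta_YW,\theta_ZW$ in the first slot. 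Simultaneously I would use that $\theta$ is torsion free, i.e. $[X,Y]=\theta_XY-\theta_YX$, together with linearity of the $2$-inner product in the first slot (Definition \ref{2-inner}-(4)), to replace each of the three bracket terms $g([\,\cdot\,,\,\cdot\,],\,\cdot\,/W)$ by a difference of two geodesic terms.

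Next I would collect the resulting terms into two groups. Using the symmetry $g(u,v/w)=g(v,u/w)$ (Definition \ref{2-inner}-(3)) one checks that all geodesic terms cancel in pairs except for two surviving copies of $g(\theta_XY,Z/W)$: for example the term $g(\theta_XZ,Y/W)$ produced from $Xg(Y,Z/W)$ is killed by the one arising from $g([Z,X],Y/W)$, and likewise the would-be $\theta_YX$, $\theta_YZ$, $\theta_ZX$ and $\theta_ZY$ contributions cancel. Hence the geodesic part contributes exactly $2g(\theta_XY,Z/W)=2(g*\theta)_XY(Z,W)$. The nine remaining correction terms each involve one of $\theta_XW,\theta_YW,\theta_ZW$; grouping them accordingly and using linearity of the connection in its subscript together with linearity of $g$ in the first slot (so that, e.g., $-g(\theta_XW,W/Y)+g(\theta_ZW,W/Y)=g(\theta_{Z-X}W,W/Y)$, and similarly for the $\theta_YW$- and $\theta_ZW$-grouped pieces), one recognises precisely $2\Omega^{g,\theta}_XY(Z,W)$ with $\Omega^{g,\theta}$ as in the statement. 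Dividing by $2$ yields $\nabla^g=g*\theta+\Omega^{g,\theta}$.

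There is no conceptual obstacle here: the proof is entirely mechanical, the adapted identity (\ref{adapted}) and torsion-freeness being exactly the two inputs needed to trade every $Xg(\cdots)$ and every Lie bracket for expressions in $\theta$. The only real difficulty is the bookkeeping — there are roughly two dozen terms after the first step, and the main thing to get right is to keep the $g(\theta_\bullet\bullet,\bullet/W)$ terms rigorously separate from the $\theta_\bullet W$ terms and to track signs carefully through the cancellations. A convenient partial sanity check before handling general $Z,W$ is to test the identity on the diagonal $Z=W$, where several terms vanish outright by Lemma \ref{sl-dgw}-(1) and the statement reduces, up to the factor $\tfrac12$, to the map $\mathfrak{W}$ of Lemma \ref{frakW}.
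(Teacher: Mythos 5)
Your proposal is correct and is exactly the ``direct computation'' that the paper leaves to the reader: expand the Koszul-type formula (\ref{pseudoconnection}) for $\nabla^g$, replace the three derivative terms via the adapted identity (\ref{adapted}) and the three bracket terms via torsion-freeness, and observe that the $g(\theta_\bullet\bullet,\bullet/W)$ terms cancel down to $2g(\theta_XY,Z/W)$ while the nine $\theta_\bullet W$ correction terms regroup into $2\Omega^{g,\theta}_XY(Z,W)$. The cancellation pattern you describe checks out term by term, so the argument is complete.
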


Let us use this proposition to compute the $2$-Riemannian pseudoconnection $\nabla^{g^{st}}$ of the standard $2$-Riemmanian metric $g^{st}$ in $\mathbb{R}^2$. In such a case Lemma \ref{sl-dgw}-(2) implies
$
g^{st}(X,Y/ Z) =\det\left(\begin{matrix}
X_1&X_2\\
Z_1&Z_2
\end{matrix}
\right)\cdot
\det\left(\begin{matrix}
Y_1&Y_2\\
Z_1&Z_2
\end{matrix}
\right),
$
where $K=(K_1,K_2)$ are the coordinates
of $K\in \{X,Y,Z\}$.
Since the Riemannian connection
of the standard metric in $\mathbb{R}^2$ is given
by $\theta_XY=(X(Y_1),X(Y_2))$
we get from Proposition \ref{computa} the following formula
for $\nabla^{g^{st}}$:

\[
\begin{array}{ccl}
\nabla^{g^{st}}_XY(Z,W) & = &
\det
\left(
\begin{matrix}
Z_1&Z_2\\
W_1&W_2
\end{matrix}
\right)
\cdot
\det
\left(
\begin{matrix}
X(Y_1)&X(Y_2)\\
W_1&W_2
\end{matrix}
\right)+\\
& + & 
\frac{1}{2}
\cdot
\{
\det
\left(
\begin{matrix}
W_1&W_2\\
Y_1+Z_1&Y_2+Z_2
\end{matrix}
\right)
\cdot
\det
\left(
\begin{matrix}
X(W_1)&X(W_2)\\
Y_1+Z_1&Y_2+Z_2
\end{matrix}
\right)+\\
& + &
\det
\left(
\begin{matrix}
W_1&W_2\\
X_1+Z_1&X_2+Z_2
\end{matrix}
\right)
\cdot
\det
\left(
\begin{matrix}
Y(W_1)&Y(W_2)\\
X_1+Z_1&X_2+Z_2
\end{matrix}
\right)+\\
& + &
\det
\left(
\begin{matrix}
W_1&W_2\\
X_1+Y_1&X_2+Y_2
\end{matrix}
\right)
\cdot
\det
\left(
\begin{matrix}
Z(W_1)&Z(W_2)\\
X_1+Y_1&X_2+Y_2
\end{matrix}
\right)+\\
& + &
\det
\left(
\begin{matrix}
W_1&W_2\\
Y_1&Y_2
\end{matrix}
\right)
\cdot
\det
\left(
\begin{matrix}
(Z-X)(W_1)&(Z-X)(W_2)\\
Y_1&Y_2
\end{matrix}
\right)-\\
& - &
\det
\left(
\begin{matrix}
W_1&W_2\\
Z_1&Z_2
\end{matrix}
\right)
\cdot
\det
\left(
\begin{matrix}
(X+Y)(W_1)&(X+Y)(W_2)\\
Z_1&Z_2
\end{matrix}
\right)+\\
& + &
\det
\left(
\begin{matrix}
W_1&W_2\\
X_1&X_2
\end{matrix}
\right)
\cdot
\det
\left(
\begin{matrix}
(Z-Y)(W_1)&(Z-Y)(W_2)\\
X_1&X_2
\end{matrix}
\right)
\}.
\end{array}
\]

\subsection{Stationary vector fields}
As we have seen, the stationary vector fields played an important role
in the first proof of Theorem \ref{thBB}.
This motivates the question whether such vector
fields exist for a given $2$-Riemannian metric.
Here we consider the case of $2$-Riemannian metrics $g$ on open subsets of
$\mathbb{R}^2$ conformally equivalent to $g^{st}$.
In such a case we prove that the stationary vector fields $X$ are precisely the solutions of the differential equation
\begin{equation}
\label{div}
2div(X)+X(\ln(\lambda))=0,
\end{equation}
where $div(X)$ above is the divergence of $X$.
The proof is based on the following lemma which gives
the property of $g^{st}$ we shall need.

\begin{lemma}
\label{s2}
If $X,Y\in \mathcal{X}(\mathbb{R}^2)$ then
$$
Xg^{st}(Y,Y/X)-2\cdot g^{st}([X,Y],Y/X)=2\cdot div(X)\cdot g^{st}(Y,Y/X).
$$
\end{lemma}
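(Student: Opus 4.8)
The plan is to collapse the identity to a single scalar equation on $\mathbb{R}^2$ and then verify that equation. Write $X=(X_1,X_2)$ and $Y=(Y_1,Y_2)$ in the standard coordinates of $\mathbb{R}^2$ and set $w:=Y_1X_2-Y_2X_1$. Since $g^{st}$ is the simple $2$-Riemannian metric generated by the Euclidean product, $g^{st}(e_1,e_1/e_2)=1$ for the standard basis, so Lemma \ref{sl-dgw}-(2) yields
$$g^{st}(Y,Y/X)=w^{2},\qquad g^{st}([X,Y],Y/X)=\big([X,Y]_1X_2-[X,Y]_2X_1\big)\,w.$$
As $Xg^{st}(Y,Y/X)=X(w^{2})=2w\,X(w)$, the asserted formula is equivalent to $2w\big(X(w)-[X,Y]_1X_2+[X,Y]_2X_1\big)=2\operatorname{div}(X)\,w^{2}$, and therefore it follows at once — just multiply by $2w$ — from the scalar identity
$$X(w)=\operatorname{div}(X)\,w+[X,Y]_1X_2-[X,Y]_2X_1. \qquad (\star)$$

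So the entire content of the lemma is $(\star)$, which I would prove conceptually by reading $w$ as the value of the area form $\omega:=dx\wedge dy$ on the ordered pair $(Y,X)$, i.e.\ $w=\omega(Y,X)$. The Leibniz rule for the Lie derivative $\mathcal{L}_X$ applied to the function $\omega(Y,X)$ gives
$$X(w)=(\mathcal{L}_X\omega)(Y,X)+\omega([X,Y],X)+\omega(Y,[X,X]).$$
Here $[X,X]=0$, while $\omega([X,Y],X)=[X,Y]_1X_2-[X,Y]_2X_1$, and $\mathcal{L}_X\omega=\operatorname{div}(X)\,\omega$ is the standard characterization of divergence on $\mathbb{R}^2$ (equivalently, Cartan's formula applied to the closed form $\omega$), so $(\mathcal{L}_X\omega)(Y,X)=\operatorname{div}(X)\,w$. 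Combining these three facts is precisely $(\star)$.

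Readers who prefer to stay within elementary vector-field calculus can also obtain $(\star)$ by a short direct computation: writing $X=(a,b)$, $Y=(p,q)$ and using $[X,Y]=(X(p)-Y(a),\,X(q)-Y(b))$, one expands $X(w)=X(pb-qa)$ by the product rule and subtracts $[X,Y]_1X_2-[X,Y]_2X_1=b\big(X(p)-Y(a)\big)-a\big(X(q)-Y(b)\big)$; every term then cancels except those carrying the common factor $\partial_xa+\partial_yb=\operatorname{div}(X)$, leaving exactly $(pb-qa)\operatorname{div}(X)=w\,\operatorname{div}(X)$.

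I do not expect a genuine obstacle here: the nontrivial step is merely recognizing the reduction to $(\star)$, after which the lemma is elementary. The one place that needs care is the bookkeeping of signs and orientation — keeping $w=Y_1X_2-Y_2X_1$ (rather than its negative) consistently throughout, so that the sign of the $g^{st}([X,Y],Y/X)$ term in $(\star)$ comes out correctly. Once the reduction is in place, either route above closes the argument, and multiplying $(\star)$ through by $2w$ recovers the statement of Lemma \ref{s2}.
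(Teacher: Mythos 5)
Your proof is correct. The reduction to the scalar identity $(\star)$ is sound: with $w=Y_1X_2-Y_2X_1$ you correctly get $g^{st}(Y,Y/X)=w^2$ and $g^{st}([X,Y],Y/X)=\bigl([X,Y]_1X_2-[X,Y]_2X_1\bigr)w$ from Lemma \ref{sl-dgw}-(2), and you use $(\star)$ only in the direction of multiplying by $2w$, so there is no division-by-zero issue at the singularities of the pair $(X,Y)$. The identity $(\star)$ itself is exactly the Leibniz rule for $\mathcal{L}_X$ applied to $\omega(Y,X)$ with $\omega=dx\wedge dy$, together with $\mathcal{L}_X\omega=\operatorname{div}(X)\,\omega$; I checked your elementary expansion as well and the cancellation does leave $(pb-qa)(\partial_xa+\partial_yb)$.

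Your route is genuinely different from the paper's. The paper stays inside its own connection formalism: it writes $X,Y$ in coordinates, computes $g^{st}(\theta_XX,X/Y)+g^{st}(\theta_YX,Y/X)=\operatorname{div}(X)\,g^{st}(Y,Y/X)$ by an explicit two-by-two determinant calculation with the flat Riemannian connection $\theta$, then invokes torsion-freeness ($\theta_YX=\theta_XY-[X,Y]$) and the adaptedness identity of Proposition \ref{adapted-prop} (Lemma \ref{adapted-lemma}) to convert $g^{st}(\theta_XX,X/Y)+g^{st}(\theta_XY,Y/X)$ into $\tfrac12 Xg^{st}(Y,Y/X)$. That argument has the virtue of illustrating how the adapted-pseudoconnection machinery is actually used, which fits the paper's narrative. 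Your argument is shorter and more transparent: it identifies the lemma as a statement about the area form, and the appearance of $\operatorname{div}(X)$ is explained conceptually by $\mathcal{L}_X\omega=\operatorname{div}(X)\,\omega$ rather than emerging from a determinant cancellation. It also generalizes immediately to any conformal rescaling of $\omega$, which is essentially what Theorem \ref{thCC} needs. Either proof is acceptable.
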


\begin{proof}
Given $X,Y\in\mathcal{X}(\mathbb{R}^2)$ there are $C^\infty$ maps $a,b,c,d$ such that
$$
X=a\frac{\partial}{\partial x}+b\frac{\partial}{\partial y}
\,\,\,\,\,\,\mbox{ and }
\,\,\,\,\,\,Y=c\frac{\partial}{\partial x}+d\frac{\partial}{\partial y},
$$
where $\frac{\partial}{\partial x},\frac{\partial}{\partial y}$ is the standard basis
of $\mathcal{X}(\mathbb{R}^2)$.
By the definition of $g^{st}$ we have
$$
g^{st}\left( \frac{\partial}{\partial x},\frac{\partial}{\partial x}\bigg/\frac{\partial}{\partial y}
\right)=1
$$
and so
\begin{equation}
 \label{equs1}
g^{st}( Y,Y/X)=
\left[\det
\left(
\begin{matrix}
a & b\\
c & d
\end{matrix}
\right)
\right]^2
\end{equation}
by Lemma \ref{sl-dgw}-(2).
On the other hand, if $\theta$ is the Riemannian connection of the Euclidean metric in $\mathbb{R}^2$ then
$$
\theta_XX=X(a)\frac{\partial}{\partial x}+ X(b)\frac{\partial}{\partial y}
\,\,\,\,\mbox{ and }
\,\,\,\,\theta_YX=Y(a)\frac{\partial}{\partial x}+ Y(b)\frac{\partial}{\partial y}.
$$
See \cite{dcm}.
Therefore, applying Lemma \ref{sl-dgw}-(2) twice we get
$$
g^{st}(\theta_XX,X/Y)+g^{st}(\theta_YX,Y/X)=
\left[
\det
\left(
\begin{matrix}
X(a) & c\\
X(b) & d
\end{matrix}
\right)
-
\det
\left(
\begin{matrix}
Y(a) & a\\
Y(b) & b
\end{matrix}
\right)
\right]
\cdot
$$
$$
\det
\left(
\begin{matrix}
a & b\\
c & d
\end{matrix}
\right).
$$
But straightforward computations yield
$$
\det
\left(
\begin{matrix}
X(a) & c\\
X(b) & d
\end{matrix}
\right)=
ad\frac{\partial a}{\partial x}+bd\frac{\partial a}{\partial y}-ac\frac{\partial b}{\partial x}-
bc\frac{\partial b}{\partial y}
$$
and
$$
\det
\left(
\begin{matrix}
Y(a) & a\\
Y(b) & b
\end{matrix}
\right)=
bc\frac{\partial a}{\partial x}+bd\frac{\partial a}{\partial y}-ac\frac{\partial b}{\partial x}-
ad\frac{\partial b}{\partial y}.
$$
which together with (\ref{equs1}) yield
$$
div(X)\cdot g^{st}(Y,Y/X)=g^{st}(\theta_XX,X/Y)+g^{st}( \theta_YX,Y/X)
$$
since $div(X)=\frac{\partial a}{\partial x}+\frac{\partial b}{\partial y}$ is the divergence of $X$.
Therefore,
\[
\begin{array}{ccl}
div(X)\cdot g^{st}(Y,Y/X)
& = &
g^{st}(\theta_XX,X/Y)+g^{st}(\theta_YX,Y/X)\\
& = &
g^{st}(\theta_XX,X/Y)+g^{st}(\theta_XY,Y/X)-\\
& &
g^{st}([X,Y],Y/X).
\end{array}
\]
On the other hand, $\theta$ is an adapted ordinary pseudoconnection of $g^{st}$ by
Proposition \ref{adapted-prop}.
Hence
$$
g^{st}(\theta_XX,X/Y)+g^{st}(\theta_XY,Y/X)
=\frac{Xg^{st}(Y,Y/X)}{2}.
$$
Replacing above we get
$$
div(X)\cdot g^{st}(Y,Y/X)=
\frac{Xg^{st}(Y,Y/X)}{2}-
g^{st}([X,Y],Y/X)
$$
which implies the result.
\end{proof}

Now we can state the main result of this subsection.

\begin{thm}
\label{thCC}
Let $g=\lambda\cdot g^{st}$ be a $2$-Riemannian metric conformally equivalent to $g^{st}$. Then, the stationary vector fields with respect to
$g$ are precisely the solutions $X$ of (\ref{div}).
\end{thm}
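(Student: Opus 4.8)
The plan is to reduce everything to Remark \ref{rk2} and Lemma \ref{s2}, the point being that the two factors $X(\lambda)$ and $Xg^{st}(Y,Y/X)-2g^{st}([X,Y],Y/X)$ arising from the conformal change combine into a single multiple of $g^{st}(Y,Y/X)$. By Remark \ref{rk2}, a vector field $X$ (on the domain of $g$) is stationary with respect to $g$ if and only if $Xg(Y,Y/X)-2g([X,Y],Y/X)=0$ for all $Y\in\mathcal{X}$. Since $g=\lambda\cdot g^{st}$ we have $g(Y,Y/X)=\lambda\,g^{st}(Y,Y/X)$ and $g([X,Y],Y/X)=\lambda\,g^{st}([X,Y],Y/X)$, so the Leibniz rule gives
$$Xg(Y,Y/X)-2g([X,Y],Y/X)=X(\lambda)\,g^{st}(Y,Y/X)+\lambda\bigl(Xg^{st}(Y,Y/X)-2g^{st}([X,Y],Y/X)\bigr).$$
Applying Lemma \ref{s2} to the parenthesis, this equals $\bigl(X(\lambda)+2\lambda\,div(X)\bigr)g^{st}(Y,Y/X)$. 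Hence $X$ is stationary with respect to $g$ if and only if $\bigl(X(\lambda)+2\lambda\,div(X)\bigr)g^{st}(Y,Y/X)=0$ for all $Y\in\mathcal{X}$.

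The second step is to show that this condition is equivalent to the pointwise identity $X(\lambda)+2\lambda\,div(X)\equiv 0$. One implication is immediate. For the converse, set $h=X(\lambda)+2\lambda\,div(X)\in C^\infty$ and let $\Omega_0=\{p:X(p)\neq0\}$, an open set. Given $p\in\Omega_0$, since the dimension is $2$ we may pick $Y$ with $Y(p)$ linearly independent from $X(p)$ (e.g. one of the two coordinate fields), so $g^{st}_p(Y(p),Y(p)/X(p))>0$ by Definition \ref{2-inner}-(1), forcing $h(p)=0$. Thus $h$ vanishes on $\Omega_0$, hence on $\overline{\Omega_0}$ by continuity; and on $\interior(\si(X))=\Omega\setminus\overline{\Omega_0}$ we have $X\equiv 0$, so $X(\lambda)$ and $div(X)$ both vanish there and $h=0$ too. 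Since the domain equals $\overline{\Omega_0}\cup\interior(\si(X))$, we get $h\equiv 0$.

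Finally, because $\lambda>0$ we have $X(\ln\lambda)=X(\lambda)/\lambda$, so $h=\lambda\bigl(2\,div(X)+X(\ln\lambda)\bigr)$ and $h\equiv 0$ is equivalent to equation (\ref{div}). Chaining the three equivalences proves that $X$ is stationary with respect to $g$ precisely when $X$ solves (\ref{div}). The only genuinely delicate point is recovering $h\equiv 0$ from the vanishing of $h\cdot g^{st}(Y,Y/X)$ for all $Y$ at the zeros of $X$; this is exactly what the decomposition $\Omega=\overline{\Omega_0}\cup\interior(\si(X))$ plus continuity of $h$ takes care of, and everything else is routine bookkeeping with the Leibniz rule and Lemma \ref{s2}.
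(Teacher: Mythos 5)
Your proposal is correct and follows essentially the same route as the paper: reduce to Remark \ref{rk2}, use the Leibniz rule and Lemma \ref{s2} to arrive at $\bigl(2\,div(X)+X(\ln\lambda)\bigr)g^{st}(Y,Y/X)=0$ for all $Y$, and then remove the factor $g^{st}(Y,Y/X)$ by splitting the domain into the closure of $\{X\neq 0\}$ and the interior of the zero set. Your treatment of that last density/continuity step is just a more explicit version of what the paper does, so there is nothing to add.
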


\begin{proof}
We know from Remark \ref{rk2} that $X\in\mathcal{X}$ is stationary with respect to a $2$-Riemannian metric $g$ if and
only if
$$
Xg(Y,Y/X)=2g([X,Y],Y/X),
\,\,\,\,\,\,\forall Y\in \mathcal{X}.
$$
In the specific case $g=\lambda\cdot g^{st}$ this last expression is equivalent to
$$
\frac{X(\lambda)}{\lambda}g^{st}(Y,Y/X)+Xg^{st}(Y,Y/X)=2g^{st}([X,Y],Y/X),
\,\,\,\,\,\forall Y\in \mathcal{X}.
$$
Applying Lemma \ref{s2} we get the equivalent equality
$$
(2div(X)+X(\ln\lambda))g^{st}(Y,Y/X)=0,
\,\,\,\,\,\,\forall Y\in \mathcal{X}.
$$
Obviously if $X$ is a solution of (\ref{div}) then $X$ is stationary by the above equality.
Conversely if $X$ is stationary, then $X$ satisfies the above equality
hence
$2div(X)+X(\ln\lambda)=0$ outside the set of zeroes of $X$.
But $2div(X)+X(\ln\lambda)=0$ in the interior of the set of zeroes of $X$ too.
Therefore, $2div(X)+X(\ln\lambda)=0$ everywhere hence $X$ solves (\ref{div}).
This proves the result.
\end{proof}

\vspace{10pt}

Applying this theorem to the constant map $\lambda=1$
we immediately obtain the following corollary.

\begin{clly}
 \label{c1}
The stationary vector fields with respect to $g^{st}$ 
in $\mathbb{R}^2$ are the divergence free ones.
\end{clly}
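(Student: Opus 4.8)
The plan is to obtain Corollary \ref{c1} as the immediate specialization of Theorem \ref{thCC} to the constant conformal factor. First I would note that $g^{st}$ is trivially conformally equivalent to itself, namely $g^{st}=\lambda\cdot g^{st}$ with $\lambda\equiv 1$, so Theorem \ref{thCC} applies and tells us that $X\in\mathcal{X}(\mathbb{R}^2)$ is stationary with respect to $g^{st}$ if and only if $X$ solves (\ref{div}), i.e.
$$
2\,div(X)+X(\ln(\lambda))=0.
$$
Then I would simply substitute $\lambda\equiv 1$: since $\ln(1)=0$, the function $\ln(\lambda)$ is identically zero, hence $X(\ln(\lambda))=0$ for every $X$, and (\ref{div}) collapses to $2\,div(X)=0$, which is equivalent to $div(X)=0$. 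This identifies the stationary vector fields with the divergence free ones and finishes the proof.

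As an alternative route that avoids invoking the full Theorem \ref{thCC}, I could argue directly from Lemma \ref{s2} together with Remark \ref{rk2}: the latter says $X$ is stationary iff $Xg^{st}(Y,Y/X)=2g^{st}([X,Y],Y/X)$ for all $Y$, and Lemma \ref{s2} rewrites the left-minus-right side as $2\,div(X)\cdot g^{st}(Y,Y/X)$, so stationarity is equivalent to $div(X)\cdot g^{st}(Y,Y/X)=0$ for all $Y$. Choosing at each point $p\notin Sing(X)$ a $Y$ with $Y(p)$ not parallel to $X(p)$ makes $g^{st}(Y,Y/X)(p)\neq 0$ by Definition \ref{2-inner}-(1), forcing $div(X)$ to vanish off $Sing(X)$; since $div(X)$ also vanishes on the interior of $Sing(X)$, it vanishes identically. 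Either way the conclusion is the same.

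There is essentially no obstacle here: all the substantive work — the computation of $g^{st}(Y,Y/X)$ via Lemma \ref{sl-dgw}-(2), the appearance of $div(X)$, and the passage from "stationary" to the PDE — has already been carried out in Lemma \ref{s2} and Theorem \ref{thCC}. The only minor point worth stating explicitly is the continuity/closure argument showing that an identity valid off $Sing(X)$ and on $\interior(Sing(X))$ holds everywhere, but this is already handled inside the proof of Theorem \ref{thCC}, so for the corollary I would keep the argument to the one-line specialization above.
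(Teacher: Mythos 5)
Your proof is correct and is exactly the paper's argument: the corollary is obtained by specializing Theorem \ref{thCC} to the constant conformal factor $\lambda\equiv 1$, under which (\ref{div}) reduces to $div(X)=0$. The alternative route via Lemma \ref{s2} you sketch is also sound but unnecessary here.
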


One more consequence is the existence of stationary vector
fields for certain $2$-Riemannian metrics in $\mathbb{R}^2$.

\begin{clly}
Every $2$-Riemannian metric of $\mathbb{R}^2$ conformally equivalent
to $g^{st}$ has stationary vector fields.
\end{clly}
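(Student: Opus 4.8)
The plan is to reduce everything to Theorem \ref{thCC}. Writing $g=\lambda\cdot g^{st}$ on the open set $\Omega\subseteq\mathbb{R}^2$ where $g$ is defined, that theorem says the stationary vector fields of $g$ are exactly the solutions $X\in\mathcal{X}$ of the equation (\ref{div}), i.e. $2\,div(X)+X(\ln\lambda)=0$. So the corollary amounts to the elementary assertion that this first-order linear PDE admits a nowhere-vanishing smooth solution on $\Omega$.

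The key observation I would use is that (\ref{div}) is, up to the positive factor $\sqrt{\lambda}$, a pure divergence equation. A one-line product-rule computation gives
\[
div(\sqrt{\lambda}\,X)=\sqrt{\lambda}\,div(X)+X(\sqrt{\lambda})=\frac{\sqrt{\lambda}}{2}\bigl(2\,div(X)+X(\ln\lambda)\bigr),
\]
so that, since $\sqrt{\lambda}>0$, a field $X$ solves (\ref{div}) if and only if the rescaled field $\sqrt{\lambda}\,X$ is divergence free. Thus the stationary vector fields of $g$ are precisely the fields $X=\lambda^{-1/2}Y$ with $Y$ divergence free on $\Omega$; in particular this recovers Corollary \ref{c1} as the case $\lambda\equiv 1$.

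To finish, I would just exhibit one such field. Since $\lambda$ is smooth and strictly positive, $\lambda^{-1/2}\in C^\infty(\Omega)$ is well defined and positive, and the constant field $\partial/\partial x$ is smooth, nowhere zero and divergence free. Hence $X:=\lambda^{-1/2}\,\frac{\partial}{\partial x}$ is a nowhere-vanishing smooth vector field with $\sqrt{\lambda}\,X=\partial/\partial x$, which has zero divergence; by the equivalence above $X$ solves (\ref{div}), and therefore $X$ is a stationary vector field with respect to $g$ by Theorem \ref{thCC}. This proves the corollary.

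There is essentially no obstacle here: the only real content is spotting the substitution $X\mapsto\sqrt{\lambda}\,X$ that turns (\ref{div}) into $div(\sqrt{\lambda}\,X)=0$, after which existence is immediate because $\mathbb{R}^2$ (and any open subset of it) carries obvious nonzero divergence-free fields. One could alternatively verify directly that $2\,div\bigl(\lambda^{-1/2}\tfrac{\partial}{\partial x}\bigr)+\lambda^{-1/2}\tfrac{\partial}{\partial x}(\ln\lambda)=0$ without introducing the reformulation, but phrasing it through the divergence identity makes the whole solution set transparent and ties the statement cleanly to Corollary \ref{c1}.
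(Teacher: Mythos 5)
Your proposal is correct and follows the same route as the paper: the paper's proof is the one-line remark that the corollary ``follows from the fact that (\ref{div}) has a solution for all $\lambda$,'' i.e.\ exactly your reduction to Theorem \ref{thCC}. You go further by actually justifying solvability — the identity $div(\sqrt{\lambda}\,X)=\tfrac{\sqrt{\lambda}}{2}\bigl(2\,div(X)+X(\ln\lambda)\bigr)$ and the explicit solution $X=\lambda^{-1/2}\,\partial/\partial x$ are correct and supply the step the paper leaves unproved.
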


\begin{proof}
This follows from the fact that (\ref{div}) has a solution for all
$\lambda$.
\end{proof}


\begin{thebibliography}{HH}

\bibitem[1]{a1} 
Abe, N., 
{\em Geometry of certain first order differential operators and its applications to general connections},
Kodai Math. J.  11  (1988),  no. 2, 205--223.


\bibitem[2]{a2} 
Abe, N., 
{\em General connections on vector bundles},
Kodai Math. J.  8  (1985),  no. 3, 322--329.


\bibitem[3]{ca} 
Cartan, E., 
{\em Les espaces m\'etriques fond\'es sur la notion d'aire},
Actualit\'es scientifiques et industrielles 72. Paris, Hermann et Cie., 1933.


\bibitem[4]{clk} 
Cho, Y., Lin, P., C., S., Kim, S., 
{\em Theory of 2-inner product spaces},
Nova Science Publishers, Inc., Huntington, NY, 2001.


\bibitem[5]{c} 
Ciarlet, Ph., 
{\em Mathematical elasticity, Vol. I. Three-dimensional elasticity},
Studies in Mathematics and its Applications, 20. North-Holland Publishing Co., Amsterdam, 1988.


\bibitem[6]{dcm} 
do Carmo, M., 
{\em Riemannian geometry},
Translated from the second Portuguese edition by Francis Flaherty, Mathematics: Theory \& Applications, Birkhauser Boston, Inc., Boston, MA, 1992.


\bibitem[7]{dgw} 
Diminnie, Ch., R., Gaehler, S., White, A.,
{\em $2$-inner product spaces},
Collection of articles dedicated to Stanislaw Golpolhkab on his 70th birthday, II.
Demonstratio Math. 6 (1973), 525--536.



\bibitem[8]{dgw2} 
Diminnie, Ch., R., Gaehler, S., White, A.,
{\em Remarks on generalizations of $2$-inner products},
Math. Nacht. 74 (1976), 269--278. 



\bibitem[9]{f} 
Finsler, P., 
{\em Uber Kurven und Flachen in allgemeinen Raume},
Dissertation, Gottingen, 1918, published by Verlag Birkhauser Basel, 1951.






\bibitem[10]{g} 
Gaehler, S., 
{\em Lineare $2$-normierte Raume},
Math. Nachr. 28 (1964), 1--43.



\bibitem[11]{im} 
Iwaniec, T., Martin, G., 
{\em Geometric function theory and non-linear analysis},
Oxford Mathematical Monographs, The Clarendon Press, Oxford University Press, New York, 2001. 



\bibitem[12]{ka} 
Kawaguchi, A., 
{\em On areal spaces, I. Metric tensors in $n$-dimensional spaces based on the notion of two-dimensional area},
Tensor N.S.  1 (1950), 14--45.


\bibitem[13]{k2} 
Kawaguchi, A., 
{\em On the relation between multimetric spaces of Gaehler and areal spaces},
Southeast Asian Bull. Math.  3  (1979), no. 2, 162--169.


\bibitem[14]{kn} 
Knops, R., J., 
{\em On Bateman's exercise},
Internat. J. Non-Linear Mech.  40  (2005),  no. 2-3, 423--428.




\bibitem[15]{k} 
Kupeli, D.,
{\em Singular semi-Riemannian geometry} (With the collaboration of Eduardo Garc\'{\i}a-R\'{\i}o
on Part III),
Mathematics and its Applications, 366. Kluwer Academic Publishers Group, Dordrecht, 1996.

\bibitem[16]{lee} 
Lee, J., M.,
{\em Riemannian manifolds. An introduction to curvature},
Graduate Texts in Mathematics, 176. Springer-Verlag, New York, 1997.



\bibitem[17]{m} 
Miron, R.,
{\em The geometry of Hamilton spaces---an introduction},
Period. Math. Hungar.  48  (2004),  no. 1-2, 77--82.


\bibitem[18]{m2} 
Miron, R.,
{\em Sur la g\'eom\'etrie des espaces d'Hamilton},
(French) [The geometry of Hamiltonian spaces] 
C. R. Acad. Sci. Paris S\'er. I Math.  306  (1988),  no. 4, 195--198.


\bibitem[19]{mi} 
Misiak, A.,
{\em $n$-inner product spaces},
Math. Nachr.  140  (1989), 299--319.



\bibitem[20]{psw} 
Punzi, R., Schuller, F., P., Wohlfarth, M., N., R.,
{\em Area metric gravity and accelerating cosmology},
J. High Energy Phys.  (2007), no. 2, 030, 1-43.



\bibitem[21]{s} 
Spivak, M.,
{\em A comprehensive introduction to differential geometry. Vol. I},
Second edition. Publish or Perish, Inc., Wilmington, Del., 1979.








 







 
 

\end{thebibliography}
\end{document}